\newtheorem{thm}{Theorem}[section]
\newtheorem{lem}[thm]{Lemma}
\theoremstyle{definition}
\newtheorem{defn}[thm]{Definition}
\newtheorem{remk}[thm]{Remark}
\newcommand{\N}{\mathbb N}
\newcommand{\M}{\mathbb M}
\newcommand{\n}{\mathbf n}
\newcommand{\B}{\mathbf B}
\newcommand{\R}{\mathbb R}
\newcommand{\dist}{\operatorname{dist}}
\newcommand{\X}{\mathcal X}
\newcommand{\Y}{\mathcal Y}
\newcommand{\G}{\mathcal G}
\newcommand{\tr}{\operatorname{tr}}
\newcommand{\dv}{\operatorname{div}}
\newcommand{\wcon}{\rightharpoonup}
\numberwithin{equation}{section}
\begin{document}



       \author{Seonghak Kim}
       \address{Institute for Mathematical Sciences\\ Renmin University of China \\  Beijing 100872, PRC}
       \email{kimseo14@ruc.edu.cn}


       \author{Baisheng Yan}

       \address{Department of Mathematics\\ Michigan State University\\ East Lansing, MI 48824, USA}


       \email{yan@math.msu.edu}


       \title[forward-backward parabolic equations]{On Lipschitz solutions for some  forward-backward parabolic  equations. II:\\ The case against Fourier}

\subjclass[2010]{35M13, 35K20, 35D30, 49K20}
\keywords{forward-backward parabolic equations, partial differential inclusions, convex integration, Baire's category method, infinitely many Lipschitz solutions}

\begin{abstract}
As a sequel to the paper \cite{KY2}, we study the existence and properties of Lipschitz solutions to the initial-boundary value problem  of some forward-backward parabolic equations  with diffusion fluxes violating Fourier's inequality.
\end{abstract}
\maketitle

\section{Introduction}
In this paper, following the authors' recent result \cite{KY2}, we further investigate the initial-boundary value problem
\begin{equation}\label{ib-P}
\begin{cases} u_t =\dv (A(Du) )& \mbox{in $\Omega_T$,} \\
A(Du)\cdot \n =0 & \mbox{on $\partial \Omega\times (0,T)$,}\\
u =u_0 & \mbox{on $\Omega\times \{t=0\},$}
\end{cases}
\end{equation}
where $\Omega\subset\R^n\;(n\ge 1)$ is a bounded domain, $T>0$ is any fixed number, $\Omega_T=\Omega\times (0,T)$, $\n$ is the outer unit normal on $\partial\Omega$, $u_0=u_0(x)$ is a given initial datum, and
$A=A(p):\R^n\to\R^n$ is the \emph{diffusion flux} (i.e., the negative of the heat flux density) of the evolution process. Here, $u=u(x,t)$ is the density of some quantity at position  $x$ and time $t$, with  $Du=(u_{x_1},\cdots,u_{x_n})$ and $u_t$ denoting its spatial gradient and    rate of change, respectively.

In general,  a  function  $u\in W^{1,\infty}(\Omega_T)$ is called a \emph{Lipschitz solution} to problem   (\ref{ib-P})   provided that equality
\begin{equation*}
 \int_\Omega  (u(x,s)\zeta(x,s)-u_0(x) \zeta(x,0) )dx =
 \int_0^s \int_\Omega (u \zeta_t -A(Du)\cdot D\zeta ) dxdt
\end{equation*}
holds for  each $\zeta\in C^\infty(\bar\Omega_T)$ and each $s\in [0,T].$ Let $\zeta\equiv 1$; then it is immediate  from the definition that any Lipschitz solution $u$ to (\ref{ib-P}) conserves the total quantity over time:
\[
\int_\Omega  u(x,t)dx = \int_\Omega  u_0(x)dx \quad\forall t\in[0,T].
\]

The usual evolution of heat equation corresponds to the case of \emph{isotropic} diffusion given by Fourier's law: $A(p)=kp \; (p\in\R^n)$, where $k>0$ is the thermal diffusivity.
More generally, for standard diffusions, the diffusion flux $A(p)$ is assumed to be {\em monotone}; namely,
\begin{equation}\label{mono-0}
(A(p)-A(q))\cdot (p-q)\ge 0 \quad (p,\, q\in\R^n).
\end{equation}
In this case, problem (\ref{ib-P}) is parabolic and can be studied by the standard methods of parabolic equations, monotone operators and non-linear semigroup theory \cite{LSU, Br, Ln}. In particular,  when  $A(p)$ is given by a smooth convex  function $W(p)$ through
$A(p)=D_p W(p) \;  (p\in\R^n),$
the diffusion equation in (\ref{ib-P}) can be viewed and thus studied as a certain gradient flow generated by the energy functional
\[
I(u)=\int_\Omega W(Du(x))\,dx.
\]

On the other hand, for certain  applications of the evolution process  to some important physical problems, the underlying diffusion flux $A(p)$ may  be \emph{non-monotone}, yielding   non-parabolic problem (\ref{ib-P}).
In this regard, our recent paper \cite{KY2} studied the existence and properties of Lipschitz solutions to  (\ref{ib-P}) for some   non-monotone diffusion fluxes $A(p)$ of the form
\begin{equation}\label{fun-A}
A(p)=f(|p|^2)p\quad (p\in\R^n),
\end{equation}
given by a function $f\colon [0,\infty)\to \R$ with \emph{profile} $\sigma(s)=s f(s^2)$ having  one of the graphs  in Figures \ref{fig1} and  \ref{fig2}, referred to as the \emph{Perona-Malik type}  in image processing and the \emph{H\"ollig type} related to the phase transitions in thermodynamics, respectively; for more details, see \cite{KY2} and the references therein.

\begin{figure}[ht]
\begin{center}
\begin{tikzpicture}[scale = 1]
    \draw[->] (-.5,0) -- (11.3,0);
	\draw[->] (0,-.5) -- (0,5.2);
 \draw[dashed] (0,4.1)--(5,4.1);
    \draw[dashed] (5, 0)  --  (5, 4.1) ;
	\draw[thick]   (0, 0) .. controls  (2, 3.9) and (4.1,4) ..(5,4.1);
	\draw[thick]   (5, 4.1) .. controls  (7.2, 4) and (9,1) ..(11.1,0.5 );
	\draw (11.3,0) node[below] {$s$};
    \draw (-0.3,0) node[below] {{$0$}};
    \draw (10, 1.2) node[above] {$\sigma(s)$};
    \draw (0, 4.1) node[left] {$\sigma(s_0)$};
   \draw (5, 0) node[below] {$s_0$};
    \end{tikzpicture}
\end{center}
\caption{Perona-Malik type profiles   $\sigma(s)$.}
\label{fig1}
\end{figure}
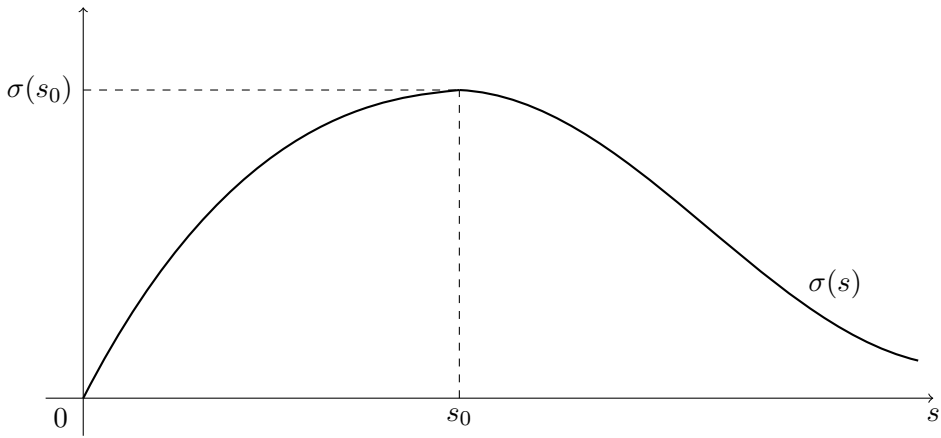

\begin{figure}[ht]
\begin{center}
\begin{tikzpicture}[scale =1]
    \draw[->] (-.5,0) -- (11.3,0);
	\draw[->] (0,-.5) -- (0,5.2);
 \draw[dashed] (0,1.7)--(5.6,1.7);
 \draw[dashed] (0,3.3)--(9.1,3.3);
 \draw[dashed] (9,0)--(9,3.3);
    \draw[dashed] (5.6, 0)  --  (5.6, 1.7) ;
   \draw[dashed] (0.85, 0)  --  (0.85, 1.7) ;
   \draw[dashed] (2.8, 0)  --  (2.8, 3.3) ;
	\draw[thick]   (0, 0) .. controls (2,5) and  (4, 3)   ..(5,2);
	\draw[thick]   (5, 2) .. controls  (6, 1) and (9,3) ..(11.1, 5 );
	\draw (11.3,0) node[below] {$s$};
    \draw (-0.3,0) node[below] {{$0$}};
    \draw (10.5, 3.5) node[above] {$\sigma(s)$};
 \draw (0, 3.3) node[left] {$\sigma(s_1)$};
 \draw (0, 1.7) node[left] {$\sigma(s_2)$};
   \draw (5.6, 0) node[below] {$s_2$};
   \draw (2.8, 0) node[below] {$s_1$};
   \draw (0.85, 0) node[below] {$s_1^*$};
   \draw (9, 0) node[below] {$s_2^*$};
    \end{tikzpicture}
\end{center}
\caption{H\"ollig  type  profiles  $\sigma(s).$}
\label{fig2}
\end{figure}
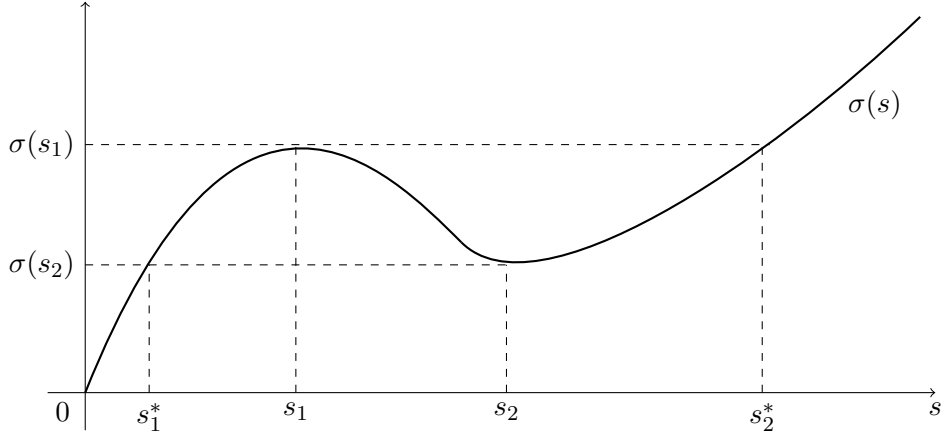

Parabolic and non-parabolic problems (\ref{ib-P}) discussed so far have used  diffusion fluxes $A(p)$ fulfilling \emph{Fourier's inequality}:
\begin{equation}\label{pos}
A(p)\cdot p\ge 0\quad (p\in\R^n),
\end{equation}
which is consistent to the Clausius-Duhem inequality in the second law of thermodynamics; see, e.g. \cite{Dy, Tr}. Observe that the inequality of monotonicity (\ref{mono-0}) implies Fourier's inequality (\ref{pos}), but the converse does not hold.

In this paper, we consider diffusion fluxes $A(p)$ of the form (\ref{fun-A}) with profiles $\sigma(s)$  having the graphs as in Figure \ref{fig3}; in this case, Fourier's inequality (\ref{pos}) is violated as
\[\left\{
\begin{array}{ll}
  A(p)\cdot p>0,  & |p|>s_0, \\
  A(p)\cdot p<0,  & 0<|p|<s_0, \\
  A(p)\cdot p=0,  & |p|\in\{0,\,s_0\}.
\end{array}\right.
\]
We will call such profiles $\sigma(s)$ as  \emph{non-Fourier type}.
More precisely, we impose the following conditions on the non-Fourier type profiles $\sigma(s)=sf(s^2)$.\\

\textbf{Hypothesis (NF):} (See Figure \ref{fig3}.)
\begin{itemize}
\item[(i)]  There exist two numbers $s_0>s_->0$ such that
\[
f\in  C^0([0,\infty))\cap C^{1}((0,s^2_-)\cup(s^2_-,s^2_0))\cap C^{1+\alpha}(s^2_0,\infty),
\]
where $0<\alpha<1$ is any fixed number.
\item[(ii)]
$\sigma(s_0)=0,\;\;\sigma'(s)<0\;\;\forall s\in(0,s_-),\; \sigma'(s)>0\;\;\forall s\in(s_-,s_0)\cup(s_0,\infty)$, and $\lambda\le \sigma'(s)\le \Lambda \;\;\forall s>2s_0$, where $\Lambda\ge\lambda>0$ are constants.
\item[(iii)] Let $s_+\in(s_0,\infty)$ denote the unique number with $\sigma(s_+)=-\sigma(s_-)$.
\end{itemize}
Also, for  each $r\in(0,\sigma(s_+))$, let $s_+(r)\in(s_0,s_+)$, $s_-^1(r)\in(0,s_-)$ and $s_-^2(r)\in(s_-,s_0)$ denote the unique numbers such that
\[
r=\sigma(s_+(r))=-\sigma(s_-^1(r))=-\sigma(s_-^2(r)).
\]

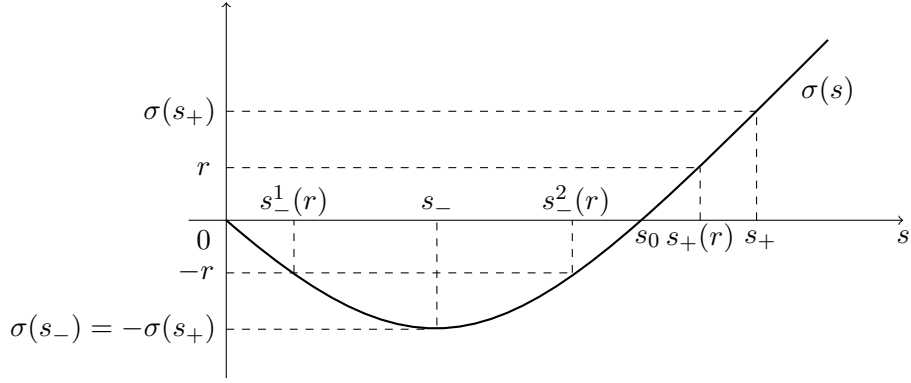
\begin{figure}[ht]
\begin{center}
\begin{tikzpicture}[scale =1]
	\draw[->] (0,0.5) -- (0,5.5);
    \draw[->] (-.5,2.6) -- (9,2.6);
 \draw[dashed] (0,1.15)--(2.7,1.15);
 \draw[dashed] (0,4.05)--(7.05,4.05);
  \draw[dashed] (0,3.3)--(6.3,3.3);
  \draw[dashed] (0,1.9)--(4.65,1.9);
    \draw[dashed] (7.05, 2.6)  --  (7.05, 4.05) ;
    \draw[dashed] (6.3, 2.6)  --  (6.3, 3.3) ;
   \draw[dashed] (2.8, 1.2)  --  (2.8, 2.6) ;
   \draw[dashed] (0.9, 1.9)  --  (0.9, 2.6) ;
   \draw[dashed] (4.6, 1.9)  --  (4.6, 2.6) ;
	\draw[thick]   (0, 2.6) .. controls (3,0) and  (4, 1)   ..(8,5);
	\draw (9,2.6) node[below] {$s$};
    \draw (-0.3,2.6) node[below] {{$0$}};
    \draw (-0.3,3.5) node[below] {$r$};
    \draw (-0.4,2.18) node[below] {$-r$};
    \draw (8, 4) node[above] {$\sigma(s)$};
 \draw (0, 4.05) node[left] {$\sigma(s_+)$};
 \draw (0, 1.15) node[left] {$\sigma(s_-)=-\sigma(s_+)$};
   \draw (5.6, 2.6) node[below] {$s_0$};
   \draw (0.9, 3.2) node[below] {$s_-^1(r)$};
   \draw (4.65, 3.2) node[below] {$s_-^2(r)$};
   \draw (2.8, 3.05) node[below] {$s_-$};
   \draw (6.3, 2.7) node[below] {$s_+(r)$};
   \draw (7.1, 2.6) node[below] {$s_+$};
    \end{tikzpicture}
\end{center}
\caption{Non-Fourier type profiles $\sigma(s)$.}
\label{fig3}
\end{figure}

The main purpose of this paper is to explore the scope of the methods of \cite{KY1, KY2} in the application to   problem (\ref{ib-P})   in all dimensions for  diffusion profiles $\sigma(s)$ of the non-Fourier type. To state our main theorem, we  make the following assumptions on the domain $\Omega$ and initial datum $u_0$:
\begin{equation}\label{assume-1}
\begin{cases} \mbox{$\Omega\subset\R^n$ is a bounded    domain  with $\partial \Omega$ of class $C^{2+\alpha}$,}\\
\mbox{$u_0\in C^{2+\alpha}(\bar\Omega)$ is non-constant with $Du_0\cdot \n |_{\partial \Omega}=0.$} \end{cases}
\end{equation}
In addition to this, we further assume without loss of generality that the initial datum $u_0$ satisfies
\begin{equation}\label{av-0}
\int_\Omega u_0(x)\,dx=0,
\end{equation}
since otherwise we may solve problem (\ref{ib-P}) with initial datum $\tilde u_0=u_0-\bar u_0,$ where $\bar u_0=\frac{1}{|\Omega|}\int_\Omega u_0\,dx$.

We now state the main existence theorem as follows.

\begin{thm}\label{thm:NF-1} Let $m_0=\min_{\bar \Omega}|Du_0|$ and $m_0'=\max\{m_0,s_0\}$, and assume $|Du_0(x_0)|\in (0,s_+)$ at some $x_0\in\Omega$. Then for each $\tilde r\in(\sigma(m_0'),\sigma(s_+))$, there exist an open set $\,\Omega_T^{\tilde r}\subset\Omega_T$  and infinitely many Lipschitz solutions $u$ to $(\ref{ib-P})$ of the following two types:

\emph{(\textbf{Type I})} $|Du|\in[s_-^2(\tilde r),s_+(\tilde r)]\cup\{0\}$ a.e. in $\Omega_T\setminus\Omega_T^{\tilde r}$.

\emph{(\textbf{Type II})} $|Du|\in[0,s_-^1(\tilde r)]\cup[s_0,s_+(\tilde r)]$ a.e. in $\Omega_T\setminus\Omega_T^{\tilde r}$.

\emph{(\textbf{Common for both types})}
\[
u\in C^{2+\alpha,1+\alpha/2}(\bar{\Omega}^{\tilde r}_T),\quad u_t=\dv(A(Du))\;\; \mbox{pointwise in}\;\;\Omega_T^{\tilde r},
\]
\[
|Du(x,t)|>s_+(\tilde r)\;\;\forall(x,t)\in\Omega_T^{\tilde r}, \quad\mbox{and}\quad \Omega_0^{\tilde r}\subset\partial\Omega_T^{\tilde r},
\]
where
$
\Omega_0^{\tilde r}=\{(x,0) \,|\, x\in\Omega,\,|Du_0(x)|>s_+(\tilde r) \}.
$
\end{thm}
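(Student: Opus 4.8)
The plan is to follow the two-step scheme of \cite{KY1, KY2}: first solve a \emph{regularized} (uniformly parabolic) problem to obtain a smooth subsolution, then perturb it on a suitable open set by convex integration / Baire category to pick up the oscillatory part of $Du$ landing in the prescribed ``bad'' set.

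First I would set up the regularization. Fix $\tilde r\in(\sigma(m_0'),\sigma(s_+))$. The idea is to replace the non-Fourier profile $\sigma$ by a monotone profile $\tilde\sigma_{\tilde r}$ that agrees with $\sigma$ for $s\ge s_+(\tilde r)$ (more precisely, for $s$ large, where $\lambda\le\sigma'\le\Lambda$), is smooth, strictly increasing, and satisfies a uniform ellipticity bound; equivalently one replaces $A$ by a monotone flux $\tilde A(p)=\tilde f(|p|^2)p$ agreeing with $A$ on $\{|p|\ge s_+(\tilde r)\}$. Because $m_0'=\max\{m_0,s_0\}\ge s_0$ and $\sigma$ is increasing on $(s_0,\infty)$ with $\sigma(m_0')<\tilde r<\sigma(s_+)$, one has $m_0'<s_+(\tilde r)<s_+$, and also $|Du_0|\ge m_0$; one checks that $|Du_0|$ lies in the region where a careful choice of $\tilde\sigma_{\tilde r}$ can be arranged so that $u_0$ is an admissible initial datum, with $\tilde\sigma_{\tilde r}(|Du_0|) \le \sigma(|Du_0|)$ where it matters. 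Then solve
\begin{equation*}
v_t=\dv(\tilde A(Dv))\ \text{in }\Omega_T,\quad \tilde A(Dv)\cdot\n=0\ \text{on }\partial\Omega\times(0,T),\quad v=u_0\ \text{on }\Omega\times\{0\}
\end{equation*}
by standard quasilinear parabolic theory (Ladyzhenskaya--Solonnikov--Ural'tseva), obtaining $v\in C^{2+\alpha,1+\alpha/2}(\bar\Omega_T)$. Define $\Omega_T^{\tilde r}=\{(x,t)\in\Omega_T : |Dv(x,t)|>s_+(\tilde r)\}$; this is open, contains $\Omega_0^{\tilde r}$ in its closure by continuity of $Dv$ up to $t=0$ and the fact that $v(\cdot,0)=u_0$, and on it $\tilde A=A$ so $v$ already solves the original PDE pointwise there. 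Set $u=v$ on $\bar\Omega_T^{\tilde r}$; this is the ``common'' part of the conclusion.

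Next, on the complement, I would run the differential-inclusion machinery. On $\Omega_T\setminus\Omega_T^{\tilde r}$ we have $|Dv|\le s_+(\tilde r)$, and I want to perturb $v$ to a genuine solution $u$ of $u_t=\dv(A(Du))$ with $|Du|$ forced into $[s_-^2(\tilde r),s_+(\tilde r)]\cup\{0\}$ (Type I) or $[0,s_-^1(\tilde r)]\cup[s_0,s_+(\tilde r)]$ (Type II). Following \cite{KY1, KY2}, introduce the auxiliary vector field: writing the equation in divergence form, seek $u$ and an auxiliary field $\mathbf w$ with $(Du,u_t)=(\text{something},\dv\mathbf w)$ and $\mathbf w=A(Du)$ pointwise a.e.\ off $\Omega_T^{\tilde r}$; this recasts the problem as a partial differential inclusion $(Du,u_t,\mathbf w)\in\mathcal K$ a.e., where $\mathcal K$ is built from the graph of $A$ restricted to the relevant $|p|$-ranges. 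The two choices of target set — the two ``stable branches'' one keeps together with the preserved branch $\{|p|=0\}$ or $\{|p|\in[s_0,s_+(\tilde r)]\}$ — give Type I versus Type II. One must verify the key structural facts from \cite{KY1, KY2}: that $v$ (with the accompanying field $A(Dv)$, which is continuous) is a \emph{strict subsolution} relative to $\mathcal K$ on the open set $\Omega_T\setminus\bar\Omega_T^{\tilde r}$, i.e.\ its data lie in the interior of an appropriate laminate/rank-one-convex hull $U$ with $U\cap\mathcal K$ having the relevant rank-one connections; then the abstract existence theorem (Baire category in the space of subsolutions, or an iterated convex-integration lemma) yields infinitely many solutions $u$ agreeing with $v$ on $\bar\Omega_T^{\tilde r}$ and on the parabolic boundary, hence satisfying the initial and Neumann conditions inherited from $v$, and satisfying the inclusion — hence the PDE and the gradient constraint — a.e.\ on $\Omega_T\setminus\Omega_T^{\tilde r}$. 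Infinitude follows because the residual set of solutions is dense $G_\delta$, in particular uncountable.

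The main obstacle I expect is the \emph{compatibility of the regularization with the convex-integration hull}: one needs the single monotone flux $\tilde A$ (hence the single open set $\Omega_T^{\tilde r}$) to simultaneously (a) produce a subsolution $v$ whose data, off $\Omega_T^{\tilde r}$, lie strictly inside the rank-one hull of the part of the graph of $A$ over the chosen branches, and (b) do so for \emph{both} target configurations in Type I and Type II with the \emph{same} $\Omega_T^{\tilde r}$. This forces delicate quantitative choices: the cutoff level $s_+(\tilde r)$ must exceed $s_0$ and $m_0$ yet stay below $s_+$, the modified profile must be flattened/raised on $(0,s_+(\tilde r))$ so that $\tilde\sigma_{\tilde r}(|Dv|)$ stays in the open interval $(-\tilde r,\tilde r)$ where the three pre-images $s_-^1(\tilde r)<s_-<s_-^2(\tilde r)<s_0<s_+(\tilde r)$ of level $\pm\tilde r$ control the geometry, and one must check that $v$ indeed satisfies this strict inequality off $\Omega_T^{\tilde r}$ (not merely $\le s_+(\tilde r)$, but that the scalar $\tilde A(Dv)\cdot Dv/|Dv|^2$-type quantity is interior). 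The boundary behavior near $\partial\Omega_T^{\tilde r}$ — matching the smooth inner solution to the wild outer one across a free interface while preserving the no-flux condition on $\partial\Omega\times(0,T)$ — is the other technical point, handled as in \cite{KY2} by doing convex integration on compactly-contained subsets exhausting $\Omega_T\setminus\bar\Omega_T^{\tilde r}$ and using that $v$ satisfies the homogeneous Neumann data exactly.
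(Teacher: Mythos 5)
Your overall architecture is the right one and matches the paper: modify the profile to get a uniformly parabolic flux $\tilde A$ agreeing with $A$ for $|p|\ge s_+(\tilde r)$, solve the regularized initial--Neumann problem to get $u^*\in C^{2+\alpha,1+\alpha/2}(\bar\Omega_T)$, set $\Omega_T^{\tilde r}=\{|Du^*|>s_+(\tilde r)\}$, accept $u^*$ there (where $\tilde A=A$), and perturb $u^*$ on the complement by a Baire-category/convex-integration argument built on a rank-one hull of the graph of $A$ over the two chosen branches. (A small slip: the modified profile satisfies $\tilde\sigma>\sigma$ on $(0,s_+(\tilde r))$, not $\tilde\sigma\le\sigma$; this sign is needed so that $(p,\tilde A(p))$ sits in the \emph{interior} of the laminate.)

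The genuine gap is that you ask $u^*$ to be a strict subsolution with respect to a \emph{single} hull $\mathcal K$ over $\Omega_T^1=\{0<|Du^*|<s_+(\tilde r)\}$, as in \cite{KY2}. That fails here. The relevant hull $\mathcal S_{r_1,r_2}$ is only an open set with usable rank-one structure for $r_1>0$, and as the flux level $r=\tilde\sigma(|Du^*|)$ approaches $0$ (which it does, since $\sigma(0)=\sigma(s_0)=0$ is the very non-Fourier feature, and $|Du^*|$ passes near $0$ and $s_0$ at time $t=0$ under the assumption $|Du_0(x_0)|\in(0,s_+)$), the pair of branches $s_-^2(r),s_+(r)$ collapses onto $s_0$ and the laminate degenerates. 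Moreover Theorem~\ref{thm:main2} only produces, for each level $r\in(0,\tilde r)$, a \emph{local} radius $\mu_r'$ of validity; one cannot take $r_1\to0^+$, $r_2\to\tilde r^-$ with a single hull. The paper's way out, and the essential new idea relative to \cite{KY2}, is the \emph{diagonal-covering set}: for each $r\in(0,\tilde r)$ pick $\bar\mu_r\le\mu_r'$, extract a countable sub-cover $\{I_{r_k}\}$ of $(0,\tilde r)$, and set $\mathcal S_{dc}=\bigcup_k\mathcal S_{r_k-\bar\mu_{r_k},r_k+\bar\mu_{r_k}}$. Lemma~\ref{lem-a} then shows $(p,\tilde A(p))\in\mathcal S_{dc}$ for all $0<|p|<s_+(\tilde r)$, so $(Du^*,v^*_t)\in\mathcal S_{dc}\cup\mathcal F$ in $\Omega_T$, and the admissible set (\ref{ad-class-NF}) is built around $\mathcal S_{dc}$. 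The density proof (Theorem~\ref{thm-density-1}) has to track this: it decomposes $\Omega_T^1$ into countably many pieces $U^j_l$ on each of which one and only one local hull $\mathcal S^j_l$ applies, then runs Theorem~\ref{main-lemma} piece by piece with constants depending on $\mathcal S^j_l$. Without this countable patching your plan breaks down at exactly the place you flagged as ``delicate quantitative choices''; the fix is not merely quantitative, it requires a different set-valued target for the inclusion. (The paper itself notes in Section~\ref{sec:remarks} that this degeneracy is absent for $n=1$ but unavoidable for $n\ge2$.) Once that is in place, the rest of your plan -- choosing the $s_-^2$ branch for \textbf{Type I} and the $s_-^1$ branch for \textbf{Type II}, and reading off the final gradient constraints from the a.e.\ limit $(Du,v_t)\in\mathcal B$ on $\Omega_T^1$ together with $|Du^*|\in\{0,s_+(\tilde r)\}$ on $\Omega_T^0\cup\Omega_T^2$ -- is indeed how the proof concludes.
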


Lipschitz solutions $u$ of \textbf{Type I} will be also called as \emph{forward-forward type}   or simply   \textbf{FFT} solutions, and likewise for those $u$ of \textbf{Type II}  as \emph{backward-forward type} or simply \textbf{BFT} solutions. For clear distinction of these two types, we keep using boldface letters for \textbf{Type I}, \textbf{Type II}, \textbf{FFT} and \textbf{BFT} throughout the paper.

As a byproduct of Theorem \ref{thm:NF-1}, we have the simple existence theorem as follows.

\begin{thm}\label{thm:NF-2} Let $\Omega$ be as in \emph{(\ref{assume-1})}.
Then for any initial datum $u_0\in C^{2+\alpha}(\bar\Omega)$ with $Du_0\cdot\n|_{\partial\Omega}=0$, problem  \emph{(\ref{ib-P})} has at least one Lipschitz solution.
\end{thm}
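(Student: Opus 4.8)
The plan is to reduce Theorem~\ref{thm:NF-2} to Theorem~\ref{thm:NF-1} through a short case analysis; the only substantive point is a geometric observation forcing $Du_0$ to vanish somewhere on $\bar\Omega$. If $u_0$ is constant, then $Du_0\equiv 0$, so $A(Du_0)=f(0)\cdot 0=0$ and the constant function $u\equiv u_0$ is a smooth (hence Lipschitz) solution of~(\ref{ib-P}): the weak identity is immediate because both sides reduce to $\int_\Omega u_0\big(\zeta(x,s)-\zeta(x,0)\big)\,dx$.

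Now assume $u_0$ is non-constant. As noted after~(\ref{av-0}), it suffices to solve~(\ref{ib-P}) for the shifted datum $\tilde u_0=u_0-\bar u_0$, because adding the constant $\bar u_0$ to a Lipschitz solution $\tilde u$ of the shifted problem produces a Lipschitz solution of the original one (the gradient, the time derivative, and the boundary flux are unchanged, and only the initial trace is corrected). The shifted datum $\tilde u_0$ is still non-constant, lies in $C^{2+\alpha}(\bar\Omega)$, and satisfies $D\tilde u_0\cdot\n|_{\partial\Omega}=0$ and $\int_\Omega \tilde u_0\,dx=0$; it remains to check the two hypotheses of Theorem~\ref{thm:NF-1}, namely that the admissible interval $(\sigma(m_0'),\sigma(s_+))$ is non-empty and that $|D\tilde u_0(x_0)|\in(0,s_+)$ for some $x_0\in\Omega$ (here $m_0,m_0'$ are formed from $\tilde u_0$, equivalently from $u_0$, since they depend only on the gradient).

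The key step is to show $m_0:=\min_{\bar\Omega}|Du_0|=0$. Since $\partial\Omega$ is a compact $C^{2+\alpha}$ hypersurface without boundary, the restriction $u_0|_{\partial\Omega}$ attains its maximum at some $\bar x\in\partial\Omega$; there the intrinsic (tangential) gradient of $u_0|_{\partial\Omega}$ vanishes, while $\partial_\n u_0(\bar x)=0$ by assumption, so $Du_0(\bar x)=0$ in $\R^n$ and hence $m_0=0$. Consequently $m_0'=\max\{0,s_0\}=s_0$ and $\sigma(m_0')=\sigma(s_0)=0<\sigma(s_+)$, so $(\sigma(m_0'),\sigma(s_+))=(0,\sigma(s_+))$ is non-empty. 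For the point $x_0$: since $u_0$ is non-constant and $\Omega$ is connected, $Du_0\not\equiv 0$ on $\Omega$, so $|Du_0|$ takes some value $M>0$ at an interior point; on the other hand $\inf_\Omega|Du_0|=0$, because there are interior points $x_k\to\bar x$ with $|Du_0(x_k)|\to 0$. As $x\mapsto|Du_0(x)|$ is continuous on the connected set $\Omega$, its image is an interval with infimum $0$ that contains $M$, hence meets $(0,s_+)$; this yields the desired $x_0\in\Omega$ with $|Du_0(x_0)|\in(0,s_+)$.

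With all hypotheses in place, Theorem~\ref{thm:NF-1} applied to $\tilde u_0$ with any fixed $\tilde r\in(0,\sigma(s_+))$ produces infinitely many Lipschitz solutions of the shifted problem — in particular at least one — and shifting back by $\bar u_0$ completes the proof. The step I expect to require the most care (and would write out in full) is the vanishing of $Du_0$ at a boundary critical point of $u_0|_{\partial\Omega}$, which is precisely what makes the hypotheses of Theorem~\ref{thm:NF-1} automatic for non-constant data; the constant case, the mean-zero shift, and the continuity/connectedness argument are routine.
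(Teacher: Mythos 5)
Your proof is correct, and it takes a genuinely different route from the paper's. The paper splits into three cases: (i) $u_0$ constant, (ii) $|Du_0(x_0)|\in(0,s_+)$ for some interior $x_0$, handled by Theorem~\ref{thm:NF-1}, and (iii) $\min_{\bar\Omega}|Du_0|\ge s_+$, which the paper handles by solving the uniformly parabolic problem~(\ref{ib-para}) with the modified flux $\tilde A$ and then either accepting $u^*$ as a solution (if $|Du^*|\ge s_+$ throughout $\Omega_T$) or patching $u^*$ on $[0,\bar t]$ with a Theorem~\ref{thm:NF-1} solution on $[\bar t,T]$ at the first time $\bar t$ at which $|Du^*|$ enters $(0,s_+)$. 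Your key observation is that case (iii) is in fact vacuous under the stated hypotheses: since $\partial\Omega$ is a compact $C^{2+\alpha}$ hypersurface without boundary, $u_0|_{\partial\Omega}$ attains a maximum at some $\bar x\in\partial\Omega$, where the tangential component of $Du_0$ vanishes; combined with the Neumann condition $Du_0(\bar x)\cdot\n(\bar x)=0$, this gives $Du_0(\bar x)=0$ and hence $m_0=\min_{\bar\Omega}|Du_0|=0$. Your connectedness/intermediate-value argument then supplies the interior $x_0$ needed for Theorem~\ref{thm:NF-1} whenever $u_0$ is non-constant, after the routine mean-zero shift. The net effect is a shorter, more elementary proof that bypasses the parabolic-solve-and-patch step entirely, and as a side effect you have shown that the hypotheses of Theorem~\ref{thm:NF-1} are automatically satisfied by every non-constant datum meeting the assumptions of Theorem~\ref{thm:NF-2} --- a fact the paper does not record. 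The paper's case (iii) argument retains independent interest as a technique (it would matter if the Neumann compatibility on $u_0$ were relaxed), but for the statement as given your reduction is cleaner.
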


The rest of the paper is organized as follows. Section \ref{sec:approach} begins with a general density approach to problem (\ref{ib-P}) as a non-homogeneous partial differential inclusion. Then the general existence theorem, Theorem \ref{thm:main},   is formulated under a key density hypothesis. Also, some essential ingredients for the proof of the main theorem, Theorem \ref{thm:NF-1}, are  provided. As the pivotal analysis of the paper, the geometry of related matrix sets is investigated in Section \ref{sec:geometry}, leading to the relaxation result on a homogeneous differential inclusion, Theorem \ref{main-lemma}. Section \ref{sec:add-set} is devoted to the simultaneous construction of suitable boundary functions and admissible sets for \textbf{Types I} and \textbf{II} in the stream of the proof of Theorem \ref{thm:NF-1}. Then the key density hypothesis for each type is realized in Section \ref{sec:den-proof}, completing the proof of Theorem \ref{thm:NF-1}. The proof of Theorem \ref{thm:NF-2} is also included in the last part of this section. Lastly, Section \ref{sec:remarks} adds a remark on further existence results that can be deduced from the combination of \cite{KY2} and this paper.


\section{A general density approach and some useful results }\label{sec:approach}
In this section, we present a general density method and some essential ingredients for the proof of the main result, Theorem \ref{thm:NF-1}. All the proofs and motivational ideas can be found in the previous paper \cite{KY2} and references therein; so we do not repeat those here unless otherwise stated.

\subsection{Admissible set and the density approach}\label{subsec-approach}

For the general density approach to problem (\ref{ib-P}), we assume the following:
\[\left\{
\begin{array}{l}
  \mbox{$\Omega\subset\R^n$ is a bounded Lipschitz domain,} \\
  \mbox{the initial datum $u_0\in W^{1,\infty}(\Omega)$,} \\
  \mbox{the diffusion flux $A\in C(\R^n;\R^n)$.}
\end{array}\right.
\]

Assume that we have a function $\Phi=(u^*,v^*)\in W^{1,\infty}(\Omega_T;\R^{1+n})$  satisfying
\begin{equation}\label{bdry-1}
\begin{cases}
 u^*(x,0)=u_0(x), &x\in \Omega,\\
\dv v^*(x,t)=u^*(x,t), &  \mbox{a.e. $(x,t)\in\Omega_T$},
\\
v^*(\cdot,t) \cdot \n|_{\partial\Omega} =0, &  t\in [0,T],
\end{cases}
\end{equation}
which will be called a \emph{boundary function} for the initial datum $u_0.$
We denote by $W^{1,\infty}_{u^*}(\Omega_T),\, W^{1,\infty}_{v^*}(\Omega_T;\R^n)$ the usual \emph{Dirichlet classes} with boundary traces $u^*, \, v^*,$ respectively.

We say that $\mathcal U\subset W^{1,\infty}_{u^*}(\Omega_T)$ is  an \emph{admissible set}  provided that it is  nonempty and  bounded in $W^{1,\infty}_{u^*}(\Omega_T)$ and that for each $u\in \mathcal U$,  there exists a vector function  $v\in W_{v^*}^{1,\infty}(\Omega_T; \R^n)$ satisfying
\[
\mbox{$\dv v=u$ \, a.e. in $\Omega_T$\quad and\quad$\|v_t\|_{L^\infty(\Omega_T)}\le R$,}
\]
 where $R>0$ is any fixed number. If\; $\mathcal U$ is an admissible set,  for each  $\epsilon>0$,  let $\mathcal U_\epsilon$ be   the set of all  $u\in\mathcal U$ such that  there exists a  function $v\in W_{v^*}^{1,\infty}(\Omega_T; \R^n)$ satisfying
\[
\begin{split}
&\mbox{$\dv v=u$  \, a.e. in $\Omega_T$,\quad$\|v_t\|_{L^\infty(\Omega_T)}\le R$,}\quad \mbox{and}\\
&\quad\int_{\Omega_T} |v_t(x,t)-A(Du(x,t))|\,dxdt \leq\epsilon|\Omega_T|.\end{split}
\]

We now have the following general existence theorem under a pivotal density hypothesis   of\;  $\mathcal U_\epsilon$ in $\mathcal U.$ Although the proof of this theorem already appeared in \cite{KY1, KY2}, we need to reproduce it here since  the proof itself will be used in the proof of Theorem \ref{thm:NF-1}.

\begin{thm}\label{thm:main} Let\, $\mathcal U\subset W^{1,\infty}_{u^*}(\Omega_T)$ be an admissible set satisfying the  {\em density property:}
\begin{equation*}
\mbox{$\mathcal U_\epsilon$  is dense in $\mathcal U$ under the $L^\infty$-norm for each  $\epsilon>0$.}
\end{equation*}
Then,  given any $\varphi\in \mathcal U$, for each $\delta>0,$
there exists a Lipschitz solution $u\in  W_{u^*}^{1,\infty}(\Omega_T)$ to   $(\ref{ib-P})$ satisfying
$\|u-\varphi\|_{L^\infty(\Omega_T)}<\delta.$
Furthermore,  if $\mathcal U$ contains a function  which is not a Lipschitz solution to $(\ref{ib-P}),$ then $ (\ref{ib-P})$ itself admits  infinitely  many Lipschitz solutions.
\end{thm}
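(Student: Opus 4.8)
The plan is to realize problem (\ref{ib-P}) as a differential inclusion in the space of admissible functions and then run the Baire category method. First I would set up the metric space: fix the admissible set $\mathcal U$, equip its $L^\infty$-closure $\overline{\mathcal U}$ (taken in $W^{1,\infty}_{u^*}(\Omega_T)$ weakly, but with the $L^\infty$-metric) with the complete metric induced by $\|\cdot\|_{L^\infty(\Omega_T)}$. Completeness follows because $\mathcal U$ is bounded in $W^{1,\infty}$, so any $L^\infty$-Cauchy sequence has a weak-$*$ limit in $W^{1,\infty}$ with the same Dirichlet datum $u^*$, and the associated potentials $v$ (with $\dv v=u$, $\|v_t\|_{L^\infty}\le R$) likewise converge weakly-$*$ to an admissible potential for the limit; thus $\overline{\mathcal U}$ is itself a bounded subset of $W^{1,\infty}_{u^*}$ and a complete metric space under the $L^\infty$-distance.

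Next I would identify the solution set. A function $u\in\overline{\mathcal U}$ is a Lipschitz solution of (\ref{ib-P}) precisely when it admits a potential $v\in W^{1,\infty}_{v^*}(\Omega_T;\R^n)$ with $\dv v=u$, $\|v_t\|_{L^\infty}\le R$, and $v_t(x,t)=A(Du(x,t))$ a.e.; indeed integrating $v_t=A(Du)$ against test functions and using $\dv v=u$ recovers the weak formulation, with the no-flux boundary condition coming from $v\cdot\n|_{\partial\Omega}=0$ and the initial condition from $u^*(\cdot,0)=u_0$. The key point is that each $\mathcal U_\epsilon$ is (relatively) open in $\overline{\mathcal U}$: if $u\in\mathcal U_\epsilon$ with potential $v$ satisfying the strict-type integral estimate $\int_{\Omega_T}|v_t-A(Du)|\,dxdt<\epsilon|\Omega_T|$, then for $u'$ close to $u$ in $L^\infty$ one perturbs $v$ only slightly (the potential depends continuously on $u$ in the relevant topology because $\dv(v'-v)=u'-u$ can be solved with small $W^{1,\infty}$-norm), and since $A$ is continuous and the gradients are uniformly bounded, the estimate persists. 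Here I should be a little careful: openness is more naturally phrased for a slightly enlarged family, e.g. $\mathcal U_\epsilon^\circ$ defined with strict inequality, and one shows each such set is open and dense; the hypothesis gives density of $\mathcal U_\epsilon$, hence of $\mathcal U_{\epsilon'}^\circ$ for $\epsilon'>\epsilon$, in $\mathcal U$ and therefore in $\overline{\mathcal U}$.

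Then the Baire category argument runs as follows: the set $\mathcal S=\bigcap_{k\ge 1}\mathcal U_{1/k}^\circ$ is a countable intersection of open dense subsets of the complete metric space $\overline{\mathcal U}$, hence dense (in particular nonempty and, being dense in an infinite set when $\overline{\mathcal U}$ has no isolated points, infinite). Any $u\in\mathcal S$ has a potential $v$ with $\int_{\Omega_T}|v_t-A(Du)|\,dxdt\le \frac1k|\Omega_T|$ for all $k$, forcing $v_t=A(Du)$ a.e.; a compactness/diagonal step extracting a weak-$*$ limit of the potentials $v^{(k)}$ (uniformly bounded in $W^{1,\infty}$) produces a single admissible $v$ realizing the equality, so $u$ is a genuine Lipschitz solution. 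Density of $\mathcal S$ in $\overline{\mathcal U}\supset\mathcal U$ gives, for any $\varphi\in\mathcal U$ and $\delta>0$, a solution with $\|u-\varphi\|_{L^\infty}<\delta$. For the final clause: if $\mathcal U$ contains some $w$ that is not a solution, then $w$ has positive distance from the closed set of solutions is false in general — instead one observes that $\overline{\mathcal U}$ then has infinitely many points (it contains $w$ and all the solutions near $w$, which are distinct from $w$), so $\overline{\mathcal U}$ is an infinite complete metric space with no isolated points, whence the dense $G_\delta$ set $\mathcal S$ of solutions is infinite. I expect the main obstacle to be the openness of the sets $\mathcal U_\epsilon^\circ$ in $\overline{\mathcal U}$ under the $L^\infty$-metric — reconciling the weak topology in which the potentials live with the strong $L^\infty$-topology on $u$, and showing the perturbed potential can be chosen uniformly bounded with $\|v_t\|_{L^\infty}\le R$ — together with the verification that $\overline{\mathcal U}$ genuinely has no isolated points in the relevant case; both are handled by the div-equation solvability estimates already available from \cite{KY1, KY2}.
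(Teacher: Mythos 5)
Your overall framework — Baire category on a complete metric space built from $\overline{\mathcal U}$ under the $L^\infty$-metric, a dense $G_\delta$ set of solutions, and the cardinality argument at the end — is the right one, and the identification of Lipschitz solutions with the existence of a potential $v$ satisfying $\dv v=u$, $v_t=A(Du)$ a.e.\ is correct. However, there is a genuine gap at the step you yourself flag as the expected obstacle: the sets $\mathcal U_\epsilon$ (or their strict-inequality variants $\mathcal U_\epsilon^\circ$) are \emph{not} relatively open in $\overline{\mathcal U}$ under the $L^\infty$-metric, and the div-equation solvability estimates cannot repair this. The reason is not the passage from $u$ to the potential $v$ (which, as you note, is controllable via the right inverse of $\dv$), but the passage from $u$ to $Du$: the gradient operator is \emph{discontinuous} from $(L^\infty,\|\cdot\|_{L^\infty})$ into $L^1(\Omega_T;\R^n)$, even on a bounded subset of $W^{1,\infty}$. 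An $L^\infty$-small perturbation $u'-u$ can have $\|D(u'-u)\|_{L^1}$ of order one (take a rapidly oscillating perturbation of small amplitude), so $A(Du')$ need not be $L^1$-close to $A(Du)$, and the strict estimate $\int|v_t-A(Du)|<\epsilon|\Omega_T|$ does not persist. Thus ``each $\mathcal U_\epsilon^\circ$ is open'' is false, and the countable-intersection-of-open-dense-sets argument cannot be run.

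The paper avoids this problem by applying Baire's theorem not to a family of open dense subsets but to the gradient operator itself: one shows that $D:(\X,L^\infty)\to (L^1,\|\cdot\|_{L^1})$ is a \emph{Baire-one map} (a pointwise limit of the continuous mollified maps $T_h(u)=Du^*+D(\rho_h*(u-u^*))$), so by the Baire--Osgood theorem it is continuous on a residual set $\G\subset\X$. For $u\in\G$, one picks $u_j\in\mathcal U_{1/j}$ with $\|u_j-u\|_{L^\infty}<1/j$; continuity of $D$ \emph{at the point $u$} gives $Du_j\to Du$ in $L^1$, and then the weak-$*$ limit of the associated potentials $v_j$ produces the desired $v$ with $v_t=A(Du)$. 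So the continuity you need is available only at residually many points, not on open sets — this is exactly what the Baire-one-map formulation supplies and what your openness claim silently assumes everywhere. Your treatment of the final clause (infinitude of solutions) also differs from the paper's: you appeal to $\overline{\mathcal U}$ being perfect, which you do not verify; the paper's argument is more elementary — if the residual set $\G$ were finite, then $\X=\overline{\G}=\overline{\mathcal U}$ would be finite, forcing $\mathcal U=\G$ and contradicting the existence of a non-solution in $\mathcal U$ — and sidesteps the perfectness issue entirely.
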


\begin{proof}
For clarity, we divide the proof into several steps.

1. Let $\X$ be the closure of\, $\mathcal U$ in the metric space $L^\infty(\Omega_T).$
Then $(\mathcal X,L^\infty)$ is a non-empty complete metric space.  By assumption, each $\mathcal U_\epsilon$ is dense in $\X.$ Moreover, since $\mathcal U$ is bounded in $W_{u^*}^{1,\infty}(\Omega_T)$,  we have  $\X\subset W_{u^*}^{1,\infty}(\Omega_T)$.

2. Let  $\Y =L^1(\Omega_T;\R^{n})$. For $h>0$, define $T_h\colon  \X \to \Y$ as follows. Given any $u\in \X$, write $u=u^* +w$ with $w\in W_0^{1,\infty}(\Omega_T)$ and define
\[
T_h (u) =Du^* + D(\rho_h * w),
\]
where $\rho_h(z)=h^{-N}\rho(z/h)$, with $z=(x,t)$ and $N=n+1$, is the standard $h$-mollifier in $\R^{N}$, and $\rho_h * w$ is the usual convolution in $\R^{N}$ with $w$ extended to be zero outside $\bar{\Omega}_T.$
Then, for each $h>0$, the map $T_h \colon   (\X, L^\infty) \to (\Y, L^1)$ is continuous, and for each $u\in \X$,
\[
\lim_{h\to 0^+} \|T_h (u)-Du\|_{L^1(\Omega_T)}=\lim_{h\to 0^+} \|\rho_h * Dw-Dw\|_{L^1(\Omega_T)}=0.
\]
Therefore, the spatial gradient operator
$D\colon \mathcal X\to \mathcal  Y$ is the pointwise limit of a sequence of continuous maps $T_h \colon \X\to \Y$; hence $D\colon \mathcal X\to \mathcal  Y$ is  a {\em Baire-one map}.
By Baire's category theorem (e.g., \cite[Theorem 10.13]{BBT}), there exists a
{\em residual set} $\G\subset \mathcal X$ such that the operator $D$ is
continuous at each point of $\mathcal G.$  Since $\X\setminus \G$  is of the {\em first category}, the set $\G$ is {\em dense} in $\X$. Therefore, given any $\varphi\in \X,$ for each $\delta>0$, there exists a function $u\in \G$ such that $\|u-\varphi\|_{L^\infty(\Omega_T)}<\delta.$

3. We now prove that each   $u\in \G$ is a Lipschitz solution to (\ref{ib-P}).   Let $u\in \G$ be given. By the density of $\mathcal U_\epsilon$ in $(\X,L^\infty)$ for each $\epsilon>0$, for every $j\in\mathbb N$, there exists a function $ u_j\in\mathcal U_{1/j}$ such that $\|u_j-u\|_{L^\infty(\Omega_T)} <1/j$. Since the  operator $D\colon (\X, L^\infty)\to (\Y, L^1)$ is continuous at $u$,  we have  $Du_j\to Du$ in $L^1(\Omega_T;\R^n).$  Furthermore, from (\ref{bdry-1}) and the definition of\, $\mathcal U_{1/j}$, there exists a  function $v_j\in W^{1,\infty}_{v^*}(\Omega_T;\R^n)$ such that for each $\zeta\in C^\infty(\bar\Omega_T)$ and each   $t\in [0,T],$
\begin{equation}\label{div-v3}
\begin{split} & \int_\Omega v_j(x,t)\cdot D\zeta(x,t)\,dx  =-\int_\Omega u_j(x,t)\zeta(x,t)\,dx,\\
 \|( & v_j)_t\|_{L^\infty(\Omega_T)}  \le R,\quad  \int_{\Omega_T} |(v_j)_t-A(Du_j)|\,dxdt \leq\frac{1}{j}|\Omega_T|.\end{split}
\end{equation}
Since $v_j(x,0)=v^*(x,0)\in  W^{1,\infty}(\Omega;\R^n)$ and  $\|(v_j)_t\|_{L^\infty(\Omega_T)}  \le R$, it follows that both sequences $\{v_j\}$ and  $\{(v_j)_t\}$ are bounded in $L^2(\Omega_T;\R^n)\approx L^2((0,T);L^2(\Omega;\R^n)).$  So we may  assume
\[
\mbox{$v_j \wcon v $ and $(v_j)_t\wcon v_t$ in $L^2((0,T);L^2(\Omega;\R^n))$}
\]
 for some $v\in W^{1,2}((0,T);L^2(\Omega;\R^n)),$ where $\wcon$ denotes the weak convergence.  Upon taking the limit as $j\to \infty$ in (\ref{div-v3}), since  $v\in C([0,T];L^2(\Omega;\R^n))$ and $A\in C(\R^n;\R^n)$,  we obtain
\[
\begin{split}
    \int_\Omega  v(x,t)&\cdot D\zeta(x,t)\,dx    =  -\int_\Omega u(x,t)\zeta(x,t)\,dx \quad (t\in [0,T]), \\
 &v_t(x,t)= A(Du(x,t)) \quad   a.e. \; (x,t)\in \Omega_T. \end{split}
\]
Consequently, by \cite[Lemma 3.1]{KY1}, $u$ is a Lipschitz solution to (\ref{ib-P}).

4. Finally, assume $\mathcal U$ contains a function which is not a Lipschitz solution to (\ref{ib-P}); hence   $\G\ne \mathcal U.$ Then $\G$ cannot be a finite set, since otherwise the $L^\infty$-closure $\X=\overline{\G}  =\overline{\mathcal U}$ would be a  finite set, making  $\mathcal U=\G.$ Therefore, in this case,   (\ref{ib-P}) admits infinitely many Lipschitz solutions.
The proof is complete.
\end{proof}

\subsection{Uniformly parabolic equations} We refer to the standard references (e.g., \cite{LSU, Ln}) for some notations concerning functions and domains of class $C^{k+\alpha}$ with an integer $k\ge 0$.

Assume $\tilde f\in C^{1+\alpha}([0,\infty))$ is a function satisfying
\begin{equation}\label{para}
  \theta \le \tilde f(s)+2s\tilde f'(s)\le \Theta \quad \forall\;  s\ge 0,
\end{equation}
where $\Theta\ge\theta>0$ are constants. This condition is equivalent to $\theta\le (s\tilde f(s^2))'\le \Theta$ for all $s\in\R;$ hence, $\theta\le \tilde f(s)\le\Theta$ for all $s\ge 0.$ Let
\[
\tilde A(p)=\tilde f(|p|^2)p \quad (p\in \R^n).
\]
Then we have
\[
\tilde A^i_{p_j}(p)  = \tilde f(|p|^2)\delta_{ij} + 2\tilde f'(|p|^2) p_ip_j \quad (i,j=1,2,\cdots,n;\; p\in \R^n)
\]
and hence  the \emph{uniform ellipticity condition}:
\begin{equation*}
\theta |q|^2 \le \sum_{i,j=1}^n \tilde A^i_{p_j}(p) q_iq_j\le \Theta |q|^2\quad \forall\; p,\;q\in\R^n.
\end{equation*}

The proof of the following classical result can be found in \cite[Theorem 13.24]{Ln}.

\begin{thm}\label{existence-gr-max} Assume  $(\ref{assume-1})$.
Then the initial-Neumann boundary value problem
\begin{equation}\label{ib-parabolic}
  \begin{cases}
  u_t=\dv (\tilde A(Du)) & \mbox{in }\Omega_T, \\
  {\partial u}/{\partial \n}=0 & \mbox{on }\partial\Omega\times(0,T), \\
  u(x,0)=u_0(x) & \mbox{for } x\in\Omega
\end{cases}
\end{equation}
admits a unique solution $u\in C^{2+\alpha,1+\alpha/2}(\bar\Omega_T)$.
\end{thm}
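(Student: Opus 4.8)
The plan is to treat \eqref{ib-parabolic} as a quasilinear uniformly parabolic Neumann (conormal) problem and to run the classical scheme: a priori estimates in successively stronger norms, existence by a Leray--Schauder fixed-point argument, and uniqueness by an energy identity. The structural input is precisely the uniform ellipticity displayed just above the statement, together with the fact that \eqref{para} forces $\tilde f$, hence all coefficients $\tilde A^i_{p_j}$, to be bounded, $C^\alpha$ in $p$, and of natural growth; moreover, $\partial\Omega$ of class $C^{2+\alpha}$ and $u_0\in C^{2+\alpha}(\bar\Omega)$ with $Du_0\cdot\n|_{\partial\Omega}=0$ supply the compatibility conditions needed for a solution regular up to $t=0$. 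Note also that $\tilde A(Du)\cdot\n=\tilde f(|Du|^2)\,\partial u/\partial\n$ with $\tilde f>0$, so the Neumann condition $\partial u/\partial\n=0$ and the conormal condition $\tilde A(Du)\cdot\n=0$ are equivalent, and in non-divergence form the equation reads $u_t=\sum_{i,j}a_{ij}(Du)u_{x_ix_j}$ with $a_{ij}(p)=\tilde A^i_{p_j}(p)$.

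Next I would collect the a priori bounds for a classical solution $u$. (a) An $L^\infty$ bound $\|u\|_{L^\infty(\Omega_T)}\le\|u_0\|_{L^\infty(\Omega)}$, since constants solve the equation and the boundary condition, so the comparison principle for divergence-form parabolic operators applies. (b) The crucial interior and up-to-the-boundary gradient estimate $\|Du\|_{L^\infty(\Omega_T)}\le M$, with $M$ depending only on $n,\theta,\Theta,\Omega$ and $\|u_0\|_{C^2(\bar\Omega)}$: interiorly this is the Bernstein technique applied to $|Du|^2$ (equivalently, the Ladyzhenskaya--Ural'tseva estimates for quasilinear equations with controlled structure), and at the boundary one flattens the $C^{2+\alpha}$ surface and handles the conormal condition by a barrier/reflection argument. (c) Once $|Du|\le M$ a.e., the coefficients $a_{ij}(x,t):=\tilde A^i_{p_j}(Du(x,t))$ are bounded and uniformly parabolic, so the De Giorgi--Nash--Moser estimates give $u\in C^{\beta,\beta/2}(\bar\Omega_T)$ for some $\beta\in(0,1)$, and the Ladyzhenskaya--Ural'tseva gradient-Hölder estimate for quasilinear divergence-form equations then upgrades this to $Du\in C^{\beta',\beta'/2}(\bar\Omega_T)$. (d) With $Du$ Hölder continuous the coefficients $a_{ij}$ are Hölder, so the linear parabolic Schauder theory for the conormal problem on a $C^{2+\alpha}$ domain with $C^{2+\alpha}$ initial data and the compatibility condition yields $u\in C^{2+\alpha,1+\alpha/2}(\bar\Omega_T)$ together with a bound on this norm.

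With the estimates in hand, existence follows from the Leray--Schauder fixed point theorem. For $w\in C^{1+\beta',(1+\beta')/2}(\bar\Omega_T)$ let $\Psi(w)=u$ be the unique solution of the \emph{linear} conormal problem with frozen coefficients $a_{ij}(x,t)=\tilde A^i_{p_j}(Dw(x,t))$ and zero right-hand side; linear Schauder theory shows $\Psi$ is well defined, continuous and compact on this space, and the a priori estimate (d) applied to the family $u=\tau\,\Psi(u)$, $\tau\in[0,1]$, yields a $\tau$-independent bound, so $\Psi$ has a fixed point, which is a solution; bootstrapping via (c)--(d) places it in $C^{2+\alpha,1+\alpha/2}(\bar\Omega_T)$. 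For uniqueness, integrating the ellipticity bound along segments gives the strong monotonicity $(\tilde A(p)-\tilde A(q))\cdot(p-q)\ge\theta|p-q|^2$; if $u_1,u_2$ are two solutions and $w=u_1-u_2$, testing $w_t=\dv(\tilde A(Du_1)-\tilde A(Du_2))$ with $w$ and using $(\tilde A(Du_1)-\tilde A(Du_2))\cdot\n=0$ on $\partial\Omega$ gives $\tfrac12\tfrac{d}{dt}\int_\Omega w^2\,dx=-\int_\Omega(\tilde A(Du_1)-\tilde A(Du_2))\cdot Dw\,dx\le-\theta\int_\Omega|Dw|^2\,dx\le0$, so $w(\cdot,0)=0$ forces $w\equiv0$. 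The main obstacle is Step (b): the global gradient bound, in particular up to the boundary, is the only place where the Neumann condition and the precise structure of $\tilde A$ enter in an essential, nonroutine way, and it is what the cited reference \cite[Theorem 13.24]{Ln} does carefully; everything else is standard linear and quasilinear machinery. (Alternatively, since \eqref{para} makes $W(p):=\tfrac12\int_0^{|p|^2}\tilde f(s)\,ds$ a uniformly convex potential with $\tilde A=D_pW$, one could first obtain a weak solution as the gradient flow of $u\mapsto\int_\Omega W(Du)$ via maximal monotone operator theory and then bootstrap regularity through (c)--(d); the bottleneck, the gradient bound, is the same.)
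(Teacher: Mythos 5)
Your proposal is correct and amounts to the same approach the paper relies on: the paper does not prove this theorem but simply cites Lieberman's Theorem~13.24, and your sketch faithfully reproduces the classical scheme underlying that citation (comparison-principle $L^\infty$ bound, global gradient estimate, De~Giorgi--Nash--Moser and gradient-H\"older estimates, parabolic Schauder theory for the conormal problem, Leray--Schauder existence, and monotonicity-based uniqueness). You also correctly identify the up-to-the-boundary gradient bound as the one genuinely nonroutine ingredient, which is precisely what the cited reference supplies.
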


\subsection{Modified profile}\label{subsec-modi}

The following elementary result can be proved in a similar way as in \cite{Zh}; we omit the proof.

\begin{lem}[see Figure \ref{fig4}]\label{lem:modi-NF}
Assume \emph{Hypothesis (NF)}.
Then for each $0<r<\sigma(s_+)$, there exists a function  $\tilde\sigma\in C^{1+\alpha}([0,\infty))$ with $\tilde\sigma(0)=0$  such that $\tilde\sigma$ is linear near $0$ and that
\begin{equation*}
\begin{cases}
                     \tilde\sigma(s)>\sigma(s), & 0< s < s_+(r), \\
                     \tilde\sigma(s)=\sigma(s), & s_+(r)\le s <\infty, \\
                   \theta \le \tilde\sigma'(s) \le \Theta, & 0\leq s<\infty
\end{cases}
\end{equation*}
for some constants  $\Theta\ge\theta>0.$
With such a function $\tilde \sigma$, define $\tilde f(s)=\tilde\sigma(\sqrt s)/\sqrt s$\; $(s>0)$ and $\tilde f(0)=\lim_{s\to 0^+} \tilde\sigma(\sqrt s)/\sqrt s \,;$ then $\tilde f\in C^{1+\alpha}([0,\infty))$ fulfills condition $(\ref{para})$.
\end{lem}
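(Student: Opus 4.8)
\textbf{Proof proposal for Lemma \ref{lem:modi-NF}.}

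The plan is to build $\tilde\sigma$ explicitly on three overlapping regions and then glue, rather than to produce a single formula. First I would handle the far region: for $s\ge s_+(r)$ we are forced to set $\tilde\sigma(s)=\sigma(s)$, and by Hypothesis (NF)(i)--(ii) this piece already lies in $C^{1+\alpha}$ (recall $\sigma\in C^{1+\alpha}$ on $(s_0^2,\infty)$ after the substitution $s\mapsto s^2$, and $s_+(r)>s_0$). On $s>2s_0$ we already have $\lambda\le\sigma'\le\Lambda$, so choosing $\theta\le\lambda$ and $\Theta\ge\Lambda$ (and possibly enlarging $\Theta$ to dominate the maximum of $\sigma'$ on the compact interval $[s_+(r),2s_0]$, where $\sigma'>0$ and continuous) gives $\theta\le\tilde\sigma'\le\Theta$ on $[s_+(r),\infty)$.

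Next I would handle the region near $0$: pick a small $\eta\in(0,s_-)$ and a slope $a>0$, and set $\tilde\sigma(s)=a s$ for $0\le s\le\eta$. Since $\sigma'(s)<0$ on $(0,s_-)$ and $\sigma(0)=0$, we have $\sigma(s)<0<as$ there, so the strict inequality $\tilde\sigma(s)>\sigma(s)$ is automatic for any $a>0$; this also ensures $\tilde\sigma$ is linear near $0$ and $\tilde\sigma(0)=0$. The only real work is the middle region $[\eta,s_+(r)]$: here I must interpolate, keeping $\tilde\sigma$ strictly above $\sigma$, matching $C^1$ (indeed $C^{1+\alpha}$) data at the two endpoints --- value $a\eta$ and slope $a$ at $s=\eta$, value $\sigma(s_+(r))=r$ and slope $\sigma'(s_+(r))>0$ at $s=s_+(r)$ --- and with derivative staying inside a band $[\theta,\Theta]$. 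Since $\sigma<r$ on $(0,s_+(r))$ and $\sigma$ is bounded above by $r$ on this compact interval, any curve from $(\eta,a\eta)$ to $(s_+(r),r)$ whose graph stays above $\max_{[\eta,s_+(r)]}\sigma$ on the interior works for the strict-inequality requirement; one can, e.g., first route $\tilde\sigma$ upward steeply to a plateau above $\max\sigma$, run roughly flat, then come down to $r$ with the prescribed terminal slope, smoothing corners by a standard $C^{1+\alpha}$ mollification of the derivative. Choosing $\eta$ small and $a$ moderate, the total rise $r-a\eta$ over length $s_+(r)-\eta$ can be realized with slopes in a fixed band, so $\theta\le\tilde\sigma'\le\Theta$ holds after absorbing these constants into $\theta,\Theta$.

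Finally, the passage to $\tilde f$: define $\tilde f(s)=\tilde\sigma(\sqrt s)/\sqrt s$ for $s>0$. Because $\tilde\sigma$ is linear near $0$, say $\tilde\sigma(t)=at$ for small $t$, we get $\tilde f(s)=a$ for small $s>0$, so the limit defining $\tilde f(0)$ exists and equals $a$, and $\tilde f$ is smooth (constant) near $0$. Away from $0$ the map $s\mapsto\sqrt s$ is $C^\infty$ with nonvanishing derivative, so $\tilde f\in C^{1+\alpha}((0,\infty))$; combined with the behavior near $0$ this gives $\tilde f\in C^{1+\alpha}([0,\infty))$. Condition (\ref{para}) is then exactly the statement $\theta\le(\,t\tilde f(t^2)\,)'\le\Theta$ for all $t\ge0$, and since $t\tilde f(t^2)=\tilde\sigma(t)$ for $t\ge0$, this is precisely $\theta\le\tilde\sigma'\le\Theta$, already arranged. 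I expect the main obstacle to be the bookkeeping in the middle-region interpolation: simultaneously enforcing the two-sided derivative bounds, the strict domination $\tilde\sigma>\sigma$, and $C^{1+\alpha}$ matching at both endpoints requires choosing $\eta$, the slope $a$, and the plateau height compatibly --- this is the one place where the shape hypotheses in (NF)(ii) (monotonicity pattern of $\sigma$, and $\sigma<r$ strictly below $s_+(r)$) are genuinely used, and it is why the lemma is asserted only for $r<\sigma(s_+)$.
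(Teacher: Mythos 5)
The paper itself omits the proof of Lemma \ref{lem:modi-NF} (deferring to \cite{Zh}), so there is no in-paper argument to compare against; I evaluate your sketch on its own terms. Your three-region decomposition is the right framework, the far region $s\ge s_+(r)$ and the passage from $\tilde\sigma$ to $\tilde f$ are handled correctly, and you rightly identify the middle region $[\eta,s_+(r)]$ as the crux. However, the specific middle-region construction you propose cannot work. On $[\eta,s_+(r)]$ the function $\sigma$ attains its maximum $r$ precisely at the right endpoint $s_+(r)$, where $\tilde\sigma$ must also equal $r$; hence \emph{any} curve that ``routes upward to a plateau above $\max_{[\eta,s_+(r)]}\sigma = r$ and then comes down to $r$'' must be decreasing somewhere, contradicting the requirement $\tilde\sigma'\ge\theta>0$. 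The sufficient condition you aim for --- $\tilde\sigma$ lying above $\max\sigma$ on the interior --- is both stronger than needed and incompatible with the derivative lower bound; what is actually required is only the pointwise inequality $\tilde\sigma>\sigma$.

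The genuine subtlety, which your sketch does not address, is the tangency at $s=s_+(r)$: there $\tilde\sigma$ and $\sigma$ agree both in value ($=r$) and in slope ($=\sigma'(s_+(r))>0$), so the strict inequality $\tilde\sigma(s)>\sigma(s)$ just to the left of $s_+(r)$ is not automatic from any coarse lower bound on $\tilde\sigma$ and must be arranged at second/H\"older order. Since $\sigma\in C^{1+\alpha}$ on a neighborhood of $s_+(r)$ (this is exactly where Hypothesis (NF)(i) is used), one can, for instance, set $\tilde\sigma(s)=\sigma(s)+\epsilon_0\,\bigl(s_+(r)-s\bigr)_+^{2}$ on a small left neighborhood of $s_+(r)$ (and $=\sigma$ to the right); then $\tilde\sigma-\sigma>0$ strictly for $s<s_+(r)$, $\tilde\sigma'=\sigma'-2\epsilon_0(s_+(r)-s)_+$ stays positive and bounded for $\epsilon_0$ small, and $\tilde\sigma\in C^{1,1}\subset C^{1+\alpha}$ across $s_+(r)$. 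From there one interpolates back to the linear piece near $0$ by an increasing $C^{1+\alpha}$ curve that stays positive on $(\eta,s_0)$ (where $\sigma<0$) and above $\sigma(s_0')$ on $[s_0,s_0']$ for the chosen matching point $s_0'$. This tangency step, not the ``bookkeeping'' you flag, is the one nontrivial point of the lemma; as written, your proposal has a genuine gap there.
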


\begin{figure}[ht]
\begin{center}
\begin{tikzpicture}[scale =1.2]
	\draw[->] (0,0.5) -- (0,5.5);
    \draw[->] (-.5,2.6) -- (9,2.6);
 \draw[dashed] (0,1.15)--(2.7,1.15);
 \draw[dashed] (0,4.05)--(7.05,4.05);
  \draw[dashed] (0,3.3)--(6.3,3.3);
  \draw[dashed] (0,1.9)--(4.65,1.9);
    \draw[dashed] (7.05, 2.6)  --  (7.05, 4.05) ;
    \draw[dashed] (6.3, 2.6)  --  (6.3, 3.3) ;
   \draw[dashed] (2.8, 1.2)  --  (2.8, 2.6) ;
   \draw[dashed] (0.9, 1.9)  --  (0.9, 2.6) ;
   \draw[dashed] (4.6, 1.9)  --  (4.6, 2.6) ;
	\draw[thick]   (0, 2.6) .. controls (3,0) and  (4, 1)   ..(8,5);
   \draw[thick,red] (0,2.6) -- (2,2.75);
   \draw[thick,red] (2,2.75) .. controls (3,2.8) and  (5.5, 2.85)   ..(6.3,3.3);
	\draw (9,2.6) node[below] {$s$};
    \draw (-0.3,2.6) node[below] {{$0$}};
    \draw (-0.3,3.5) node[below] {$r$};
    \draw (-0.4,2.18) node[below] {$-r$};
    \draw (3, 2.8) node[above] {\textcolor{red}{$\tilde\sigma(s)$}};
    \draw (8, 4) node[above] {$\sigma(s)$};
 \draw (0, 4.05) node[left] {$\sigma(s_+)$};
 \draw (0, 1.15) node[left] {$\sigma(s_-)$};
   \draw (5.6, 2.6) node[below] {$s_0$};
   \draw (6.3, 2.7) node[below] {$s_+(r)$};
   \draw (7.1, 2.6) node[below] {$s_+$};
    \end{tikzpicture}
\end{center}
\caption{Non-Fourier type profile $\sigma(s)$ and modified  \textcolor{red}{$\tilde\sigma(s)$}.}
\label{fig4}
\end{figure}
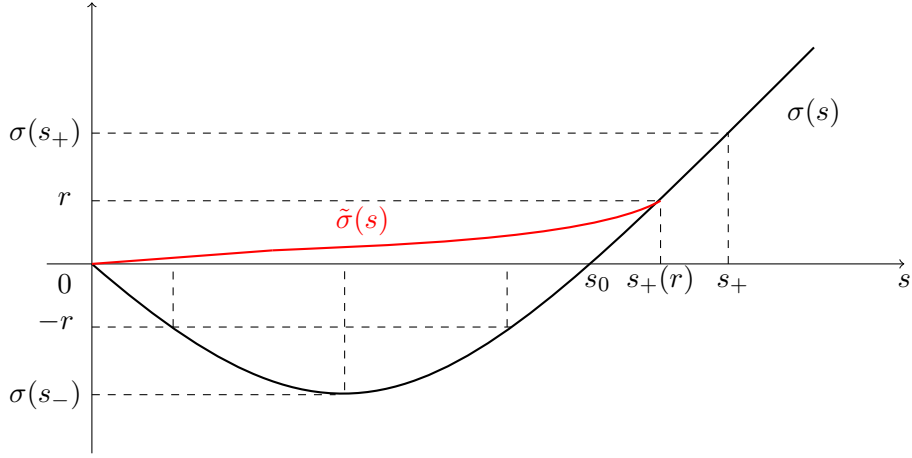

\subsection{Right inverse of the  divergence operator} We follow an argument of Bourgain and Brezis \cite[Lemma 4]{BB} to construct  a right inverse $\mathcal R$ of the divergence operator: $\dv \mathcal R=\mbox{Id}$ (in the sense of distributions in $\Omega_T$). For the purpose of this paper, the construction of $\mathcal R$ is restricted to a {\em box}, by which we mean  a domain $Q$ given by $Q=J_1\times J_2\times \cdots\times J_n$, where $J_i=(a_i,b_i)\subset\R$ is a finite open  interval.

We have the following result \cite[Theorem 2.3]{KY1}.

\begin{thm}\label{div-inv} Let $Q\times I$ be a box in $\R^{n+1}$, where $I\subset\R$ is a finite open interval. Then there is a bounded linear operator $\mathcal R=\mathcal R_n \colon  L^\infty(Q\times I)\to L^\infty(Q\times I;\R^n)$ satisfying the following: If $u\in W^{1,\infty}_0(Q\times I)$ is such that  $\int_{Q}u(x,t)\,dx=0$ for all $t\in I$, then $v:=\mathcal R u\in W^{1,\infty}_0(Q\times I;\R^n)$,  $\dv v=u$ a.e.\,in $Q\times I$, and
\begin{equation}\label{div-1}
\|v_t \|_{L^\infty(Q\times I)}  \le C_n \,(|J_1|+\cdots+|J_n|) \|u_t\|_{L^\infty(Q\times I)},
\end{equation}
where $Q=J_1\times\cdots\times J_n$ and $C_n>0$ is a dimensional constant.  Moreover, if $u\in C^1(\overline{Q\times I})$, then $v  \in C^1(\overline{Q\times I};\R^{n}).$
\end{thm}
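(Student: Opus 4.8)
The final statement to prove is Theorem~\ref{div-inv}, the construction of a right inverse of the divergence operator on a box $Q\times I\subset\R^{n+1}$, bounded on $L^\infty$ and with the time-derivative estimate \eqref{div-1}. The reference \cite[Lemma 4]{BB} of Bourgain--Brezis suggests an inductive construction on the number $n$ of ``space'' variables, treating the time variable $t$ as a passive parameter throughout so that all estimates are uniform in $t$ and commute with $\partial_t$.

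The plan is to induct on $n$. For $n=1$, write $Q\times I=(a_1,b_1)\times I$ and set
\[
(\mathcal R_1 u)(x_1,t)=\int_{a_1}^{x_1}u(s,t)\,ds-\frac{x_1-a_1}{b_1-a_1}\int_{a_1}^{b_1}u(s,t)\,ds.
\]
One checks directly that $\partial_{x_1}(\mathcal R_1 u)=u$, that $\mathcal R_1 u$ vanishes at $x_1=a_1,b_1$, that $\|\mathcal R_1 u\|_{L^\infty}\le 2|J_1|\,\|u\|_{L^\infty}$, and since $\partial_t$ passes under the integral sign, $\partial_t(\mathcal R_1 u)=\mathcal R_1(\partial_t u)$, giving $\|\partial_t(\mathcal R_1 u)\|_{L^\infty}\le 2|J_1|\,\|u_t\|_{L^\infty}$; the hypothesis $\int_{a_1}^{b_1}u\,dx_1=0$ simply kills the correction term. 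The $C^1$ persistence is immediate from the explicit formula. For the inductive step, suppose $\mathcal R_{n-1}$ is constructed on boxes in $\R^{(n-1)+1}$. Given $u\in L^\infty(Q\times I)$ with $Q=J_1\times\cdots\times J_n$, split off the last variable $x_n$: put $\bar u(x_1,\dots,x_{n-1},t)=\frac{1}{|J_n|}\int_{J_n}u\,dx_n$, so that $u-\bar u$ has zero average in $x_n$ for every $(x_1,\dots,x_{n-1},t)$. Apply the one-dimensional operator in $x_n$ to $u-\bar u$ to obtain the $n$-th component $v^n$, with $\partial_{x_n}v^n=u-\bar u$ and the appropriate bounds on $\|v^n\|_\infty$ and $\|\partial_t v^n\|_\infty$ in terms of $|J_n|$. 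Then apply $\mathcal R_{n-1}$ (in the variables $x_1,\dots,x_{n-1}$, with $(x_n,t)$ as parameters) to $\bar u$ -- which under the standing hypothesis $\int_Q u\,dx=0$ has $\int_{J_1\times\cdots\times J_{n-1}}\bar u\,dx'=0$ -- to get $(v^1,\dots,v^{n-1})$ with $\sum_{i=1}^{n-1}\partial_{x_i}v^i=\bar u$. Setting $v=(v^1,\dots,v^{n-1},v^n)$ gives $\dv v=\bar u+(u-\bar u)=u$, and the boundary vanishing on all faces follows from the two constructions.

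The estimate \eqref{div-1} then follows by adding: $\|v_t\|_{L^\infty}\le\|\partial_t v^n\|_\infty+\|\partial_t(v^1,\dots,v^{n-1})\|_\infty$, where the first term is controlled by $C|J_n|\,\|u_t\|_\infty$ and the second, by the inductive hypothesis applied to $\bar u$, by $C_{n-1}(|J_1|+\cdots+|J_{n-1}|)\|\partial_t\bar u\|_\infty\le C_{n-1}(|J_1|+\cdots+|J_{n-1}|)\|u_t\|_\infty$, since averaging does not increase the $L^\infty$ norm. Adjusting constants yields the claimed form with a dimensional constant $C_n$. The $L^\infty$-boundedness of $\mathcal R_n$ as an operator is obtained the same way (with $\|u\|_\infty$ in place of $\|u_t\|_\infty$), and linearity is clear since every step is linear in $u$. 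The $C^1$ statement propagates through the induction because each building block maps $C^1$ to $C^1$.

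I do not expect a genuine obstacle here, since this is a well-trodden ``antiderivative plus mean-zero correction'' argument; the only points demanding care are bookkeeping ones: verifying that the mean-zero-slice hypothesis is exactly what is needed at each stage so that the correction terms vanish and no boundary defect is introduced, and making sure that differentiation in $t$ genuinely commutes with every operation (integration in the spatial variables, the affine corrections, and the inductive application of $\mathcal R_{n-1}$) so that the time-derivative bound is inherited cleanly rather than requiring a separate difference-quotient argument. Since the paper explicitly defers to \cite[Lemma 4]{BB} and \cite[Theorem 2.3]{KY1}, the write-up can be kept brief, emphasizing the inductive structure and the parameter-dependence in $t$, and citing those sources for the full details.
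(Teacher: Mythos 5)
Your overall plan --- induct on the number of spatial variables, peeling off $x_n$ and reducing to the $(n-1)$-variable operator --- is the right one and is essentially what \cite[Lemma 4]{BB} and \cite[Theorem 2.3]{KY1} do. But there is a genuine gap in your inductive step. You put $\bar u(x',t)=\tfrac{1}{|J_n|}\int_{J_n}u\,dx_n$ and define $(v^1,\dots,v^{n-1})=\mathcal R_{n-1}\bar u$ ``with $(x_n,t)$ as parameters.'' Since $\bar u$ does not depend on $x_n$, neither do $v^1,\dots,v^{n-1}$, and therefore they cannot vanish on the two faces $\{x_n=a_n\}$, $\{x_n=b_n\}$ of $Q\times I$ unless they vanish identically. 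So the vector field $v$ you construct is in general \emph{not} in $W^{1,\infty}_0(Q\times I;\R^n)$, contrary to the claim ``the boundary vanishing on all faces follows from the two constructions.'' This matters: the density argument in Section~\ref{sec:den-proof} extends $g^j_{l,i}=\mathcal R\varphi^j_{l,i}$ by zero across $\partial(Q^j_{l,i}\times I^j_{l,i})$, which requires full boundary vanishing.

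The fix (which is the heart of the Bourgain--Brezis construction) is to introduce a cutoff in the direction you peel off. Take $\chi\in C^\infty_c(J_n)$ with $\int_{J_n}\chi\,dx_n=1$, set $\bar u(x',t)=\int_{J_n}u(x',x_n,t)\,dx_n$ (no normalization), and decompose $u=\chi(x_n)\bar u(x',t)+(u-\chi\bar u)$. Define $v^n(x',x_n,t)=\int_{a_n}^{x_n}(u(x',s,t)-\chi(s)\bar u(x',t))\,ds$, which vanishes at $x_n=a_n$ by definition and at $x_n=b_n$ because $\int_{J_n}(u-\chi\bar u)\,dx_n=0$; and define $v^i=\chi(x_n)\,(\mathcal R_{n-1}\bar u)^i$ for $i<n$, which now vanish on the $x_n$-faces because $\chi$ does, and on the remaining faces by the inductive hypothesis applied to $\bar u$. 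Then $\dv v=\chi\bar u+(u-\chi\bar u)=u$. The estimate \eqref{div-1} survives because one can take $\|\chi\|_{L^\infty}\lesssim 1/|J_n|$ while $\|\bar u\|_\infty$ and $\|\bar u_t\|_\infty$ pick up a compensating factor $|J_n|$, yielding $C_n(|J_1|+\cdots+|J_n|)\|u_t\|_\infty$. With this correction in place, the commutation of $\partial_t$ with each step and the $C^1$ persistence go through exactly as you argued.
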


\section{Geometry of the relevant matrix sets}\label{sec:geometry}

Let $A(p)$ be the diffusion flux given by (\ref{fun-A}) with profile $\sigma(s)=s f(s^2)$ satisfying Hypothesis (NF). Let $K_0$ be the subset of the $(1+n)\times(n+1)$ matrix space $\M^{(1+n)\times(n+1)}$ defined by
\begin{equation*}
 K_0= \left\{ \begin{pmatrix} p & c\\ B & A(p)\end{pmatrix}\,\Big | \,
  p\in\R^n,\, c\in\R,\,B\in\mathbb{M}^{n\times n},\,  \tr B=0
 \right\}.
\end{equation*}

Under Hypothesis  (NF), certain structures  of  the set $K_0$ turn out to be  quite useful, especially when it comes to the relaxation of homogeneous partial differential inclusion $\nabla \omega(z)\in K_0$ with $z=(x,t)$ and $\omega=(\varphi,\psi)$. We investigate these structures and establish such a relaxation result for both \textbf{FFT} and \textbf{BFT} solutions throughout this section.

\subsection{Geometry of the matrix set $K_0$} We study some subsets of $K_0$, depending on the different types of solutions to be sought.

\subsubsection*{\bf Type I: FFT solutions} In search for this type of solutions, we adopt the following notations. Fix any two numbers $0<r_1<r_2<\sigma(s_+)$, and let $F_0=F_{0,r_1,r_2}$ be the subset of $K_0$ defined by
\[
F_0=\left\{ \begin{pmatrix} p & c\\ B & A(p)\end{pmatrix}\,\Big|\, \begin{array}{l}
                                                                                         p\in\R^n, \, |p|\in(s^2_-(r_2),s^2_-(r_1))\cup(s_+(r_1),s_+(r_2)), \\
                                                                                         c\in\R, \, B\in \mathbb M^{n\times n},\, \tr B=0
                                                                                       \end{array}
   \right\}.
\]
We decompose the set $F_0$ into two disjoint subsets as follows:
\[
F_-=F_{-,r_1,r_2}=\left\{ \begin{pmatrix} p & c\\ B & A(p)\end{pmatrix}\,\Big|\, \begin{array}{l}
                                                                                         p\in\R^n, \, |p|\in(s^2_-(r_2),s^2_-(r_1)), \\
                                                                                         c\in\R, \, B\in \mathbb M^{n\times n},\, \tr B=0
                                                                                       \end{array}
   \right\},
\]
\[
F_+=F_{+,r_1,r_2}=\left\{ \begin{pmatrix} p & c\\ B & A(p)\end{pmatrix}\,\Big|\, \begin{array}{l}
                                                                                         p\in\R^n, \, |p|\in(s_+(r_1),s_+(r_2)), \\
                                                                                         c\in\R, \, B\in \mathbb M^{n\times n},\, \tr B=0
                                                                                       \end{array}
   \right\}.
\]

\subsubsection*{\bf Type II: BFT solutions} To handle this type of solutions, we use the following notations. Fix any two numbers $0<r_1<r_2<\sigma(s_+)$, and let $F_0=F_{0,r_1,r_2}$ be the subset of $K_0$ given by
\[
F_0=\left\{ \begin{pmatrix} p & c\\ B & A(p)\end{pmatrix}\,\Big|\, \begin{array}{l}
                                                                                         p\in\R^n, \, |p|\in(s^1_-(r_1),s^1_-(r_2))\cup(s_+(r_1),s_+(r_2)), \\
                                                                                         c\in\R, \, B\in \mathbb M^{n\times n},\, \tr B=0
                                                                                       \end{array}
   \right\}.
\]
The set $F_0$ is then decomposed into two disjoint subsets as follows:
\[
F_-=F_{-,r_1,r_2}=\left\{ \begin{pmatrix} p & c\\ B & A(p)\end{pmatrix}\,\Big|\, \begin{array}{l}
                                                                                         p\in\R^n, \, |p|\in(s^1_-(r_1),s^1_-(r_2)), \\
                                                                                         c\in\R, \, B\in \mathbb M^{n\times n},\, \tr B=0
                                                                                       \end{array}
   \right\},
\]
\[
F_+=F_{+,r_1,r_2}=\left\{ \begin{pmatrix} p & c\\ B & A(p)\end{pmatrix}\,\Big|\, \begin{array}{l}
                                                                                         p\in\R^n, \, |p|\in(s_+(r_1),s_+(r_2)), \\
                                                                                         c\in\R, \, B\in \mathbb M^{n\times n},\, \tr B=0
                                                                                       \end{array}
   \right\}.
\]


In order to study the homogeneous differential inclusion $\nabla \omega(z)\in K_0$, we first scrutinize the rank-one structure of the set $F_0\subset K_0$ for each type. To this aim, we define
\[
R(F_0)=\bigcup_{\xi_\pm\in F_\pm,\,\mathrm{rank}(\xi_+-\xi_-)=1}(\xi_-,\xi_+),
\]
where $(\xi_-,\xi_+)$ is the open line segment in $\mathbb{M}^{(1+n)\times(n+1)}$ joining $\xi_\pm$.
We now explore the structure of the set $R(F_0)$ in detail for both types simultaneously.

\subsubsection{\bf Alternate expression for $R(F_0)$} We  establish more specific criteria for  matrices in  $R(F_0)$ than its definition.
\begin{lem}\label{lem-form}
Let $\xi\in\mathbb{M}^{(1+n)\times(n+1)}$. Then
$\xi\in R(F_0)$ if and only if there exist numbers $t_-<0<t_+$ and vectors $q,\,\gamma\in\R^n$ with $|q|=1,\,\gamma\cdot q=0$ such that for each $b\in\R\setminus\{0\}$, if
$\eta=\begin{pmatrix} q & b\\ \frac{1}{b}q\otimes\gamma & \gamma\end{pmatrix}$, then
$\xi+t_\pm\eta\in F_\pm$.
\end{lem}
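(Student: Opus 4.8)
The plan is to prove both implications directly from the definition of $R(F_0)$, reducing everything to the rank-one structure of the matrices $\xi_+-\xi_-$ with $\xi_\pm\in F_\pm$.

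First I would analyze the forward direction. Suppose $\xi\in R(F_0)$; then by definition there are $\xi_\pm\in F_\pm$ with $\mathrm{rank}(\xi_+-\xi_-)=1$ and $\xi\in(\xi_-,\xi_+)$. Write $\xi_\pm=\begin{pmatrix} p_\pm & c_\pm\\ B_\pm & A(p_\pm)\end{pmatrix}$ with $\tr B_\pm=0$. A rank-one matrix in $\M^{(1+n)\times(n+1)}$ can be written as $a\otimes w$ for vectors $a\in\R^{1+n}$, $w\in\R^{n+1}$; decomposing rows and columns, the top-left $1\times n$ block being $p_+-p_-$ forces (after normalizing) the ``column direction'' to be a unit vector $q\in\R^n$ supplemented by a scalar, and the ``row direction'' to split into a vector part and a scalar part. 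The key structural point is that $\tr(B_+-B_-)=0$ together with rank-one-ness forces the lower-left block of $\xi_+-\xi_-$ to have the form $\frac1b q\otimes\gamma$ with $\gamma\cdot q=0$ (the trace of $q\otimes\gamma$ is $\gamma\cdot q$), and then the lower-right block is forced to be a multiple of $\gamma$ and the top-right block a multiple of $b$; matching the actual entries $A(p_+)-A(p_-)$ and $c_+-c_-$ pins down the scalar ratios. Setting $t_\pm$ so that $\xi=\xi_\mp + (\text{appropriate sign})\,t_\pm\eta$ — i.e. choosing $t_-<0<t_+$ with $\xi+t_\pm\eta=\xi_\pm$ — gives exactly the claimed $\eta$, and the freedom in $b$ is genuine because rescaling $b\mapsto\lambda b$ rescales $\gamma\mapsto\lambda\gamma$, $\eta$'s off-diagonal blocks compensating, so $\xi+t_\pm\eta$ is independent of $b$. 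The point $|Du|$-constraints defining $F_\pm$ transfer verbatim since $p_\pm$ is the top-left block of $\xi+t_\pm\eta$.

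For the converse, given $t_-<0<t_+$, unit $q$, and $\gamma\perp q$ as in the statement, I would simply set $\xi_\pm:=\xi+t_\pm\eta$ (for any fixed choice of $b\ne0$, say $b=1$) and check: the top-left blocks $p_\pm$ have $|p_\pm|$ in the required intervals by hypothesis, so $p_\pm$ lies in the relevant annular region; the lower-right block of $\xi_\pm$ must equal $A(p_\pm)$ — this is part of what ``$\xi+t_\pm\eta\in F_\pm$'' asserts — hence $\xi_\pm\in F_\pm\subset K_0$ (the trace-zero condition on the lower-left block of $\xi_\pm$ is likewise part of membership in $F_\pm$). Then $\xi_+-\xi_-=(t_+-t_-)\eta$, and since $\eta$ is manifestly rank one (its columns are all multiples of the single vector $(q,\tfrac1b q\otimes\gamma)^{\!*}$... more precisely $\eta=\binom{1}{\frac1b\gamma}\!\otimes\! (q,b)$ after identifying blocks appropriately), we get $\mathrm{rank}(\xi_+-\xi_-)=1$. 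Finally $\xi=\xi_-+\frac{-t_-}{t_+-t_-}(\xi_+-\xi_-)$ with $\frac{-t_-}{t_+-t_-}\in(0,1)$ because $t_-<0<t_+$, so $\xi\in(\xi_-,\xi_+)\subset R(F_0)$.

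The main obstacle I anticipate is the bookkeeping in the forward direction: carefully extracting from ``$B_+-B_-$ is trace-free and the whole $4$-block matrix is rank one'' the precise normal form $\eta=\begin{pmatrix} q & b\\ \frac1b q\otimes\gamma & \gamma\end{pmatrix}$ with $\gamma\cdot q=0$, and verifying that the construction is truly $b$-independent. This is linear algebra with a nonlinear constraint ($A(p_\pm)$ is not linear in $p_\pm$), so one has to be careful that the rank-one condition is imposed on the difference of matrices and the profile values are then consistent — but since $A$ only appears as a passenger in the lower-right block and we are only asked for an existence statement about $t_\pm,q,\gamma$, no inversion of $A$ is needed; one just reads off $t_\pm$ from, e.g., the top-left blocks once $q$ is fixed. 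I would also remark that the degenerate cases ($\gamma=0$, corresponding to $p_+-p_-$ parallel to $q$ with no lower-block contribution) are handled uniformly since $\gamma=0$ is allowed.
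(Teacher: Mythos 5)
Your proposal is correct and follows essentially the same route as the paper: write the rank-one connection $\xi_\pm-\xi$ as a tensor product, use $p_+\neq p_-$ (forced by the disjoint norm constraints) to normalize the column direction to a unit vector $q$, use trace-freeness of the $n\times n$ blocks to get $\gamma\cdot q=0$, and observe that membership in $F_\pm$ constrains only the diagonal blocks plus the trace of $B$, so the off-diagonal blocks of $\eta$ may be re-parametrized by an arbitrary $b\neq 0$. One small slip: the matrix $\xi+t_\pm\eta$ is \emph{not} independent of $b$ (its off-diagonal blocks change), but its membership in $F_\pm$ is, which is the property your argument actually uses and correctly justifies.
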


\begin{proof}
Assume $\xi=\begin{pmatrix} p & c\\ B & \beta\end{pmatrix}\in R(F_0)$. By definition, $
\xi+t_\pm\tilde\eta\in F_\pm,
$
where $t_-<0<t_+$ and $\tilde\eta$ is a rank-one matrix given by
\[
\tilde\eta=\begin{pmatrix} a\\ \alpha \end{pmatrix}\otimes (q,\tilde b)=\begin{pmatrix} aq & a\tilde b\\ \alpha\otimes q & \tilde b\alpha\end{pmatrix},\quad a^2+|\alpha|^2\neq0, \quad \tilde b^2+|q|^2\neq0,
\]
for some $a,\,\tilde b\in\R$ and $\alpha,\,q\in\R^n$; here $\alpha\otimes q$ denotes the rank-one or zero matrix $(\alpha_iq_j)$ in $\mathbb{M}^{n\times n}$. Condition $\xi+t_\pm\tilde\eta\in F_\pm$ with $t_-<0<t_+$ is equivalent to the following:
For \textbf{Type I},
\begin{equation}\label{form-PM}
\begin{split}
&\tr B=0,\quad \alpha\cdot q=0,\quad A(p+t_\pm aq)=\beta+t_\pm \tilde b\alpha,\\  |p+t&_+ aq|\in(s_+(r_1),s_+(r_2)),\quad |p+t_- aq|\in(s^2_-(r_2),s^2_-(r_1)).
\end{split}
\end{equation}
For \textbf{Type II},
\begin{equation}\label{form-C}
\begin{split}
&\tr B=0,\quad \alpha\cdot q=0,\quad A(p+t_\pm aq)=\beta+t_\pm \tilde b\alpha,\\  |p+t&_+ aq|\in(s_+(r_1),s_+(r_2)),\quad |p+t_- aq|\in(s^1_-(r_1),s^1_-(r_2)).
\end{split}
\end{equation}
Therefore, $aq\neq 0$. Upon rescaling $\tilde\eta$ and $t_\pm$, we can assume $a=1$ and $|q|=1$; namely,
\[
\tilde\eta=\begin{pmatrix} q & \tilde b\\ \alpha\otimes q & \tilde b\alpha\end{pmatrix},\quad |q|=1,\quad \alpha\cdot q =0.
\]
We now set $\gamma=\tilde b\alpha$. Let $b\in\R\setminus\{0\}$ and
\begin{equation*}
\eta=\begin{pmatrix} q & b\\ \frac{1}{b}\gamma\otimes q & \gamma\end{pmatrix}.
\end{equation*}
From (\ref{form-PM}) (\textbf{Type I}), (\ref{form-C}) (\textbf{Type II}), it follows that $\xi+t_\pm\eta\in F_\pm$.

The converse easily follows from the definition of $R(F_0)$.
\end{proof}

\subsubsection{\bf Diagonal components of matrices in $R(F_0)$.}
The following gives a  description for the diagonal components of matrices in  $R(F_0)$ for both types; the proof is precisely the same as that of \cite[Lemma 5.3]{KY2} and thus not reproduced here.
\begin{lem}\label{lem-rough}
\begin{equation}\label{rough-1}
R(F_0)=\left\{ \begin{pmatrix} p & c\\ B & \beta\end{pmatrix}\,\Big|\, c\in\R,\,B\in\mathbb{M}^{n\times n},\,\tr B=0, \, (p,\beta)\in\mathcal{S}
   \right\}
\end{equation}
for some set $\mathcal{S}=\mathcal{S}_{r_1,r_2}\subset\R^{n+n}$.
\end{lem}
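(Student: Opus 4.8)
The plan is to read off the description of $R(F_0)$ provided by Lemma \ref{lem-form} and observe that the ``diagonal block'' part of the conditions there — namely $\tr B = 0$ together with the constraints on $p$ and $\beta$ — decouples completely from the free entries $c \in \R$ and $B \in \M^{n\times n}$ (beyond the trace-zero requirement). Concretely, given $\xi = \begin{pmatrix} p & c \\ B & \beta\end{pmatrix}$, whether $\xi \in R(F_0)$ depends on $c$ and $B$ only through the condition $\tr B = 0$: the remaining requirements involve only $p$, $\beta$ and the auxiliary data $t_\pm, q, \gamma$. Hence I would \emph{define}
\[
\mathcal{S} = \mathcal{S}_{r_1,r_2} = \left\{ (p,\beta) \in \R^{n+n} \,\Big|\, \begin{pmatrix} p & 0 \\ 0 & \beta \end{pmatrix} \in R(F_0) \right\},
\]
and then prove the two inclusions in \eqref{rough-1}.

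First I would establish the inclusion ``$\subseteq$''. Suppose $\xi = \begin{pmatrix} p & c \\ B & \beta\end{pmatrix} \in R(F_0)$. By Lemma \ref{lem-form} there exist $t_- < 0 < t_+$ and $q, \gamma \in \R^n$ with $|q| = 1$, $\gamma \cdot q = 0$ such that, taking (say) $b = 1$ and $\eta = \begin{pmatrix} q & 1 \\ q\otimes\gamma & \gamma\end{pmatrix}$, we have $\xi + t_\pm \eta \in F_\pm$. Unwinding the membership in $F_\pm$ (for either type), this forces $\tr B = 0$, and the genuinely restrictive conditions are $A(p + t_\pm q) = \beta + t_\pm \gamma$ together with the size constraints $|p + t_+ q|$ and $|p + t_- q|$ lying in the appropriate intervals. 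None of these involve $c$ or $B$. Therefore the \emph{same} data $t_\pm, q, \gamma$ certify, again via Lemma \ref{lem-form}, that $\begin{pmatrix} p & 0 \\ 0 & \beta\end{pmatrix} \in R(F_0)$, i.e.\ $(p,\beta) \in \mathcal{S}$; and we have separately checked $\tr B = 0$. This gives the ``$\subseteq$'' inclusion.

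Next I would establish ``$\supseteq$''. Suppose $c \in \R$, $B \in \M^{n\times n}$ with $\tr B = 0$, and $(p,\beta) \in \mathcal{S}$. By definition of $\mathcal{S}$ and Lemma \ref{lem-form} applied to $\begin{pmatrix} p & 0 \\ 0 & \beta\end{pmatrix}$, there are $t_- < 0 < t_+$ and $q, \gamma$ with $|q| = 1$, $\gamma \cdot q = 0$ such that $\begin{pmatrix} p & 0 \\ 0 & \beta\end{pmatrix} + t_\pm \eta \in F_\pm$ for every admissible $\eta$ as in the lemma. Since membership in $F_\pm$ is insensitive to the $(1,2)$ and $(2,1)$ blocks apart from the trace-zero constraint, and since $\tr(B + \frac{t_\pm}{b} q\otimes\gamma) = \tr B + \frac{t_\pm}{b}\gamma\cdot q = 0$, the very same data show $\begin{pmatrix} p & c \\ B & \beta\end{pmatrix} + t_\pm \eta \in F_\pm$. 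Applying Lemma \ref{lem-form} in the reverse direction, $\begin{pmatrix} p & c \\ B & \beta\end{pmatrix} \in R(F_0)$. This proves \eqref{rough-1}.

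The only real point requiring care — and the one I would flag as the main (minor) obstacle — is the bookkeeping around $\tr B$ under the rank-one perturbation: one must verify that adding $t_\pm \eta$ preserves trace-zero-ness of the $n\times n$ block precisely because $\gamma \cdot q = 0$, so that the off-diagonal blocks genuinely play no role. Everything else is a transcription of Lemma \ref{lem-form}. Since the argument is essentially identical to that of \cite[Lemma 5.3]{KY2} — the two types differing only in which intervals appear in the definition of $F_\pm$, which is immaterial to this decoupling — it is legitimate to omit the details, exactly as the paper does; the existence of $\mathcal{S}$ is all that is asserted, and no explicit description of $\mathcal{S}$ is needed here (that is deferred to the subsequent analysis).
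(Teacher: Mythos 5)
Your proof is correct and takes precisely the approach the paper intends: the paper omits the proof, pointing to \cite[Lemma~5.3]{KY2}, and your argument is the natural transcription of Lemma~\ref{lem-form} that that reference uses. The key observation — that membership of $\begin{pmatrix} p & c \\ B & \beta\end{pmatrix}$ in $F_\pm$ constrains $c$ and $B$ only through $\tr B = 0$, and that adding $t_\pm\eta$ preserves trace-zero-ness of the $n\times n$ block because $\gamma\cdot q = 0$ — is exactly the decoupling that makes the definition of $\mathcal S$ well posed, and both directions of the equality \eqref{rough-1} follow as you describe.
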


\subsubsection{\bf Selection of approximate collinear rank-one connections for $R(F_0)$.}
We first give a 2-dimensional description for the rank-one connections of diagonal components of matrices in $R(F_0)$ in a general form. The following lemma is common for both \textbf{Types  I} and  \textbf{II}.

\begin{lem}\label{lem-2d-rank}
For all positive numbers $a,b$ and $c$ with $b>a$, there exists a nonnegative continuous function $h(a,b,c;\cdot,\cdot,\cdot,\cdot,\cdot,\cdot)$ defined on
\[
I_{a,b,c}:=[0,a)\times\left[0,\frac{b-a}{2}\right)\times\left[0,\frac{b-a}{2}\right)\times[0,\infty)\times[0,c)\times[0,\infty)
\]
with $h(a,b,c;0,0,0,0,0,0)=0$
satisfying the following:

Let $\delta_{11},\delta_{12},\delta_{21},\delta_{22},\eta_1$ and $\eta_2$ be any positive numbers with
\[
0<a-\delta_{11}<a<a+\delta_{12}<\frac{a+b}{2}<b-\delta_{21},\quad 0<c-\eta_1,
\]
and let $R_1\in[a-\delta_{11},a+\delta_{12}]$, $R_2\in[b-\delta_{21},b+\delta_{22}]$, and $\tilde R_1,\tilde R_2\in[c-\eta_1,c+\eta_2]$.
Suppose $\theta\in[-\pi/2,\pi/2]$ and
\[
\begin{split}
\Big(\tilde R_1\big( & \cos(\frac{\pi}{2}+\theta),\sin(\frac{\pi}{2}+\theta)\big)-\tilde R_2\big(\cos(\frac{\pi}{2}-\theta),\sin(\frac{\pi}{2}-\theta)\big)\Big)\\
 & \cdot\Big(R_1\big(\cos(\theta-\frac{\pi}{2}),\sin(\theta-\frac{\pi}{2})\big)-R_2\big(\cos(\frac{\pi}{2}-\theta),\sin(\frac{\pi}{2}-\theta)\big)\Big)=0.
\end{split}
\]
Then $-\frac{\pi}{2}<\theta<\frac{\pi}{2}$, $\tilde R_1\ge \tilde R_2$, and
\[
\max\Big\{\big|(0,-a)-R_1\big(\cos(\theta-\frac{\pi}{2}),\sin(\theta-\frac{\pi}{2})\big)\big|,\,\big|(0,b)-R_2\big(\cos(\frac{\pi}{2}-\theta),\sin(\frac{\pi}{2}-\theta)\big)\big|,
\]
\[
\big|(0,c)-\tilde R_1\big(\cos(\frac{\pi}{2}+\theta),\sin(\frac{\pi}{2}+\theta)\big)\big|,\,\big|(0,c)-\tilde R_2\big(\cos(\frac{\pi}{2}-\theta),\sin(\frac{\pi}{2}-\theta)\big)\big|\Big\}
\]
\[
\le h(a,b,c;\delta_{11},\delta_{12},\delta_{21},\delta_{22},\eta_1,\eta_2).
\]
\end{lem}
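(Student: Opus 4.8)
The plan is to treat Lemma~\ref{lem-2d-rank} as a quantitative compactness statement about a single algebraic equation in the plane, and to prove it by first establishing the qualitative claims (the inequalities $-\pi/2<\theta<\pi/2$ and $\tilde R_1\ge\tilde R_2$) and then extracting the uniform modulus $h$ via a contradiction/compactness argument. First I would set up notation: write $e(\phi)=(\cos\phi,\sin\phi)$ and observe that the four points appearing in the statement are $P_1=R_1 e(\theta-\tfrac\pi2)$, $P_2=R_2 e(\tfrac\pi2-\theta)$, $Q_1=\tilde R_1 e(\tfrac\pi2+\theta)$, $Q_2=\tilde R_2 e(\tfrac\pi2-\theta)$, while the target points are $(0,-a)$, $(0,b)$, $(0,c)$, $(0,c)$ respectively; note $(0,-a)=a\,e(-\tfrac\pi2)$, $(0,b)=b\,e(\tfrac\pi2)$, $(0,c)=c\,e(\tfrac\pi2)$, so at the ``center'' parameters $\theta=0$, $R_1=a$, $R_2=b$, $\tilde R_1=\tilde R_2=c$ all four points coincide with their targets and the scalar equation is trivially satisfied. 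The equation itself is $(Q_1-Q_2)\cdot(P_1-P_2)=0$, i.e. the segment $Q_1Q_2$ is orthogonal to the segment $P_1P_2$; this is exactly the rank-one/orthogonality condition that will later encode that two diagonal $2\times2$ blocks differ by a rank-one matrix.

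Next I would handle the sign claims directly. Using $e(\tfrac\pi2-\theta)=(\sin\theta,\cos\theta)$ and $e(\tfrac\pi2+\theta)=(-\sin\theta,\cos\theta)$ and $e(\theta-\tfrac\pi2)=(-\sin\theta,-\cos\theta)$, compute $Q_1-Q_2=(-(\tilde R_1+\tilde R_2)\sin\theta,\ (\tilde R_1-\tilde R_2)\cos\theta)$ and $P_1-P_2=(-(R_1+R_2)\sin\theta,\ -(R_1+R_2)\cos\theta)$. Their inner product is $(\tilde R_1+\tilde R_2)(R_1+R_2)\sin^2\theta-(\tilde R_1-\tilde R_2)(R_1+R_2)\cos^2\theta$, so the equation reduces to
\[
(R_1+R_2)\Big[(\tilde R_1+\tilde R_2)\sin^2\theta-(\tilde R_1-\tilde R_2)\cos^2\theta\Big]=0.
\]
Since $R_1+R_2>0$ (as $R_1\ge a-\delta_{11}>0$ and $R_2\ge b-\delta_{21}>0$), this forces $(\tilde R_1+\tilde R_2)\sin^2\theta=(\tilde R_1-\tilde R_2)\cos^2\theta$. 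If $|\theta|=\tfrac\pi2$ then $\cos\theta=0$, so the left side is $\tilde R_1+\tilde R_2>0$ while the right side is $0$, a contradiction; hence $-\tfrac\pi2<\theta<\tfrac\pi2$ and $\cos\theta\ne0$. Dividing by $\cos^2\theta$ gives $(\tilde R_1+\tilde R_2)\tan^2\theta=\tilde R_1-\tilde R_2$, whose left side is nonnegative, forcing $\tilde R_1\ge\tilde R_2$. Moreover this last identity pins down $\tan^2\theta=(\tilde R_1-\tilde R_2)/(\tilde R_1+\tilde R_2)$, so $|\theta|$ is small when $\tilde R_1-\tilde R_2$ is small and $\tilde R_1,\tilde R_2$ are bounded below near $c>0$; this is the seed of the modulus.

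Finally, for the uniform estimate I would argue by contradiction and compactness. Fix $a,b,c$. Each of the four distances to be bounded is $\big|R_i\,e(\cdot)-(\text{target}_i)\big|$, a continuous function of the six deviation parameters $(\delta_{11},\delta_{12},\delta_{21},\delta_{22},\eta_1,\eta_2)$ together with the ``free'' variables $(R_1,R_2,\tilde R_1,\tilde R_2,\theta)$ constrained to lie in the compact product of intervals determined by those deviations and to satisfy the scalar equation. Define $h(a,b,c;\vec\delta,\vec\eta)$ to be the supremum of the maximum of the four distances over all admissible $(R_1,R_2,\tilde R_1,\tilde R_2,\theta)$; this is automatically the smallest function for which the conclusion holds. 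Nonnegativity is clear, and $h(a,b,c;0,\dots,0)=0$ because at all-zero deviations the only admissible configuration is the center point where every distance vanishes (using the sign analysis: $R_1=a$, $R_2=b$, $\tilde R_1=\tilde R_2=c$ forces $\tan^2\theta=0$, so $\theta=0$). Continuity of $h$ at the origin — the only place it is asserted — follows from a standard compactness argument: if $h$ did not tend to $0$ along some sequence $(\vec\delta^k,\vec\eta^k)\to0$, we could pick near-maximizing configurations, pass to a subsequential limit (the parameters live in shrinking intervals converging to the single center value, and $\theta$ is bounded), and obtain a limit configuration at zero deviation with positive distance, contradicting uniqueness of the center point; upper semicontinuity from shrinking feasible sets combined with this gives continuity at $0$. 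For continuity of $h$ at a general interior point one uses that the feasible set varies continuously (it is cut out by the strict interval constraints and one analytic equation whose gradient in $\theta$ is nonvanishing away from the degenerate locus, by the explicit formula above) — but since the lemma only claims the single value $h(a,b,c;0,\dots,0)=0$ and continuity is used only to make $h$ small for small deviations, the compactness argument at the origin suffices. The main obstacle is bookkeeping: one must check carefully that the interval constraints (e.g. $R_1\in[a-\delta_{11},a+\delta_{12}]$ with the ordering hypotheses $a-\delta_{11}<a<a+\delta_{12}<\tfrac{a+b}2<b-\delta_{21}$) genuinely collapse to the single center point as all deviations vanish and that no spurious branch of the equation $\tan^2\theta=(\tilde R_1-\tilde R_2)/(\tilde R_1+\tilde R_2)$ with $|\theta|$ close to $\tfrac\pi2$ can survive — but the computed identity rules this out since $\tan^2\theta$ is forced to be at most $(\tilde R_1-\tilde R_2)/(\tilde R_1+\tilde R_2)\le \eta_1+\eta_2$ over $2(c-\eta_1)$, which is small.
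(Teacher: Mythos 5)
Your approach is genuinely different from the paper's: the paper makes everything explicit (it derives a closed-form upper bound $g$ on $|\theta|$ and then writes explicit continuous majorants $h_1,h_2,h_3$, setting $h=\max_j h_j$), whereas you define $h$ abstractly as the supremum of the four distances over the feasible set and argue that $h$ is small near the origin by compactness. The abstract route is cleaner conceptually, but it buys you less: you don't get a formula, and you have to work to justify continuity.

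Two concrete issues. First, there is a sign error in your trigonometric identities: $e(\theta-\tfrac{\pi}{2})=(\cos(\theta-\tfrac{\pi}{2}),\sin(\theta-\tfrac{\pi}{2}))=(\sin\theta,-\cos\theta)$, not $(-\sin\theta,-\cos\theta)$. Consequently $P_1-P_2=((R_1-R_2)\sin\theta,\,-(R_1+R_2)\cos\theta)$, and the orthogonality equation becomes
\[
(\tilde R_1+\tilde R_2)(R_2-R_1)\sin^2\theta-(\tilde R_1-\tilde R_2)(R_1+R_2)\cos^2\theta=0,
\]
i.e.\ $\tan^2\theta=\dfrac{(\tilde R_1-\tilde R_2)(R_1+R_2)}{(\tilde R_1+\tilde R_2)(R_2-R_1)}$, not $\tan^2\theta=\dfrac{\tilde R_1-\tilde R_2}{\tilde R_1+\tilde R_2}$. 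You dropped the factor $(R_1+R_2)/(R_2-R_1)$, which is strictly greater than $1$, so your intermediate bound $\tan^2\theta\le(\eta_1+\eta_2)/(2(c-\eta_1))$ is not valid. The qualitative consequences ($|\theta|<\tfrac{\pi}{2}$, $\tilde R_1\ge\tilde R_2$) survive this error because both the correct and the incorrect equations force $\cos\theta\ne0$ and a sign on $\tilde R_1-\tilde R_2$ when $R_2>R_1$; and your supremum-definition of $h$ is agnostic to the exact formula, so the overall argument is not ruined. But the explicit $\tan^2\theta$ claim you make is wrong as stated, and anyone trying to extract a quantitative modulus from it would get a bound that is too small.

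Second, the lemma asserts that $h(a,b,c;\cdot)$ is a continuous function on all of $I_{a,b,c}$, and your proposal establishes only that your supremum-defined $h$ vanishes at the origin and is controlled along sequences tending to the origin. You acknowledge this and wave it off on the grounds that only the behavior near zero is used downstream; that is true of how the lemma is applied, but it is not what the lemma says, and a sup of continuous functions over a parameter-dependent feasible set is not automatically continuous. To close the gap you would either need to check directly that the feasible set varies continuously (in Hausdorff distance) with the parameters on the interior of $I_{a,b,c}$, or — more simply, and this is in effect what the paper does — replace the optimal $h$ by an explicit continuous majorant built from the correct bound on $|\theta|$, namely $g=\tan^{-1}\sqrt{(a+b+\delta_{12}+\delta_{22})(\eta_1+\eta_2)/\bigl(2(b-a-\delta_{12}-\delta_{21})(c-\eta_1)\bigr)}$ and the corresponding law-of-cosines expressions for the four distances.
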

\begin{proof}
By assumption,
\[
0=(\tilde R_1(-\sin\theta,\cos\theta)-\tilde R_2(\sin\theta,\cos\theta))\cdot(R_1(\sin\theta,-\cos\theta)-R_2(\sin\theta,\cos\theta))
\]
\[
=(-(\tilde R_1+\tilde R_2)\sin\theta,(\tilde R_1-\tilde R_2)\cos\theta)\cdot((R_1-R_2)\sin\theta,-(R_1+R_2)\cos\theta)
\]
\[
=(\tilde R_1+\tilde R_2)(R_2-R_1)\sin^2\theta-(\tilde R_1-\tilde R_2)(R_1+R_2)\cos^2\theta,
\]
that is,
\[
(\tilde R_1-\tilde R_2)(R_1+R_2)\cos^2\theta=(\tilde R_1+\tilde R_2)(R_2-R_1)\sin^2\theta;
\]
hence, $\theta\ne \pm\frac{\pi}{2}$, $\tilde R_1\ge \tilde R_2$, and
\[
\theta=\pm\tan^{-1}\left(\sqrt\frac{(\tilde R_1-\tilde R_2)(R_1+R_2)}{(\tilde R_1+\tilde R_2)(R_2-R_1)}\right).
\]
So
\[
\begin{split}
|\theta| & \le\tan^{-1}\left(\sqrt{\frac{(a+b+\delta_{12}+\delta_{22})(\eta_1+\eta_2)}{2(b-a-\delta_{12}-\delta_{21})(c-\eta_1)}}\right) \\
& =:g(a,b,c;\delta_{11},\delta_{12},\delta_{21},\delta_{22},\eta_1,\eta_2).
\end{split}
\]
Note that the function $g(a,b,c;\cdot,\cdot,\cdot,\cdot,\cdot,\cdot):I_{a,b,c}\to[0,\pi/2)$ is well-defined and continuous and that $g(a,b,c;\delta_{11},\delta_{12},\delta_{21},\delta_{22},\eta_1,\eta_2)=0$\, whenever
\[(\delta_{11},\delta_{12},\delta_{21},\delta_{22},\eta_1,\eta_2)\in I_{a,b,c},\;\;\eta_1=\eta_2=0.
\]

Observe now that
\[
|(0,-a)-R_1(\cos(\theta-\frac{\pi}{2}),\sin(\theta-\frac{\pi}{2}))|
\]
\[
\le\max\{|(0,-a)-(a+\delta_{12})(\sin\theta,-\cos\theta)|,
|(0,-a)-(a-\delta_{11})(\sin\theta,-\cos\theta)|\}
\]
\[
\begin{split}
=\max\Big\{ & \sqrt{(a+\delta_{12})^2\sin^2\theta+(a-(a+\delta_{12})\cos\theta)^2},\\
& \quad\quad\quad\quad \sqrt{(a-\delta_{11})^2\sin^2\theta+(a-(a-\delta_{11})\cos\theta)^2}\Big\}
\end{split}
\]
\[
\begin{split}
=\max\Big\{ & \sqrt{(a+\delta_{12})^2+a^2-2a(a+\delta_{12})\cos\theta},\\
& \quad\quad\quad\quad \sqrt{(a-\delta_{11})^2+a^2-2a(a-\delta_{11})\cos\theta}\Big\}
\end{split}
\]
\[
\begin{split}
\le\max\Big\{ & \sqrt{(a+\delta_{12})^2+a^2-2a(a+\delta_{12})\cos(g(a,b,c;\delta_{11},\delta_{12},\delta_{21},\delta_{22},\eta_1,\eta_2))},\\
&  \sqrt{(a-\delta_{11})^2+a^2-2a(a-\delta_{11})\cos(g(a,b,c;\delta_{11},\delta_{12},\delta_{21},\delta_{22},\eta_1,\eta_2))}\Big\}
\end{split}
\]
\[
=:h_{1}(a,b,c;\delta_{11},\delta_{12},\delta_{21},\delta_{22},\eta_1,\eta_2),
\]
\[
|(0,b)-R_2(\cos(\frac{\pi}{2}-\theta),\sin(\frac{\pi}{2}-\theta))|
\]
\[
\le\max\{|(0,b)-(b+\delta_{22})(\sin\theta,\cos\theta)|,
|(0,b)-(b-\delta_{21})(\sin\theta,\cos\theta)|\}
\]
\[
\begin{split}
=\max\Big\{ & \sqrt{(b+\delta_{22})^2\sin^2\theta+(b-(b+\delta_{22})\cos\theta)^2},\\
& \quad\quad\quad\quad \sqrt{(b-\delta_{21})^2\sin^2\theta+(b-(b-\delta_{21})\cos\theta)^2}\Big\}
\end{split}
\]
\[
\begin{split}
=\max\Big\{ & \sqrt{(b+\delta_{22})^2+b^2-2b(b+\delta_{22})\cos\theta},\\
& \quad\quad\quad\quad \sqrt{(b-\delta_{21})^2+b^2-2b(b-\delta_{21})\cos\theta}\Big\}
\end{split}
\]
\[
\begin{split}
\le\max\Big\{ & \sqrt{(b+\delta_{22})^2+b^2-2b(b+\delta_{22})\cos(g(a,b,c;\delta_{11},\delta_{12},\delta_{21},\delta_{22},\eta_1,\eta_2))},\\
&  \sqrt{(b-\delta_{21})^2+b^2-2b(b-\delta_{21})\cos(g(a,b,c;\delta_{11},\delta_{12},\delta_{21},\delta_{22},\eta_1,\eta_2))}\Big\}
\end{split}
\]
\[
=:h_{2}(a,b,c;\delta_{11},\delta_{12},\delta_{21},\delta_{22},\eta_1,\eta_2),
\]
\[
|(0,c)-\tilde R_1(\cos(\frac{\pi}{2}+\theta),\sin(\frac{\pi}{2}+\theta))|
\]
\[
\begin{split}
\le\max\Big\{ & \sqrt{(c+\eta_2)^2+c^2-2c(c+\eta_2)\cos(g(a,b,c;\delta_{11},\delta_{12},\delta_{21},\delta_{22},\eta_1,\eta_2))},\\
&  \sqrt{(c-\eta_1)^2+c^2-2c(c-\eta_1)\cos(g(a,b,c;\delta_{11},\delta_{12},\delta_{21},\delta_{22},\eta_1,\eta_2))}\Big\}
\end{split}
\]
\[
=:h_{3}(a,b,c;\delta_{11},\delta_{12},\delta_{21},\delta_{22},\eta_1,\eta_2),
\]
\[
|(0,c)-\tilde R_2(\cos(\frac{\pi}{2}-\theta),\sin(\frac{\pi}{2}-\theta))|\le h_{3}(a,b,c;\delta_{11},\delta_{12},\delta_{21},\delta_{22},\eta_1,\eta_2).
\]
Define
\[
h(a,b,c;\delta_{11},\delta_{12},\delta_{21},\delta_{22},\eta_1,\eta_2)=\max_{1\le j\le 3} h_{j}(a,b,c;\delta_{11},\delta_{12},\delta_{21},\delta_{22},\eta_1,\eta_2).
\]
Then it is easy to see that the function $h(a,b,c;\cdot,\cdot,\cdot,\cdot,\cdot,\cdot):I_{a,b,c}\to[0,\infty)$ is well-defined and satisfies the required properties.
\end{proof}

We now apply the previous lemma to  choose \emph{approximate} collinear rank-one connections for the diagonal components of matrices in $R(F_0)$.

\begin{thm}\label{lem-inv}
For each $0<r<\sigma(s_+)$, there exists a number $\mu_r>0$ with $0<r-\mu_r<r+\mu_r<\sigma(s_+)$ satisfying the following:

Let $0<\mu\le\mu_r$, and let $p_\pm\in\R^n$ satisfy
\[
s^2_-(r+\mu)<|p_-|<s^2_-(r-\mu)<s_+(r-\mu)<|p_+|<s_+(r+\mu),\;\;\mbox{\emph{(\textbf{Type I})}}
\]
\[
s^1_-(r-\mu)<|p_-|<s^1_-(r+\mu)<s_+(r-\mu)<|p_+|<s_+(r+\mu)\;\;\mbox{\emph{(\textbf{Type II})}}
\]
and
$(A(p_+)-A(p_-))\cdot(p_+-p_-)=0$. Then there exists a vector $\zeta^0\in\mathbb{S}^{n-1}$ such that, with $p^0_{+}=s_+(r)\zeta^0$, $p^0_{-}=-s^2_-(r)\zeta^0$ \emph{(\textbf{Type I})}, $p^0_{-}=-s^1_-(r)\zeta^0$ \emph{(\textbf{Type II})},
$A(p^0_\pm)=r\zeta^0$, we have
\[
\begin{split}
\max & \{|p^0_--p_-|,|p^0_+-p_+|,|A(p^0_-)-A(p_-)|,|A(p^0_+)-A(p_+)|\} \\
& \le h\big(s^2_-(r),s_+(r),r; s^2_-(r)-s^2_-(r+\mu),s^2_-(r-\mu)-s^2_-(r), \\
& \quad\quad\; s_+(r)-s_+(r-\mu),s_+(r+\mu)-s_+(r),\mu,\mu\big),\;\;\emph{(\textbf{Type I})}
\end{split}
\]
\[
\begin{split}
\max & \{|p^0_--p_-|,|p^0_+-p_+|,|A(p^0_-)-A(p_-)|,|A(p^0_+)-A(p_+)|\} \\
& \le h\big(s^1_-(r),s_+(r),r; s^1_-(r)-s^1_-(r-\mu),s^1_-(r+\mu)-s^1_-(r), \\
& \quad\quad\; s_+(r)-s_+(r-\mu),s_+(r+\mu)-s_+(r),\mu,\mu\big),\;\;\emph{(\textbf{Type II})}
\end{split}
\]
where $h$ is the function in Lemma \ref{lem-2d-rank}.
\end{thm}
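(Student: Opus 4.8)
The plan is to reduce the $n$-dimensional statement to the two-dimensional geometry of Lemma~\ref{lem-2d-rank} and then read off the estimate. First I would analyze the constraint $(A(p_+)-A(p_-))\cdot(p_+-p_-)=0$. Writing $A(p)=f(|p|^2)p=\frac{\sigma(|p|)}{|p|}p$, and recalling that for $|p_+|$ in the given range $A(p_+)=r_+\zeta_+$ with $r_+=\sigma(|p_+|)>0$, $\zeta_+=p_+/|p_+|$, and similarly $A(p_-)=-r_-\zeta_-$ with $r_-=-\sigma(|p_-|)>0$ (note $\sigma<0$ on $(0,s_0)$), $\zeta_-=p_-/|p_-|$, the vanishing inner product becomes a relation among the four unit vectors $\zeta_\pm$ and $p_\pm/|p_\pm|$ and the four scalars $|p_\pm|$, $r_\pm$. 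The key structural observation is that all four vectors $p_+,p_-,A(p_+),A(p_-)$ lie in the two-dimensional subspace $V=\operatorname{span}\{p_+,p_-\}$ (here one must handle the degenerate case $p_+\parallel p_-$ separately, where the orthogonality forces a contradiction with the sign pattern of $A\cdot p$ unless the configuration is already essentially collinear, making the estimate trivial). Choosing an orthonormal basis of $V$ so that the ``bisecting'' direction becomes the second coordinate axis, I would parametrize $p_+=R_2(\cos(\tfrac\pi2-\theta),\sin(\tfrac\pi2-\theta))$, $p_-=R_1(\cos(\theta-\tfrac\pi2),\sin(\theta-\tfrac\pi2))$ and $A(p_\pm)=\tilde R_{1,2}(\cos(\tfrac\pi2\pm\theta),\sin(\tfrac\pi2\pm\theta))$ for suitable $R_1,R_2,\tilde R_1,\tilde R_2>0$ and a single angle $\theta\in[-\pi/2,\pi/2]$; the point is that the monotonicity $\sigma'>0$ on $(s_-,s_0)\cup(s_0,\infty)$ guarantees $A(p)$ points in the same direction as $p$ with positive length $|\sigma(|p|)|$, so the angle between $A(p_+)$ and the axis is the negative of that between $A(p_-)$ and the axis once we place $p_-$ with a reflected sign — this is exactly the angular set-up of Lemma~\ref{lem-2d-rank}.

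Next I would choose $\mu_r>0$ so small that: the intervals $(s^2_-(r+\mu),s^2_-(r-\mu))$, $(s_+(r-\mu),s_+(r+\mu))$ (resp. their \textbf{Type II} analogues) are nonempty and lie in the admissible ranges; the quantities $a:=s^2_-(r)$ (resp. $s^1_-(r)$), $b:=s_+(r)$, $c:=r$ satisfy $b>a$ and $c>0$; and the perturbations $\delta_{11}=a-s^2_-(r+\mu)$, $\delta_{12}=s^2_-(r-\mu)-a$, $\delta_{21}=b-s_+(r-\mu)$, $\delta_{22}=s_+(r+\mu)-b$, together with $\eta_1=\eta_2=\mu$, satisfy the smallness constraints $0<a-\delta_{11}<a<a+\delta_{12}<\frac{a+b}{2}<b-\delta_{21}$ and $0<c-\eta_1$ required by Lemma~\ref{lem-2d-rank}. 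Here I use continuity and strict monotonicity of the functions $r\mapsto s^2_-(r),s^1_-(r),s_+(r)$, which follow from Hypothesis (NF)(ii). Set $R_1=|p_-|\in[a-\delta_{11},a+\delta_{12}]$, $R_2=|p_+|\in[b-\delta_{21},b+\delta_{22}]$, $\tilde R_1=|A(p_-)|=-\sigma(|p_-|)\in[c-\eta_1,c+\eta_2]$ and $\tilde R_2=|A(p_+)|=\sigma(|p_+|)\in[c-\eta_1,c+\eta_2]$ — the membership in these $\mu$-intervals is exactly the definition of $s^i_-(\cdot)$, $s_+(\cdot)$ applied to $r\pm\mu$.

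Then the orthogonality hypothesis of the theorem is precisely the hypothesis of Lemma~\ref{lem-2d-rank} (after the change of basis the inner-product condition $(A(p_+)-A(p_-))\cdot(p_+-p_-)=0$ becomes the displayed vanishing dot product of the two planar difference vectors). Applying Lemma~\ref{lem-2d-rank} yields $-\tfrac\pi2<\theta<\tfrac\pi2$, $\tilde R_1\ge\tilde R_2$, and the four distance bounds from the ``reference'' points $(0,-a)$, $(0,b)$, $(0,c)$, $(0,c)$ to $p_-,p_+,A(p_-),A(p_+)$, all by $h(a,b,c;\delta_{11},\delta_{12},\delta_{21},\delta_{22},\eta_1,\eta_2)$. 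Defining $\zeta^0\in\mathbb S^{n-1}$ to be the image in $V\subset\R^n$ of the second basis vector $(0,1)$, we get $p^0_+=b\zeta^0=s_+(r)\zeta^0$ identified with $(0,b)$, $p^0_-=-a\zeta^0$ identified with $(0,-a)$, and $A(p^0_\pm)=c\zeta^0=r\zeta^0$ identified with $(0,c)$ (using $\sigma(s_+(r))=r$ and $-\sigma(s^2_-(r))=r$, resp. $-\sigma(s^1_-(r))=r$); the four bounds are then exactly the claimed inequality. I expect the main obstacle to be the careful bookkeeping that reduces the genuinely $n$-dimensional orthogonality relation to the planar picture — in particular justifying that nothing is lost by restricting to $V=\operatorname{span}\{p_+,p_-\}$, handling the degenerate collinear case, and verifying that the sign conventions ($\sigma<0$ below $s_0$, the reflected placement of $p_-$) make the angles match the $(\tfrac\pi2\pm\theta)$ pattern of Lemma~\ref{lem-2d-rank} rather than some other configuration. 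The choice of $\mu_r$ and the verification of the domain constraints of $h$ is routine given Hypothesis (NF).
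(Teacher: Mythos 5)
Your proposal follows essentially the same route as the paper: reduce to the two-dimensional subspace $\Sigma_2$ spanned by $p_\pm$, set $\zeta^0$ to the normalized ``bisector'' direction $\frac{p_+}{|p_+|}-\frac{p_-}{|p_-|}$ (identified with $(0,1)$ under a suitable isometry), and feed $a=s^2_-(r)$ (resp.\ $s^1_-(r)$), $b=s_+(r)$, $c=r$, $R_1=|p_-|$, $R_2=|p_+|$, $\tilde R_1=-\sigma(|p_-|)$, $\tilde R_2=\sigma(|p_+|)$, $\eta_1=\eta_2=\mu$, and the evident $\delta$'s into Lemma~\ref{lem-2d-rank}, with $\mu_r$ chosen small enough that the domain constraints of $h$ hold. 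The only imprecision is your handling of $p_+\parallel p_-$: the orthogonality relation expands to $(\sigma(|p_+|)-\sigma(|p_-|))(|p_+|-|p_-|)=0$ when $p_\pm$ point the \emph{same} direction, which is a clean contradiction (rather than a ``sign pattern'' argument requiring the estimate to be ``trivial''); the antiparallel case is \emph{not} excluded but is simply the $\theta=0$ instance of the lemma, with $\Sigma_2$ taken to be any two-plane containing the common line.
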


\begin{proof}
Fix any $0<r<\sigma(s_+)$. Since
\[
\lim_{\mu\to 0^+}s^2_-(r-\mu)=s^2_-(r)<\frac{s^2_-(r)+s_+(r)}{2}<s_+(r)=\lim_{\mu\to 0^+}s_+(r-\mu),\;(\textbf{Type I})
\]
\[
\lim_{\mu\to 0^+}s^1_-(r+\mu)=s^1_-(r)<\frac{s^1_-(r)+s_+(r)}{2}<s_+(r)=\lim_{\mu\to 0^+}s_+(r-\mu),\;(\textbf{Type II})
\]
we can find a $\mu_r>0$ so small that $0<r-\mu_r<r+\mu_r<\sigma(s_+)$ and that for every $0<\mu\le\mu_r$, we have
\[
s^2_-(r-\mu) <\frac{s^2_-(r)+s_+(r)}{2}< s_+(r-\mu),\;\;(\textbf{Type I})
\]
\[
s^1_-(r+\mu) <\frac{s^1_-(r)+s_+(r)}{2}< s_+(r-\mu).\;\;(\textbf{Type II})
\]

Now, let $0<\mu\le\mu_r$, and let $p_\pm\in\R^n$ satisfy the conditions in the statement of the theorem.
Let $\Sigma_2$ denote the 2-dimensional linear subspace of $\R^n$ spanned by the two vectors $p_\pm$. (In  case of collinear  $p_\pm$, we choose $\Sigma_2$ to be any 2-dimensional linear space in $\R^n$ containing $p_\pm$.) From the orthogonality condition, we have
\[
\sigma(|p_+|)|p_+|+\sigma(|p_-|)|p_-|-\Big(\frac{\sigma(|p_+|)}{|p_+|}+\frac{\sigma(|p_-|)}{|p_-|}\Big)(p_+\cdot p_-)=0.
\]
If $p_\pm$ were pointing in the same direction, we would have $\sigma(|p_+|)=\sigma(|p_-|)$, a contradiction. Thus we can set
\[
\zeta^0=\frac{\frac{p_+}{|p_+|}-\frac{p_-}{|p_-|}}{\big|\frac{p_+}{|p_+|}-\frac{p_-}{|p_-|}\big|}\in\mathbb{S}^{n-1}\cap\Sigma_2.
\]
Since the vectors $p_\pm$, $A(p_\pm)$ and $\zeta^0$ all lie in $\Sigma_2$, we can recast the problem into the setting of the previous lemma via one of the two  linear isomorphisms of
$\Sigma_2$ onto $\R^2$ with correspondence $\zeta^0\leftrightarrow (0,1)\in\R^2.$ Then the result follows with the choices below in applying Lemma \ref{lem-2d-rank}: $a=s^2_-(r)$ (\textbf{Type I}), $a=s^1_-(r)$ (\textbf{Type II}), $b=s_+(r)$, $c=r$, $\delta_{11}=s^2_-(r)-s^2_-(r+\mu)$ (\textbf{Type I}), $\delta_{11}=s^1_-(r)-s^1_-(r-\mu)$ (\textbf{Type II}),
$\delta_{12}=s^2_-(r-\mu)-s^2_-(r)$ (\textbf{Type I}), $\delta_{12}=s^1_-(r+\mu)-s^1_-(r)$ (\textbf{Type II}), $\delta_{21}=s_+(r)-s_+(r-\mu)$, $\delta_{22}=s_+(r+\mu)-s_+(r)$, $\eta_1=\eta_2=\mu$,  $R_1=|p_-|$, $R_2=|p_+|$, $\tilde R_1=-\sigma(|p_-|)$,
$\tilde R_2=\sigma(|p_+|)$, and $\theta\in[0,\pi/2]$ is the half of the angle between $p_+$ and $-p_-$.
\end{proof}

\subsubsection{\bf Final characterization of $R(F_0)$.}
We are now ready to establish the  result concerning  the essential structure of $R(F_0)$ for both types. For our purpose, it is sufficient to stick only  to the diagonal components of matrices in $R(F_0)$.

\begin{thm}\label{thm:main2}
Let $0<r<\sigma(s_+)$. Then there exists a number $\mu_r'>0$  with $0<r-\mu_r'<r+\mu_r'<\sigma(s_+)$    such that for any $0<\mu\le \mu_r'$, the set $\mathcal{S}=\mathcal{S}_{r-\mu,r+\mu}\subset\R^{n+n}$ in \emph{(\ref{rough-1})}  satisfies the following:
\begin{itemize}
\item[(i)] $\sup_{(p,\beta)\in\mathcal S}|p|\le s_+(r+\mu)<s_+$ and   $\,\sup_{(p,\beta)\in\mathcal S}|\beta|\le r+\mu<\sigma(s_+)$; hence $\mathcal{S}$ is bounded.

\item[(ii)] $\mathcal{S}$ is open.

\item[(iii)] For each $(p_0,\beta_0)\in\mathcal{S},$ there exist an open set $\mathcal{V}\subset\subset\mathcal{S}$ containing $(p_0,\beta_0)$ and $C^1$ functions $q:\bar{\mathcal{V}}\to\mathbb{S}^{n-1}$, $\gamma:\bar{\mathcal{V}}\to\R^n$, $t_\pm:\bar{\mathcal{V}}\to\R$ with $\gamma\cdot q=0$ and $t_-<0<t_+$  on $\bar{\mathcal{V}}$ such that for every $\xi=\begin{pmatrix} p & c\\ B & \beta\end{pmatrix}\in R(F_0)=R(F_{0,r-\mu,r+\mu})$ with $(p,\beta)\in\bar{\mathcal{V}}$, we have
\[
\xi+t_\pm\eta\in F_\pm=F_{\pm,r-\mu,r+\mu},
\]
where $t_\pm=t_\pm(p,\beta)$, $\eta=\begin{pmatrix} q(p,\beta) & b\\ \frac{1}{b}\gamma(p,\beta)\otimes q(p,\beta) & \gamma(p,\beta)\end{pmatrix}$, and $b\neq 0$ is arbitrary.
\end{itemize}
\end{thm}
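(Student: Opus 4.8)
The plan is to produce $\mu_r'$ and verify the three properties by combining Lemma \ref{lem-rough} (which tells us $\mathcal S$ is the set of pairs $(p,\beta)$ appearing as diagonal data of matrices in $R(F_0)$), Lemma \ref{lem-form} (the normal-form description of such matrices via $t_-<0<t_+$, $|q|=1$, $\gamma\cdot q=0$), and Theorem \ref{lem-inv} (approximate collinear rank-one connections, which supplies the unit vector $\zeta^0$ and the reference data $p^0_\pm$, $A(p^0_\pm)=r\zeta^0$). First I would set $\mu_r'=\mu_r$ from Theorem \ref{lem-inv}, or shrink it slightly if the open/$C^1$ construction in (iii) forces it. Property (i) is immediate: by Lemma \ref{lem-rough} every $(p,\beta)\in\mathcal S$ comes from a matrix in $R(F_0)=R(F_{0,r-\mu,r+\mu})$, so $|p|$ lies in $(s_-^2(r+\mu),s_-^2(r-\mu))\cup(s_+(r-\mu),s_+(r+\mu))$ and $\beta=A(p)$ on the $F_+$ branch while on the $F_-$ branch $\beta$ is the corresponding interpolated flux; in all cases $|p|\le s_+(r+\mu)$ and $|\beta|\le r+\mu$ by monotonicity of $\sigma$ and the definitions of $s_\pm(\cdot)$, $s_-^{1,2}(\cdot)$. (The $\beta$ bound uses that $\beta$ lies on the rank-one segment between $A(p_-)$ and $A(p_+)$, whose norms are $\le r+\mu$.)

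Property (ii), openness of $\mathcal S$, is the place where I expect to do real work. The natural approach: given $(p_0,\beta_0)\in\mathcal S$, pick $p_\pm^0$, $\zeta^0$, $t_\pm^0$ realizing it as in Lemma \ref{lem-form}/Theorem \ref{lem-inv}, and then perturb. The cleanest route is to parametrize: a diagonal datum $(p,\beta)\in\mathcal S$ iff there exist $t_-<0<t_+$, $q\in\mathbb S^{n-1}$, $\gamma\perp q$ with $p+t_\pm q\in$ (annular regions) and $A(p+t_+q)-A(p+t_-q)$ collinear with $t_+-t_-$ times a vector, more precisely $\beta+t_\pm\gamma=A(p+t_\pm q)$. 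Eliminating $\gamma$ and $\beta$, the real constraint is the scalar equation $(A(p+t_+q)-A(p+t_-q))\cdot q\cdot(\text{something})$— actually the condition that $A(p+t_+q)-A(p+t_-q)$ be parallel to $(t_+-t_-)$ direction, i.e. $\frac{A(p+t_+q)-A(p+t_-q)}{t_+-t_-}$ determines $\gamma$ once we know it is $\perp$-consistent. I would make this an implicit-function-theorem argument in the variables $(t_-,t_+,q)$ near the reference solution, using Hypothesis (NF)(ii) ($\sigma'>0$ on $(s_-,s_0)\cup(s_0,\infty)$ and $\sigma'<0$ on $(0,s_-)$, which makes the relevant Jacobian nondegenerate) to solve for $(t_-,t_+,q)$ as $C^1$ functions of $(p,\beta)$. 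This simultaneously gives (iii): the functions $q(p,\beta)$, $t_\pm(p,\beta)$ are exactly the IFT outputs, $\gamma(p,\beta):=\big(A(p+t_+q)-\beta\big)/t_+ = \big(A(p+t_-q)-\beta\big)/t_-$ is then $C^1$ and automatically $\perp q$, and the membership $\xi+t_\pm\eta\in F_\pm$ for the specified $\eta$ (any $b\ne0$) is precisely Lemma \ref{lem-form} read backwards, with the annular conditions $|p+t_\pm q|\in(s_-^{2}(r+\mu),s_-^2(r-\mu))$ resp. $(s_+(r-\mu),s_+(r+\mu))$ (Type I, analogously Type II) preserved on a small enough $\mathcal V\subset\subset\mathcal S$ by continuity. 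Shrinking $\mathcal V$ to be compactly contained in $\mathcal S$ is harmless since $\mathcal S$ is open.

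The main obstacle is verifying the nondegeneracy of the IFT system — i.e. checking that the map $(t_-,t_+,q)\mapsto\big(p+t_-q,\ p+t_+q,\ \text{collinearity defect}\big)$ has surjective derivative onto the $(p,\beta)$-directions at the reference point. Here the one-dimensionality of the flux's angular behavior is crucial: because $A(p)=\sigma(|p|)p/|p|$ and the reference connection is collinear along $\zeta^0$, the problem reduces (as in the proof of Theorem \ref{lem-inv}, passing to the $2$-plane $\Sigma_2$) to a planar computation where the Jacobian determinant is a product of $\sigma'$ values at $s_-^{2}(r)$ (or $s_-^1(r)$) and $s_+(r)$ times a geometric factor, all nonzero by Hypothesis (NF). I would carry this out in $\Sigma_2$-coordinates and then note the normal directions to $\Sigma_2$ are handled freely since $q$ may rotate. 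An alternative to the IFT, if the Jacobian bookkeeping proves unwieldy, is a direct open-mapping argument: exhibit, for each target $(p,\beta)$ near $(p_0,\beta_0)$, an explicit nearby collinear-type connection using Theorem \ref{lem-inv} with the roles of ``exact'' and ``approximate'' interchanged, then straighten it; but this is essentially the same computation. I would also double-check that the two branches of $F_0$ (the $F_-$ annulus and the $F_+$ annulus) do not interact — a matrix in $R(F_0)$ has its ``$-$'' endpoint in $F_-$ and ``$+$'' endpoint in $F_+$ by the definition of $R(F_0)$, so $(p,\beta)$ lying on the segment is unambiguous and $\mathcal S$ is genuinely open without any boundary subtlety between the annuli.
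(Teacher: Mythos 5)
Your overall plan matches the paper's: part (i) by interpolation bounds, and parts (ii)--(iii) by an Implicit Function Theorem argument built on Lemma \ref{lem-form}, with the smallness of $\mu_r'$ tied to Theorem \ref{lem-inv}. Two issues, one small, one substantial.

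The small one: in your sketch of (i), you first claim that $|p|$ lies in the two annuli $(s_-^2(r+\mu),s_-^2(r-\mu))\cup(s_+(r-\mu),s_+(r+\mu))$ and that $\beta=A(p)$ on one branch. Neither is true for a generic point of $\mathcal S$: by Lemma \ref{lem-rough} the diagonal $(p,\beta)$ is an interpolation $\lambda(p_+,A(p_+))+(1-\lambda)(p_-,A(p_-))$ with $\xi_\pm\in F_\pm$, so $p$ itself need not lie in either annulus and $\beta$ is not a value of $A$. Your parenthetical correction (``$\beta$ lies on the rank-one segment\ldots'') is the right argument and the conclusion is fine, but the first half of that paragraph should simply be cut.

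The substantial one: your treatment of the Jacobian nondegeneracy is where the real proof lives, and your description of what the Jacobian determinant looks like is not correct. You write that it should be ``a product of $\sigma'$ values at $s_-^{2}(r)$ (or $s_-^1(r)$) and $s_+(r)$ times a geometric factor, all nonzero by Hypothesis (NF).'' That is not the structure. After eliminating the $\gamma'$ and $s'$ blocks by row/column operations, the invertibility of $D_{(\gamma',q',s')}F$ reduces to that of an $n\times n$ matrix of the form
\[
B=I_n+\textrm{(three rank-one terms involving } \tfrac{p+q'}{|p+q'|}\otimes\tfrac{p+q'}{|p+q'|},\ \tfrac{p+s'q'}{|p+s'q'|}\otimes\tfrac{p+s'q'}{|p+s'q'|},\ \omega^-_{s'}\otimes\omega^+_{s'}\textrm{)},
\]
which is not invertible for generic data; its determinant is not a product of $\sigma'$ values. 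The key observation is that at the \emph{collinear} reference configuration supplied by Theorem \ref{lem-inv} (where $p+q'$, $p+s'q'$, $q'$ are all parallel to $\zeta^0$ and $\gamma'=0$), all three rank-one terms collapse to multiples of $\zeta^0\otimes\zeta^0$, so $B=I_n+c\,\zeta^0\otimes\zeta^0$, and the invertibility reduces to the single scalar condition $c\neq -1$, which in turn is equivalent to $\sigma'(-\bar l)\neq0$. This only holds at the collinear reference, and the point of shrinking $\mu$ to $\mu_r'$ and of the uniform estimate in Theorem \ref{lem-inv} is precisely to transfer this bound to the actual, approximately collinear configuration via uniform continuity of the determinant on a compact set. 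In your sketch the role of Theorem \ref{lem-inv} as a quantitative closeness estimate (not just an existence statement) is not clearly in play, and without it the IFT hypothesis is unverified. That gap is where your proposal would actually stall.
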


\begin{proof}
Fix any $0<r<\sigma(s_+)$. First, we let $\mu>0$ be any number with $0<r-\mu <r+\mu <\sigma(s_+)$ and prove (i). Then we choose later an upper bound $\mu_r'$   of $\mu$ for the validity of (ii) and (iii) above.

We divide the proof into several steps.

1. To show that (i) holds, choose any $(p,\beta)\in\mathcal S$. By Lemma \ref{lem-rough}, $\xi:=\begin{pmatrix} p & 0\\ O & \beta\end{pmatrix}\in R(F_0)$, where $O$ is the $n\times n$ zero matrix. By the definition of $R(F_0)$, there exist two matrices $\xi_\pm=\begin{pmatrix} p_\pm & c_\pm\\ B_\pm & \sigma(|p_\pm|)\frac{p_\pm}{|p_\pm|}\end{pmatrix}\in F_\pm$ and a number $0<\lambda<1$ such that
$
\xi=\lambda\xi_++(1-\lambda)\xi_-.
$
So
\[
|p|=|\lambda p_++(1-\lambda)p_-|\le s_+(r+\mu),
\]
\[
|\beta|=\left|\lambda\sigma(|p_+|)\frac{p_+}{|p_+|}+(1-\lambda)\sigma(|p_-|) \frac{p_-}{|p_-|}\right|\le r+\mu \, ;
\]
hence, $\sup_{(p,\beta)\in\mathcal S}|p|\le s_+(r+\mu)$ and  $\sup_{(p,\beta)\in\mathcal S}|\beta|\le r+\mu.$ Thus, $\mathcal S$ is bounded, and (i) is proved.

2. We now turn to the remaining assertions that the set $\mathcal S=\mathcal S_{r-\mu,r+\mu}$ fulfills (ii) and (iii) for all sufficiently small $\mu>0$.  In this step, we still assume $\mu>0$ is any fixed number with $0<r-\mu <r+\mu <\sigma(s_+)$.

Let $(p_0,\beta_0)\in\mathcal S$.
Since $\xi_0:=\begin{pmatrix} p_0 & 0\\ O & \beta_0\end{pmatrix}\in R(F_0)$, it follows from Lemma \ref{lem-form} that there exist numbers $t_0<0<s_0$ and vectors $q_0,\,\gamma_0\in\R^n$ with $|q_0|=1$, $\gamma_0\cdot q_0=0$ such that $\xi_0+t_0\eta_0\in F_-$ and $\xi_0+s_0\eta_0\in F_+$, where $\eta_0=\begin{pmatrix} q_0 & b\\ \frac{1}{b}q_0\otimes\gamma_0 & \gamma_0\end{pmatrix}$ and $b\neq 0$ is any fixed number. Let $q'_0=t_0q_0\ne 0$, $\gamma'_0=t_0\gamma_0$, and $s'_0=s_0/t_0<0$; then
\begin{equation}\label{thm:main2-1}
\begin{cases}
\gamma'_0\cdot q'_0=0,\quad s_+(r-\mu)<|p_0+s'_0q'_0|<s_+(r+\mu),\\
s^2_-(r+\mu)<|p_0+q'_0|<s^2_-(r-\mu),\quad\mbox{(\textbf{Type I})}\\
s^1_-(r-\mu)<|p_0+q'_0|<s^1_-(r+\mu),\quad\mbox{(\textbf{Type II})}\\
\sigma(|p_0+s'_0q'_0|)\frac{p_0+s'_0q'_0}{|p_0+s'_0q'_0|}=\beta_0+s'_0\gamma'_0,\\
\sigma(|p_0+q'_0|)\frac{p_0+q'_0}{|p_0+q'_0|}=\beta_0+\gamma'_0.\end{cases}
\end{equation}
Observe also that
\begin{equation}\label{thm:main2-2}
\begin{split}
&s_0-t_0\ge |(p_0+s_0q_0)|-|(p_0+t_0q_0)|>s_+(r-\mu)-s^2_-(r-\mu),\;\;\mbox{(\textbf{Type I})}\\
&s_0-t_0\ge |(p_0+s_0q_0)|-|(p_0+t_0q_0)|>s_+(r-\mu)-s^1_-(r+\mu).\;\;\mbox{(\textbf{Type II})}
\end{split}
\end{equation}

Next, consider the function $F$ defined by
\begin{equation*}
F(\gamma',q',s';p,\beta) = \begin{pmatrix}  \sigma(|p+s'q'|)\frac{p+s'q'}{|p+s'q'|}-\beta-s'\gamma'  \\
 \sigma(|p+q'|)\frac{p+q'}{|p+q'|}-\beta-\gamma' \\
  \gamma'\cdot q'\end{pmatrix} \in\R^{n+n+1}
\end{equation*}
for all $\gamma',\,q',\,p,\,\beta\in\R^n$ and $s'\in\R$ with  $s_+(r-\mu)<|p+s'q'|<s_+(r+\mu)$, $s^2_-(r+\mu)<|p+q'|<s^2_-(r-\mu)$ (\textbf{Type I}), $s^1_-(r-\mu)<|p+q'|<s^1_-(r+\mu)$ (\textbf{Type II}). Then  $F$ is $C^1$ in the described open subset of $\R^{n+n+1+n+n}$, and the observation (\ref{thm:main2-1}) yields that
\[
F(\gamma'_0,q'_0,s'_0;p_0,\beta_0)=0.
\]

Suppose for the moment that the Jacobian matrix $D_{(\gamma',q',s')}F$ 
is invertible at the point $(\gamma'_0,q'_0,s'_0;p_0,\beta_0)$; then the Implicit Function Theorem implies the following: There exist a bounded domain $\tilde{\mathcal{V}}=\tilde{\mathcal{V}}_{(p_0,\beta_0)}\subset\R^{n+n}$ containing $(p_0,\beta_0)$ and $C^1$ functions $\tilde\gamma,\,\tilde{q}\in\R^n$, $\tilde s\in\R$ of $(p,\beta)\in\tilde{\mathcal{V}}$ such that
\[
\tilde{\gamma}(p_0,\beta_0)=\gamma'_0,\;\; \tilde{q}(p_0,\beta_0)=q'_0,\;\; \tilde{s}(p_0,\beta_0)=s'_0
\]
and that
\[
\tilde s(p,\beta)<0,\;\; s_+(r-\mu)<|p+\tilde s(p,\beta)\tilde q(p,\beta)|<s_+(r+\mu),
\]
\[
s^2_-(r+\mu)<|p+\tilde q(p,\beta)|<s^2_-(r-\mu),\quad\mbox{(\textbf{Type I})}
\]
\[
s^1_-(r-\mu)<|p+\tilde q(p,\beta)|<s^1_-(r+\mu),\quad\mbox{(\textbf{Type II})}
\]
\[
F(\tilde\gamma(p,\beta),\tilde q(p,\beta),\tilde s(p,\beta);p,\beta)=0\quad \forall(p,\beta)\in\tilde{\mathcal{V}}.
\]
Define functions
\[
\gamma=-\frac{\tilde\gamma}{|\tilde q|}, \;\; q=-\frac{\tilde q}{|\tilde q|},\;\; t_+=-\tilde s |\tilde q|,\;\; t_-=-|\tilde q| \quad\mbox{in $\tilde{\mathcal{V}}$};
\]
then
\[
s_+(r-\mu)<|p+ t_+q|<s_+(r+\mu),
\]
\[
s^2_-(r+\mu)<|p+t_- q|<s^2_-(r-\mu),\quad\mbox{(\textbf{Type I})}
\]
\[
s^1_-(r-\mu)<|p+t_- q|<s^1_-(r+\mu),\quad\mbox{(\textbf{Type II})}
\]
\[
\sigma(|p+t_\pm q|)\frac{p+t_\pm q}{|p+t_\pm q|}=\beta+t_\pm \gamma,\;\; |q|=1,\;\; \gamma\cdot q=0,\;\; t_-<0<t_+,
\]
where $(p,\beta)\in\tilde{\mathcal{V}}$, $\gamma=\gamma(p,\beta)$, $q=q(p,\beta),$  and $t_\pm=t_\pm(p,\beta)$.

Let $(p,\beta)\in\tilde{\mathcal V}$, $B\in\mathbb{M}^{n\times n}$, $\tr B=0$, $b,c\in\R$, $b\neq 0$, $q=q(p,\beta)$, $\gamma=\gamma(p,\beta)$, $t_\pm=t_\pm(p,\beta)$, $\xi=\begin{pmatrix} p & c\\ B & \beta\end{pmatrix}$, and $\eta=\begin{pmatrix} q & b\\ \frac{1}{b}\gamma\otimes q & \gamma\end{pmatrix}$. Then $\xi_\pm:=\xi+t_\pm\eta\in F_\pm$. By the definition of $R(F_0)$, $\xi\in(\xi_-,\xi_+)\subset R(F_0)$. By Lemma \ref{lem-rough}, we thus have $(p,\beta)\in\mathcal{S}$; hence $\tilde{\mathcal{V}}_{(p_0,\beta_0)}=\tilde{\mathcal{V}}\subset\mathcal S$. This proves that $\mathcal S$ is open. Choosing any open set $\mathcal V\subset\subset\tilde{\mathcal{V}}$ with $(p_0,\beta_0)\in\mathcal{V}$,  the assertion  (iii) will hold.

3. In this step, we continue Step 2 to deduce an equivalent condition for the invertibility of the Jacobian matrix $D_{(\gamma',q',s')}F$  at  $(\gamma'_0,q'_0,s'_0;p_0,\beta_0)$. By direct computation,
\[
D_{(\gamma',q',s')}F= \begin{pmatrix} -s'I_n & M_{s'} & \omega^-_{s'} \\ -I_n & M_1 & 0  \\ q' & \gamma' & 0  \end{pmatrix}\in\mathbb{M}^{(n+n+1)\times(n+n+1)},
\]
where $I_n$ is the $n\times n$ identity matrix,
\[
M_{s'}=s'\left (\sigma'(|p+s'q'|)-\frac{\sigma(|p+s'q'|)}{|p+s'q'|}\right )\frac{p+s'q'}{|p+s'q'|}\otimes \frac{p+s'q'}{|p+s'q'|}+s'\frac{\sigma(|p+s'q'|)}{|p+s'q'|}I_n,
\]
\[
\omega^\pm_{s'}=\left (\sigma'(|p+s'q'|)-\frac{\sigma(|p+s'q'|)}{|p+s'q'|}\right )\left (\frac{p+s'q'}{|p+s'q'|}\cdot q'\right )\frac{p+s'q'}{|p+s'q'|}+\frac{\sigma(|p+s'q'|)}{|p+s'q'|}q'\pm\gamma'.
\]
Here the prime only in $\sigma'$ denotes the derivative.
For notational simplicity,  we write $(\gamma',q',s';p,\beta)=(\gamma'_0,q'_0,s'_0;p_0,\beta_0)$.
Applying suitable elementary row operations, as $s'<0$, we have
\[
D_{(\gamma',q',s')}F \,\to\, \begin{pmatrix} -s'I_n & M_{s'} & \omega^-_{s'} \\ O & M_1-\frac{1}{s'}M_{s'} & -\frac{1}{s'}\omega^-_{s'} \\ 0 & \gamma'+\frac{q'_1}{s'}(M_{s'})^1+\cdots+\frac{q'_n}{s'}(M_{s'})^n & \frac{1}{s'}q'\cdot\omega^-_{s'}\end{pmatrix}
\]
\[
\to\, \begin{pmatrix} -s'I_n & M_{s'} & \omega^-_{s'} \\ O & s'M_1-M_{s'} & -\omega^-_{s'} \\ 0 & s'\gamma'+q'_1(M_{s'})^1+\cdots+q'_n(M_{s'})^n & q'\cdot\omega^-_{s'}\end{pmatrix},
\]
where $O$ is the $n\times n$ zero matrix, and $(M_{s'})^i$ is the $i$th row of $M_{s'}$. Since $|q'|=-t_0$, $\gamma'\cdot q'=0$, and $s_+(r-\mu)<|p+s'q'|<s_+(r+\mu)$, we have 
\[
q'\cdot\omega^-_{s'}=\left (\sigma'(|p+s'q'|)-\frac{\sigma(|p+s'q'|)}{|p+s'q'|}\right ) \left (\frac{p+s'q'}{|p+s'q'|}\cdot q'\right )^2+\frac{\sigma(|p+s'q'|)}{|p+s'q'|}t_0^2
\]
\[
=t_0^2\left(\cos^2\theta'\sigma'(|p+s'q'|)+(1-\cos^2\theta')\frac{\sigma(|p+s'q'|)}{|p+s'q'|}\right)>0,
\]
where $\theta'\in[0,\pi]$ is the angle between $p+s'q'$ and $q'$. 
After some elementary column operations to the last matrix from the above row operations, we obtain
\[
D_{(\gamma',q',s')}F \,\to\,  \begin{pmatrix} -s'I_n & M_{s'}-N_{s'} & \omega^-_{s'} \\ O & s'M_1-M_{s'}+N_{s'} & -\omega^-_{s'} \\ 0 & 0 & q'\cdot\omega^-_{s'} \end{pmatrix},
\]
where the $j$th column of  $N_{s'}\in\mathbb{M}^{n\times n}$ is
$\frac{s'\gamma'_j+q'\cdot(M_{s'})_j}{q'\cdot\omega^-_{s'}}\omega^-_{s'}$. So $D_{(\gamma',q',s')}F$ is invertible if and only if the $n\times n$ matrix  $M_1-\frac{1}{s'}M_{s'}+\frac{1}{s'}N_{s'}$ is invertible. We compute
\[
M_1-\frac{1}{s'}M_{s'}+\frac{1}{s'}N_{s'} = \left(\sigma'(|p+q'|)-\frac{\sigma(|p+q'|)}{|p+q'|}\right)\frac{p+q'}{|p+q'|}\otimes \frac{p+q'}{|p+q'|}
\]
\[
+\frac{\sigma(|p+q'|)}{|p+q'|}I_n-\left(\sigma'(|p+s'q'|)-\frac{\sigma(|p+s'q'|)}{|p+s'q'|}\right )\frac{p+s'q'}{|p+s'q'|}\otimes \frac{p+s'q'}{|p+s'q'|}
-\frac{\sigma(|p+s'q'|)}{|p+s'q'|}I_n
\]
\[
+\frac{1}{q'\cdot\omega^-_{s'}}\omega^-_{s'}\otimes \left [\gamma'
+\left(\sigma'(|p+s'q'|)-\frac{\sigma(|p+s'q'|)}{|p+s'q'|}\right)\left(\frac{p +s'q'}{|p+s'q'|}\cdot q'\right)\frac{p+s'q'}{|p+s'q'|}+\frac{\sigma(|p+s'q'|)}{|p+s'q'|}q'\right]
\]
\[
\begin{split}
=(a_1-a_{s'})I_n & +(b_1-a_1)\frac{p+q'}{|p+q'|}\otimes\frac{p+q'}{|p+q'|}\\
& -(b_{s'}-a_{s'})
\frac{p+s'q'}{|p+s'q'|}\otimes\frac{p+s'q'}{|p+s'q'|}
+
\frac{1}{q'\cdot\omega^-_{s'}}\omega^-_{s'}\otimes\omega^+_{s'},
\end{split}
\]
where
\[
a_{l}=\frac{\sigma(|p+lq'|)}{|p+lq'|},\quad b_{l}=\sigma'(|p+lq'|)\quad\mbox{for $l=s',1$}.
\]
Since $a_1<0<a_{s'}$, we can set
\[
\begin{split} B=B_{(p_0,\beta_0)}=&\frac{1}{a_1-a_{s'}}(M_1-\frac{1}{s'}M_{s'}+\frac{1}{s'}N_{s'})\\
= &
I_n+\frac{b_1-a_1}{a_1-a_{s'}}\frac{p+q'}{|p+q'|}\otimes\frac{p+q'}{|p+q'|}\\
 &-\frac{b_{s'}-a_{s'}}{a_1-a_{s'}}
\frac{p+s'q'}{|p+s'q'|}\otimes\frac{p+s'q'}{|p+s'q'|} +
\frac{1}{(a_1-a_{s'})q'\cdot\omega^-_{s'}}\omega^-_{s'}\otimes\omega^+_{s'};\end{split}
\]
then
$D_{(\gamma',q',s')}F$ is invertible if and only if the matrix
$B\in\mathbb{M}^{n\times n}$ is invertible.

4. To close the arguments in Step 2 and thus to finish the proof, we choose a suitable $\mu_r'>0$ with $0<r-\mu_r'<r+\mu_r'<\sigma(s_+)$ in such a way that for each $0<\mu\le\mu_r'$, the matrix $B=B_{(p_0,\beta_0)}$, determined through Steps 2 and 3 for any given $(p_0,\beta_0)\in\mathcal S=\mathcal S_{r-\mu,r+\mu}$, is invertible.

First, let $0<\mu\le\mu_r'\le\mu_r$, where the number $\mu_r>0$ is determined by Theorem \ref{lem-inv}.
By Hypothesis (NF),
\[
\frac{\sigma(l)}{l}<0<\frac{\sigma(k)}{k}\quad\forall  k\in[s_+(r-\mu),s_+(r+\mu)],
\]
\[
\forall l\in[s^2_-(r+\mu),s^2_-(r-\mu)]\;\;\mbox{(\textbf{Type I})},\;\; \forall l\in[s^1_-(r-\mu),s^1_-(r+\mu)]\;\;\mbox{(\textbf{Type II})}.
\]
So we can define a real-valued continuous function (to express the determinant of the matrix $B=B_{(p_0,\beta_0)}$ from Step 3)
\[
\mbox{DET}(v,u,q,\gamma)=\det\Big(I_n+\frac{\sigma'(|u|)-\frac{\sigma(|u|)}{|u|}}
{\frac{\sigma(|u|)}{|u|}-\frac{\sigma(|v|)}{|v|}}\frac{u}{|u|}\otimes\frac{u}{|u|}
-\frac{\sigma'(|v|)-\frac{\sigma(|v|)}{|v|}}
{\frac{\sigma(|u|)}{|u|}-\frac{\sigma(|v|)}{|v|}}\frac{v}{|v|}\otimes\frac{v}{|v|}
\]
\[
+\frac{1}{(\frac{\sigma(|u|)}{|u|}-\frac{\sigma(|v|)}{|v|})((\sigma'(|v|)-\frac{\sigma(|v|)}{|v|})
(\frac{v}{|v|}\cdot q)^2 + \frac{\sigma(|v|)}{|v|})}
\big((\sigma'(|v|)-\frac{\sigma(|v|)}{|v|})(\frac{v}{|v|}\cdot q)\frac{v}{|v|}
\]
\[
+\frac{\sigma(|v|)}{|v|}q-\gamma \big)\otimes \big((\sigma'(|v|)-\frac{\sigma(|v|)}{|v|})(\frac{v}{|v|}\cdot q)\frac{v}{|v|}
+\frac{\sigma(|v|)}{|v|}q+\gamma \big)\Big)
\]
on the compact set $\mathcal M$ of points $(v,u,q,\gamma)\in\R^n\times\R^n\times\mathbb{S}^{n-1}\times\R^n$ with
\[
|v|\in[s_+(r-\mu),s_+(r+\mu)],\;\;|\gamma|\le 1,
\]
\[
|u|\in[s^2_-(r+\mu),s^2_-(r-\mu)]\;\;\mbox{(\textbf{Type I})},\;\;|u|\in[s^1_-(r-\mu),s^1_-(r+\mu)]\;\;\mbox{(\textbf{Type II})}.
\]
With $\bar k=s_+(r)$ and $\bar l=-s^2_-(r)$ (\textbf{Type I}), $\bar l=-s^1_-(r)$ (\textbf{Type II}), for each $q\in\mathbb{S}^{n-1},$
\[
\mbox{DET}(\bar k q,\bar lq,q,0)=\det\Big(I_n+\frac{\sigma'(-\bar l)-\frac{\sigma(-\bar l)}{-\bar l}+\frac{\sigma(\bar k)}{\bar k}}{\frac{\sigma(-\bar l)}{-\bar l}-\frac{\sigma(\bar k)}{\bar k}}q\otimes q \Big)\ne 0,
\]
since $\sigma'(-\bar l)\ne 0$ and hence the fraction in front of $q\otimes q$ is different from $-1$. So
\[
d:=\min_{q\in\mathbb{S}^{n-1}}|\mbox{DET}(\bar kq,\bar lq,q,0)|>0.
\]

Next, choose a number $\rho>0$ such that for all $(v,u,q,\gamma),(\tilde v,\tilde u,\tilde q,\tilde \gamma)\in\mathcal M$ with
$|v-\tilde v|,\,|u-\tilde u|,\,|q-\tilde q| ,\,|\gamma-\tilde\gamma|<\rho$, we have
\begin{equation}\label{thm:main2-3}
|\mbox{DET}(v,u,q,\gamma)-\mbox{DET}(\tilde v,\tilde u,\tilde q,\tilde \gamma)|<d/2.
\end{equation}
Let $\mu_r'\in(0,\mu_r]$ be sufficiently small so that for all $0<\mu\le\mu_r'$,
\[
\begin{split}
h\big( & s^2_-(r),s_+(r),r; s^2_-(r)-s^2_-(r+\mu),s^2_-(r-\mu)-s^2_-(r), \\
&  s_+(r)-s_+(r-\mu),s_+(r+\mu)-s_+(r),\mu,\mu\big)<\tau,\;\; (\textbf{Type I})
\end{split}
\]
\[
\begin{split}
h\big( & s^1_-(r),s_+(r),r; s^1_-(r)-s^1_-(r-\mu),s^1_-(r+\mu)-s^1_-(r), \\
&  s_+(r)-s_+(r-\mu),s_+(r+\mu)-s_+(r),\mu,\mu\big)<\tau,\;\; (\textbf{Type II})
\end{split}
\]
where $h$ is the function in Theorem \ref{lem-inv}, and
\[
\tau:=\min\{\rho,\rho(s_+(r-\mu_r)-s^2_-(r-\mu_r))/4\},\;\;\mbox{(\textbf{Type I})}
\]
\[
\tau:=\min\{\rho,\rho(s_+(r-\mu_r)-s^1_-(r+\mu_r))/4\}.\;\;\mbox{(\textbf{Type II})}
\]

Now, fix  any $\mu\in(0,\mu_r']$, and let $B=B_{(p_0,\beta_0)}$ be the $n\times n$ matrix determined through Steps 2 and 3 in terms of any given $(p_0,\beta_0)\in\mathcal{S}=\mathcal{S}_{r-\mu,r+\mu}$. Let $p_+=p_0+s_0 q_0$ and $p_-=p_0+t_0 q_0$ from Step 2; then $p_\pm$ and $A(p_\pm)$ fulfill the conditions in Theorem \ref{lem-inv}. So this theorem implies that there exists a vector $\zeta^0\in\mathbb{S}^{n-1}$ such that
\[
\max\{|p^0_--p_-|,|p^0_+-p_+|,|A(p^0_-)-A(p_-)|,|A(p^0_+)-A(p_+)|\}<\tau,
\]
where $p^0_+=\bar{k}\zeta^0$, $p^0_-=\bar{l}\zeta^0$, and $A(p_\pm^0)=r\zeta^0$. Using (\ref{thm:main2-1}) and (\ref{thm:main2-2}),
\[
|p_+ -\bar{k}\zeta^0|<\rho,\;\;|p_- -\bar{l}\zeta^0|<\rho,
\]
\[
|q_0-\zeta^0|=|\frac{p_+-p_-}{s_0-t_0}-\zeta^0|\le\frac{|(p_+-p_-)-(\bar{k}-\bar{l})\zeta^0|+|(\bar{k}-\bar{l})-(s_0-t_0)|}{s_0-t_0}
\]
\[
\le \frac{2\tau+||p_+^0-p_-^0|-|p_+-p_-||}{s_0-t_0}<\frac{4\tau}{s_0-t_0}<\rho,
\]
\[
|\gamma_0|=|\frac{A(p_+)-A(p_-)}{s_0-t_0}|\le\frac{|A(p_+)-A(p_+^0)|+|A(p_-^0)-A(p_-)|}{s_0-t_0}<\rho.
\]
Since $\det(B)=\mbox{DET}(p_+,p_-,q_0,\gamma_0)$ and $|\mbox{DET}(\bar{k}\zeta^0,\bar{l}\zeta^0,\zeta^0,0)|\ge d$, it  follows from (\ref{thm:main2-3}) that
\[
|\det(B)|>d/2>0.
\]

The proof is now complete.
\end{proof}

\subsection{Relaxation of  $\nabla \omega(z)\in K_0$}

The following result is important for the convex integration with a linear constraint; its proof can be found in \cite[Lemma 4.5]{KY1}.

\begin{lem}\label{approx-lem} Let $\lambda_1,\lambda_2>0$ and $\eta_1=-\lambda_1\eta, \; \eta_2=\lambda_2\eta$ with
\[
\eta=\begin{pmatrix} q &  b\\\frac{1}{b}\gamma\otimes q & \gamma\end{pmatrix},\quad |q|=1,\; \gamma\cdot q=0,\;   b\ne 0.
\]
Let $G\subset \R^{n+1}$ be a bounded domain. Then for each $\epsilon>0$, there exists a function $\omega=(\varphi,\psi)\in C_c^{\infty}(\R^{n+1};\R^{1+n})$ with $\mathrm{supp}(\omega)\subset\subset G$  that satisfies the following properties:

(a) \; $\dv \psi =0$  in $G$,

(b) \; $|\{z\in G\;|\; \nabla\omega(z)\notin \{\eta_1,\;\eta_2\}\}|<\epsilon,$

(c) \; $\dist(\nabla \omega(z),[\eta_1,\eta_2])<\epsilon$ for all $z\in G,$

(d) \; $\|\omega\|_{L^\infty(G)}<\epsilon,$

(e) \; $\int_{\R^n}\varphi(x,t)\,dx=0$ for all $t\in\R$.

\end{lem}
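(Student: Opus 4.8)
The plan is to produce $\omega$ from a single one-dimensional rounded saw-tooth in the rank-one direction, encoding the two exact linear requirements (a) and (e) through suitable potential representations. Write $c_1=-\lambda_1<0<c_2=\lambda_2$, so that $\eta_1=c_1\eta$, $\eta_2=c_2\eta$, $0\in(c_1,c_2)$, and $[\eta_1,\eta_2]=\{t\eta:t\in[c_1,c_2]\}$ has barycentre $0$ with weights $\lambda_2/(\lambda_1+\lambda_2)$ on $\eta_1$ and $\lambda_1/(\lambda_1+\lambda_2)$ on $\eta_2$. Put $\nu=(q,b)\in\R^{n+1}$, so $\eta=a\otimes\nu$ with $a=(1,\gamma/b)\in\R^{1+n}$; note $|\nu|\ge1$ and $\eta\neq0$. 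For a small scale $\varrho>0$ and a much smaller scale $\delta$, I would take $W=W_{\varrho,\delta}\in C^\infty(\R)$ to be the $\varrho$-periodic, mean-zero function whose derivative equals $c_1$ on a sub-interval of relative length $\lambda_2/(\lambda_1+\lambda_2)$ of each period and $c_2$ on a sub-interval of relative length $\lambda_1/(\lambda_1+\lambda_2)$, interpolated monotonically inside $[c_1,c_2]$ over two rounding intervals of length $\delta$. Then $W'\in[c_1,c_2]$ everywhere, $\|W\|_{L^\infty(\R)}\le C\varrho$, its mean-zero antiderivative $\widetilde W$ satisfies $\|\widetilde W\|_{L^\infty(\R)}\le C\varrho^2$, and $|\{s\in J:W'(s)\notin\{c_1,c_2\}\}|\le C(\delta/\varrho)|J|$ for every interval $J$.

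The model profile is $\omega_0=(\varphi_0,\psi_0)$ with $\varphi_0(z)=W(\nu\cdot z)$ and $\psi_0(z)=\tfrac1b W(\nu\cdot z)\gamma$; using $\gamma\cdot q=0$ one gets at once $\nabla\omega_0(z)=W'(\nu\cdot z)\,\eta$ and $\dv_x\psi_0=\tfrac1b(\gamma\cdot q)W'=0$. To cut $\omega_0$ down to $G$ without breaking the exact constraints, fix an open $G'\subset\subset G$ with $|G\setminus G'|<\epsilon/2$ and a cutoff $\chi\in C_c^\infty(G;[0,1])$ with $\chi\equiv1$ on $G'$. For the vector part, let $\Pi\subset\R^n$ be the plane of $q$ and $\gamma$ (if $\gamma=0$ just set $\psi\equiv0$), put $\Phi(x,t)=\tfrac{|\gamma|}{b}\widetilde W(\nu\cdot z)\,\chi(x,t)$, and let $\psi$ be the field obtained by a $90^\circ$ rotation of $\nabla_\Pi\Phi$ within $\Pi$, with orientation chosen so that $\psi=\psi_0$ where $\chi=1$, extended by $0$ in the directions orthogonal to $\Pi$. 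Then $\dv_x\psi\equiv0$ on all of $\R^{n+1}$ since $\psi$ is a planar curl in $\Pi$, $\mathrm{supp}\,\psi\subset\mathrm{supp}\,\chi\subset\subset G$, and $\psi=\psi_0$ on $G'$. For the scalar part, using $|q|=1$ one has $W(\nu\cdot z)=\dv_x(\widetilde W(\nu\cdot z)\,q)$, so I set $\varphi:=\dv_x\big(\chi(x,t)\,\widetilde W(\nu\cdot z)\,q\big)=\chi\,W(\nu\cdot z)+(\nabla_x\chi\cdot q)\,\widetilde W(\nu\cdot z)$; then $\int_{\R^n}\varphi(x,t)\,dx=0$ for every $t$ by the divergence theorem, $\mathrm{supp}\,\varphi\subset\mathrm{supp}\,\chi\subset\subset G$, and $\varphi=W(\nu\cdot z)$ on $G'$. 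Thus (a), (e) and the support condition hold exactly, and $\omega=(\varphi,\psi)\in C_c^\infty(\R^{n+1};\R^{1+n})$ by construction, so no final mollification is needed.

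It then remains to check (b)--(d), which I would do with the parameters fixed in the order $\epsilon\mapsto(G',\chi)\mapsto\varrho\mapsto\delta$. On $G'$ one has $\nabla\omega=W'(\nu\cdot z)\,\eta$, which always lies in $[\eta_1,\eta_2]$ and lies in $\{\eta_1,\eta_2\}$ off the rounding set; by the density estimate for $W'$ and Fubini along the direction $\nu$, the set $\{z\in G':\nabla\omega(z)\notin\{\eta_1,\eta_2\}\}$ has measure $\le C(G)(\delta/\varrho)$, so (b) follows once $\delta$ is small, together with $|G\setminus G'|<\epsilon/2$. Away from $G'$, expanding the two divergence-form expressions by the Leibniz rule gives $\nabla\omega=\chi\,W'(\nu\cdot z)\,\eta+E(z)$ where every entry of $E$ is a derivative of $\chi$ of order $\le2$ times $W$ or $\widetilde W$, so $\|E\|_{L^\infty}\le C(G,\chi)\,\varrho$; since $\chi\,W'\in[c_1,c_2]$, the leading term lies in $[\eta_1,\eta_2]$, whence $\dist(\nabla\omega(z),[\eta_1,\eta_2])\le C(G,\chi)\,\varrho$ for all $z$, giving (c), and the same bound gives $\|\omega\|_{L^\infty}\le C(G,\chi)\,\varrho$, which is (d); both hold once $\varrho$ is chosen small.

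The main obstacle is exactly the compatibility of the three exact requirements --- $\dv_x\psi=0$, $\int_{\R^n}\varphi\,dx=0$, and $\mathrm{supp}\,\omega\subset\subset G$ --- with the approximate ones (b), (c): a naive truncation of $\omega_0$ destroys $\dv_x\psi=0$, and restoring it by an arbitrary corrector would generically spoil the pointwise bound (c). The device that removes this difficulty is to represent $\psi$ as a planar curl in the plane of $q$ and $\gamma$ (legitimate precisely because $\gamma\perp q$) and $\varphi$ in the divergence form $\dv_x(\,\cdot\,q)$, so that (a) and (e) become automatic from a single scalar cutoff; since both potentials involve only the very small antiderivative $\widetilde W$, the localization error is $O(\varrho)$ and is absorbed by choosing the oscillation scale $\varrho$, and then the rounding scale $\delta$, last. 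The remaining estimates are routine.
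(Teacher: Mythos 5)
Your construction is correct and is the standard single-step laminate argument: a rounded sawtooth $W(\nu\cdot z)$ in the rank-one direction $\eta=a\otimes\nu$, with $\varphi$ written as $\dv_x\bigl(\chi\,\widetilde W\,q\bigr)$ so the zero-average condition (e) holds exactly, and $\psi$ realized as a planar curl in the plane of $q$ and $\gamma$ so that $\dv\psi=0$ holds exactly, the point being that $0\in[-\lambda_1,\lambda_2]$ lets the cutoff $\chi\in[0,1]$ keep $\chi W'\eta$ in the segment $[\eta_1,\eta_2]$ while the localization errors are $O(\varrho)$ since $\|W\|_{L^\infty}=O(\varrho)$ and $\|\widetilde W\|_{L^\infty}=O(\varrho^2)$ once $\chi$ is fixed, after which $\varrho$ and then $\delta$ are chosen. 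The paper does not reproduce the proof but cites \cite[Lemma 4.5]{KY1}; your argument is the expected content of that reference and closes all of (a)--(e).
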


We now state  the relaxation theorem for  homogeneous differential inclusion $\nabla \omega(z)\in K_0$  in a form that is suitable for a later use; we  restrict the inclusion
 only to  the diagonal components.

\begin{thm}\label{main-lemma}  Let $0<r <\sigma(s_+)$, and let $0<\mu\le\mu_r'$ for some number $\mu_r'>0$ with $0<r-\mu_r'<r+\mu_r'<\sigma(s_+)$ from Theorem \ref{thm:main2}. Let  $\mathcal K$ be a compact subset of\, $\mathcal S=\mathcal S_{r-\mu,r+\mu}$, and let $\tilde Q\times \tilde I$ be  a  box  in $\R^{n+1}.$  Then given any $\epsilon>0$, there exists a $\delta>0$ such that for each box $Q\times I\subset \tilde Q\times \tilde I$, point  $(p,\beta)\in\mathcal K$,  and  number $\rho>0$ sufficiently small, there exists a function $\omega=(\varphi,\psi)\in C^\infty_{c} (Q\times I;\R^{1+n})$  satisfying the following properties:

(a) \; $\dv \psi =0$  in $Q\times I$,

(b) \; $(p'+D\varphi(z), \beta'+ \psi_t(z))\in \mathcal{S}$ for all $z \in Q\times I$ and $|(p',\beta')-(p,\beta)|\leq \delta,$

(c) \; $\|\omega\|_{L^\infty(Q\times I)}<\rho,$

(d) \; $\int_{Q\times I} |\beta+\psi_t(z)-A(p+D\varphi(z))|dz<\epsilon {|Q\times I|}/{|\tilde Q\times \tilde I|},$

(e) \; $\int_{Q\times I} \mathrm{dist}((p+D\varphi(z),\beta+\psi_t(z)),\mathcal A)dz<\epsilon {|Q\times I|}/{|\tilde Q\times \tilde I|},$

(f) \; $\int_{Q} \varphi(x,t)dx=0$  for all $t\in I,$

(g) \; $\|\varphi_t\|_{L^\infty(Q\times I)}<\rho,$

\noindent where $\mathcal A=\mathcal A_{r-\mu,r+\mu}\subset \R^{n+n}$ is the set defined by
\[
\mathcal A = \big\{(p',A(p')) \,|\, |p'|\in [s^2_-(r+\mu),s^2_-(r-\mu)]\cup[s_+(r-\mu),s_+(r+\mu)] \big\},\;(\textbf{\emph{Type I}})
\]
\[
\mathcal A = \big\{(p',A(p')) \,|\, |p'|\in [s^1_-(r-\mu),s^1_-(r+\mu)]\cup[s_+(r-\mu),s_+(r+\mu)] \big\}.\;(\textbf{\emph{Type II}})
\]
\end{thm}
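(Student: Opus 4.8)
The plan is to obtain $\omega=(\varphi,\psi)$ by an \emph{iterated convex integration} (inductive "staircase") construction on the box $Q\times I$, using Theorem~\ref{thm:main2}(iii) to supply, at each matrix encountered, a fixed rank-one direction $\eta$ with $\xi+t_\pm\eta\in F_\pm$, and then applying the linearly constrained one-step lemma, Lemma~\ref{approx-lem}, on suitable subboxes. First I would fix $(p,\beta)\in\K$ and, since $\K\subset\subset\mathcal S$, cover $\K$ by finitely many open sets $\mathcal V$ as in Theorem~\ref{thm:main2}(iii), each carrying $C^1$ data $q,\gamma,t_\pm$; choose $\delta>0$ small enough (uniformly over $\K$, by compactness) that the $\delta$-neighborhood of $(p,\beta)$ lies in one such $\mathcal V\subset\subset\mathcal S$. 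This $\delta$ will be the one advertised in the statement, and it also guarantees property (b) provided every intermediate value $(p+D\varphi(z),\beta+\psi_t(z))$ that the construction produces stays inside $\mathcal V$ — which I enforce by keeping the $L^\infty$ increments of $D\varphi,\psi_t$ controlled at each stage.

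The core iteration proceeds as follows. At a generic stage we have a piecewise-affine-gradient map whose gradient takes finitely many values $\xi_k=\begin{pmatrix}p_k & c_k\\ B_k & \beta_k\end{pmatrix}$ with $(p_k,\beta_k)\in\bar{\mathcal V}$; on the region where the gradient equals $\xi_k$ we invoke Theorem~\ref{thm:main2}(iii) to get $\eta=\eta(p_k,\beta_k)$ and $t_-<0<t_+$ with $\xi_k+t_\pm\eta\in F_\pm$, set $\lambda_1=-t_-,\ \lambda_2=t_+$, and apply Lemma~\ref{approx-lem} on subboxes exhausting that region up to small measure. The output perturbation $\tilde\omega=(\tilde\varphi,\tilde\psi)$ satisfies $\dv\tilde\psi=0$ (feeding (a)), has $\int_{Q}\tilde\varphi\,dx=0$ slice-wise (feeding (f)), has $L^\infty$-norm as small as we like (feeding (c)), has $\|\nabla\tilde\omega\|$ pointwise within $\epsilon$ of the segment $[\eta_1,\eta_2]$ and equal to one of the endpoints off a small set. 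After this step the gradient of $\varphi$ (resp.\ time-derivative of $\psi$) moves the pair $(p,\beta)$ toward $F_-\cup F_+$ in the "diagonal" coordinates, i.e.\ toward $\mathcal A$: indeed $\xi_k\pm$ the endpoint perturbations land exactly in $F_\pm$, whose diagonal components have $|p'|$ in the prescribed annuli and second block $A(p')$. Iterating and summing a fast-decaying sequence of error tolerances, the measure of the bad set and the $L^1$-distance of $(p+D\varphi,\beta+\psi_t)$ to $\mathcal A$ both go to zero, giving (d) (since on $\mathcal A$ the second component is exactly $A$ of the first, so $|\beta+\psi_t-A(p+D\varphi)|$ is controlled by the distance to $\mathcal A$ plus the modulus of continuity of $A$) and (e). Property (g), the bound $\|\varphi_t\|_{L^\infty}<\rho$, is the one genuinely extra constraint beyond a standard convex-integration scheme; I would get it by noting that each $\eta$ from Theorem~\ref{thm:main2}(iii) has a free parameter $b\neq0$ in its $(1,n{+}1)$ entry and $(n{+}1,\cdot)$ row, while the $(1,n{+}1)$ entry is exactly what contributes to $\varphi_t$, so by taking $|b|$ large along the construction the $t$-derivative contribution of each building block is made arbitrarily small; the cap functions of Lemma~\ref{approx-lem} are then chosen with correspondingly small time-oscillation. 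One must check this is compatible with keeping $\dv\psi=0$ and the slice-integral condition, which it is, since those constraints are insensitive to $b$.

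The main obstacle is the uniform control needed to close the scheme: one must choose $\delta$ (depending only on $\K,\tilde Q\times\tilde I,\epsilon$) \emph{before} seeing the box $Q\times I$ or the point $(p,\beta)$, yet guarantee that throughout the entire infinite iteration the running pair $(p+D\varphi(z),\beta+\psi_t(z))$ never exits the fixed neighborhood $\mathcal V\subset\subset\mathcal S$ on which the $C^1$ selection of $(q,\gamma,t_\pm)$ is valid — otherwise Theorem~\ref{thm:main2}(iii) no longer applies at the next stage. This is handled by a careful bookkeeping: the total $L^\infty$ displacement of $\nabla\omega$ is bounded by $\sum_k \lambda_k\|\eta^{(k)}\|$ where $\lambda_k$ are the convex-combination weights and these can be forced to sum to something smaller than $\operatorname{dist}(\bar{\mathcal V},\partial\mathcal S)$, using that $t_\pm$ and $\eta$ are bounded on the compact set $\bar{\mathcal V}$ (here the boundedness in Theorem~\ref{thm:main2}(i) and the continuity of the selection are essential). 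The smallness of $\rho$ required in the statement ("$\rho>0$ sufficiently small") is precisely what lets the first increment — and hence, geometrically decaying, all of them — fit inside this budget. Once this containment is secured, properties (a)--(g) follow by passing to the $C^0$-limit of the iteration and invoking standard lower-semicontinuity/convergence arguments exactly as in \cite{KY1}.
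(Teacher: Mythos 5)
Your proposal diverges from the paper's proof in two ways, one of which is a genuine error and the other a structural mismatch that would, as written, produce a function with the wrong regularity.

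The error is in your mechanism for property (g). You observe correctly that the $(1,n{+}1)$ entry of $\eta$ is the free parameter $b$ and that this entry is exactly what feeds into $\varphi_t$. But you then propose to take $|b|$ \emph{large} to make the $t$-derivative contribution small, which is backwards: since the output of Lemma~\ref{approx-lem} has $\nabla\omega$ concentrated near $\{-\lambda_1\eta,\lambda_2\eta\}$, we get $\varphi_t\approx -\lambda_1 b$ or $\lambda_2 b$, so large $|b|$ makes $\|\varphi_t\|_{L^\infty}$ \emph{large}. The paper does the opposite: it picks $b_i$ with $0<b_i<\min_{\bar{\mathcal B}_i}\rho/(t_{i,+}-t_{i,-})$, which gives $|\varphi_t|<(t_{i,+}-t_{i,-})|b_i|+\rho<2\rho$. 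The factor $1/b$ sits only in the lower-left $n\times n$ block $D\psi$, which is not constrained in the theorem, so letting $b$ be small costs nothing.

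The structural mismatch is that you cast the construction as an infinite iterated convex-integration scheme with a passage to a $C^0$-limit. But the theorem asserts $\omega\in C^\infty_c(Q\times I;\R^{1+n})$, and a limit of such an iteration will generically only be Lipschitz (or $C^1$ at best), not $C^\infty$; this is exactly why the paper proves a \emph{relaxation} statement (properties (d)--(e) are $L^1$-closeness to $\mathcal A$, not exact inclusion) rather than exact membership in $K_0$, and applies Lemma~\ref{approx-lem} \emph{once}. The single application already gives a $C^\infty_c$ function and, combined with the $\tau$-shrinking of the segment $[\xi_{i,-},\xi_{i,+}]$ to $[\xi_{i,-}^\tau,\xi_{i,+}^\tau]\subset\subset R(F_0)$, both the containment (b) and the $L^1$ estimates (d)--(e) follow directly. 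The hard business of upgrading $L^1$-closeness to exact inclusion is deferred to the Baire-category argument of Theorem~\ref{thm:main}, not done here. Your worry about keeping the running state inside a fixed $\mathcal V$ across infinitely many stages therefore does not arise in the paper's proof; it is resolved at the outset by choosing $\delta<d_\tau/2$ where $d_\tau=\mathrm{dist}(H_\tau,\partial|_{\Sigma_0}R(F_0))$, a one-shot estimate. I'd suggest reworking the argument as a single-step lamination along the lines above.
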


\begin{proof}
By Theorem \ref{thm:main2}, there exist finitely many open balls $\mathcal{B}_1,\cdots,\mathcal{B}_N\subset\subset\mathcal S$ covering $\mathcal K$ and $C^1$ functions $q_i:\bar{\mathcal{B}}_i\to \mathbb{S}^{n-1}$, $\gamma_i:\bar{\mathcal{B}}_i\to \R^{n}$, $t_{i,\pm}:\bar{\mathcal{B}}_i\to \R$ $(1\le i\le N)$ with $\gamma_i\cdot q_i=0$ and $t_{i,-}<0<t_{i,+}$ on $\bar{\mathcal B}_i$  such that for each $\xi=\begin{pmatrix} p & c \\ B & \beta\end{pmatrix}\in R(F_0)=R(F_{0,r-\mu,r+\mu})$ with $(p,\beta)\in\bar{\mathcal{B}}_i$, we have
\[
\xi+t_{i,\pm} \eta_i\in F_\pm=F_{\pm,r-\mu,r+\mu},
\]
where $t_{i,\pm}=t_{i,\pm}(p,\beta)$, $\eta_i=\begin{pmatrix} q_i(p,\beta) & b \\ \frac{1}{b}\gamma_i(p,\beta)\otimes q_i(p,\beta) & \gamma_i(p,\beta)\end{pmatrix}$, and $b\neq 0$ is arbitrary.

Let $1\le i\le N$. We write $\xi_i=\xi_i(p,\beta)=\begin{pmatrix} p & 0\\O & \beta\end{pmatrix}\in R(F_0)$ for $(p,\beta)\in\bar{\mathcal B}_{i}\subset\mathcal S$, where $O$ is  the $n\times  n$ zero matrix. We omit the dependence on $(p,\beta)\in\bar{\mathcal B}_{i}$ in the following whenever it is clear from the context. Given any $\rho>0$, we choose a constant $b_i=b_{i,\rho}$ with
\[
0<b_i<\min_{\bar{\mathcal{B}}_{i}} \frac{\rho}{t_{i,+}-t_{i,-}}.
\]
With this choice of  $b=b_i$, let $\eta_{i}$ be defined on $\bar{\mathcal{B}}_i$ as above. Then
\[
\xi_{i,\pm}=\begin{pmatrix} p_{i,\pm} & c_{i,\pm}\\B_{i,\pm} & \beta_{i,\pm}\end{pmatrix}:=\xi_i+t_{i,\pm}\eta_{i}\in F_\pm,
\]
\[
\xi_i=\lambda_i \xi_{i,+} + (1-\lambda_i)\xi_{i,-},\quad\lambda_i=\frac{-t_{i,-}}{t_{i,+}-t_{i,-}}\in(0,1)\quad \textrm{on}\;\;\bar{\mathcal{B}}_{i}.
\]
By the definition of $R(F_0)$,  on $\bar{\mathcal B}_i$,  both $\xi_{i,-}^\tau=\tau \xi_{i,+} + (1-\tau)\xi_{i,-}$ and $\xi_{i,+}^\tau=(1-\tau) \xi_{i,+} +  \tau \xi_{i,-}$ belong to $R(F_0)$ for all $\tau\in (0,1)$.  Let $0<\tau<\min_{1\le j\le N}\min_{\bar{\mathcal B}_j}\min\{\lambda_j,1-\lambda_j\}\le \frac12$ be a small number to be selected later.   Let $\lambda'_i=\frac{\lambda_i-\tau}{1-2\tau}$ on $\bar{\mathcal B}_i$. Then $\lambda'_i\in (0,1)$ and
$\xi_i=\lambda'_i\xi_{i,+}^\tau+(1-\lambda'_i)\xi_{i,-}^\tau$ on $\bar{\mathcal B}_i$.
Moreover, on $\bar{\mathcal B}_i$, $\xi_{i,+}^\tau-\xi_{i,-}^\tau=(1-2\tau)(\xi_{i,+} -\xi_{i,-} )$ is rank-one, $[\xi_{i,-}^\tau,\xi_{i,+}^\tau]\subset(\xi_{i,-},\xi_{i,+})\subset R(F_0)$, and
\[
c\tau\leq|\xi_{i,+}^\tau-\xi_{i,+}|=|\xi_{i,-}^\tau-\xi_{i,-}|=\tau |\xi_{i,+} -\xi_{i,-}|=\tau(t_{i,+}-t_{i,-})|\eta_i|\leq C\tau,
\]
where $C=\max_{1\le j\le N}\max_{\bar{\mathcal B}_j}(t_{j,+}-t_{j,-})|\eta_j|\geq\min_{1\le j\le N}\min_{\bar{\mathcal B}_j}(t_{j,+}-t_{j,-})|\eta_j|$ $=c>0.$ By continuity, $H_\tau=\bigcup_{(p,\beta)\in\bar{\mathcal{B}}_j,1\le j\le N}[\xi_{j,-}^\tau(p,\beta),\xi_{j,+}^\tau(p,\beta)]$ is a compact subset of $R(F_0)$, where $R(F_0)$ is open in the space
\[
\Sigma_0=\left\{\begin{pmatrix} p & c\\B & \beta\end{pmatrix}\;\Big|\; \mathrm{tr}B=0\right\},
\]
by Lemma \ref{lem-rough} and Theorem \ref{thm:main2}.
So $d_\tau=\mathrm{dist}(H_\tau,\partial|_{\Sigma_0}R(F_0))>0$, where $\partial|_{\Sigma_0}$ is the relative boundary in  $\Sigma_0$.

Let $\eta_{i,1}=-\lambda_{i,1}\eta_i=-\lambda'_i(1-2\tau)(t_{i,+}-t_{i,-})\eta_i,\,\eta_{i,2}= \lambda_{i,2}\eta_i=(1-\lambda'_i)(1-2\tau)(t_{i,+}-t_{i,-})\eta_i$ on $\bar{\mathcal B}_i$, where $\lambda_{i,1}=\tau(-t_{i,+})+(1-\tau)(-t_{i,-})>0,\,\lambda_{i,2}=(1-\tau)t_{i,+}+\tau t_{i,-}>0$  on $\bar{\mathcal B}_i$, and $\tau>0$ is so small that
\[
\min_{1\le j\le N}\min_{\bar{\mathcal B}_j}\lambda_{j,k}>0\;\;(k=1,\,2).
\]
Applying  Lemma \ref{approx-lem} to matrices $\eta_{i,1}=\eta_{i,1}(p,\beta),\,\eta_{i,2}=\eta_{i,2}(p,\beta)$ (also depending on $\rho$) for a fixed $(p,\beta)\in\bar{\mathcal{B}}_i$ and to a given box $G= Q\times I$, we obtain that for each $\rho>0$, there exist
a function $\omega=(\varphi,\psi)\in C^\infty_c( Q\times I;\R^{1+n})$  and an open set $G_\rho\subset\subset  Q\times I$ satisfying the following conditions:
\begin{equation}\label{approx-1}
\begin{cases} \mbox{(1) \;  $\dv \psi =0$  in $Q\times I$,}\\
\mbox{(2) \; $|(Q\times I) \setminus G_\rho|<\rho$;\; $\xi_i+\nabla  \omega(z)\in \{\xi^\tau_{i,-},\;\xi_{i,+}^\tau\}$ for all $z\in  G_\rho$,}\\
\mbox{(3) \; $\xi_i+\nabla  \omega(z)\in [\xi_{i,-}^\tau,\xi_{i,+}^\tau]_\rho$ for all $z\in Q\times I,$}\\
\mbox{(4) \; $\|\omega\|_{L^\infty(Q\times I)}<\rho$,} \\
\mbox{(5) \; $\int_Q \varphi(x,t)\,dx=0$ for all $t\in I$,} \\
\mbox{(6) \;  $\|\varphi_t\|_{L^\infty(Q\times I)}<2\rho,$}
\end{cases}
\end{equation}
where  $[\xi_{i,-}^\tau,\xi_{i,+}^\tau]_\rho$ denotes the $\rho$-neighborhood of the closed line segment $[\xi_{i,-}^\tau,\xi_{i,+}^\tau].$
Here, from (\ref{approx-1}.3), condition (\ref{approx-1}.6) follows as
\[
|\varphi_t|<|c_{i,+}-c_{i,-}|+\rho=(t_{i,+}-t_{i,-})|b_i|+\rho<2\rho \quad\textrm{in $Q\times I$.}
\]

Note (a), (c), (f), and (g) follow from (\ref{approx-1}), where $2\rho$ in  (\ref{approx-1}.6) can be adjusted to $\rho$ as in (g).
By the uniform continuity of $A$ on the set $J=\{p'\in\R^n\;|\; |p'|\le s_+ \}$,
we can find a $\delta'>0$ such that $|A(p')-A(p'')|< \frac{\epsilon}{3|\tilde Q\times\tilde I|}$ whenever $p',\,p''\in J$ and $|p'-p''|<\delta'.$
We then choose a $\tau>0$ so small that
\[
C\tau<\delta',\;\;C|\tilde Q\times \tilde I|\tau<\frac{\epsilon}{3}.
\]
Next, we  choose a $\delta>0$ such that
$
\delta<\frac{d_\tau}{2}.
$
If $0<\rho<\delta$, then by (\ref{approx-1}.1) and (\ref{approx-1}.3),
for all $z\in Q\times I$ and $|(p',\beta')-(p,\beta)|\le \delta$,
\[
\xi_i(p',\beta')+\nabla\omega(z)\in\Sigma_0,\quad\mathrm{dist}(\xi_i(p',\beta')+\nabla\omega(z),H_\tau)<
d_\tau,
\]
and so $\xi_i(p',\beta')+\nabla\omega(z)\in R(F_0),$ that is, $(p'+D\varphi(z), \beta'+ \psi_t(z))\in \mathcal{S}$. Thus (b) holds for all $0<\rho<\delta.$ In particular, $(p+D\varphi(z), \beta+ \psi_t(z))\in \mathcal{S}$ and so $|p+D\varphi(z)|\le s_+(r+\mu)$  and $|\beta+\psi_t(z)|\le r+\mu$ for all $z\in Q\times I$, by (i) of Theorem \ref{thm:main2}. Thus
\begin{equation*}
\begin{split}
\int_{ Q\times I} & |\beta+\psi_t -A(p+D\varphi)|dz \\
 &\le \int_{G_\rho}|\beta+\psi_t -A(p+D\varphi)|dz + 2\sigma(s_+)\rho
\\
&\le |Q\times I| \max\{|\beta_{i,\pm}^\tau -A(p_{i,\pm}^\tau)|\} +2\sigma(s_+)\rho
\\
&\le C|Q\times I|\tau+ |Q\times I| \max\{|A(p_{i,\pm}) -A(p_{i,\pm}^\tau)|\} +2\sigma(s_+)\rho \\
& \le \frac{2\epsilon|Q\times I|}{3|\tilde Q\times\tilde I|}+2\sigma(s_+)\rho,
\end{split}
\end{equation*}
where  $\xi^\tau_{i,\pm}=\begin{pmatrix} p^\tau_{i,\pm} & c^\tau_{i,\pm}\\B^\tau_{i,\pm} & \beta^\tau_{i,\pm}\end{pmatrix}$.
Thus, (d) holds for all $\rho>0$ satisfying $2\sigma(s_+)\rho<\frac{\epsilon|Q\times I|}{3|\tilde Q\times\tilde I|}$. Similarly,
\begin{equation*}
\begin{split}
\int_{Q\times I} & \mathrm{dist}((p+D\varphi(z),\beta+\psi_t(z)),\mathcal A)dz \\
 &\le \int_{G_\rho}\max|(p_{i,\pm}^\tau,\beta_{i,\pm}^\tau)-(p_{i,\pm},\beta_{i,\pm})|dz + 2(s_+ +\sigma(s_+))\rho
\\
&\le C|Q\times I|\tau+ 2(s_+ +\sigma(s_+))\rho \\
& \le \frac{\epsilon|Q\times I|}{3|\tilde Q\times\tilde I|}+2(s_+ +\sigma(s_+))\rho;
\end{split}
\end{equation*}
therefore, (e)  holds for all  $\rho>0$ with $(s_++\sigma(s_+))\rho<\frac{\epsilon|Q\times I|}{3|\tilde Q\times\tilde I|}$.

We have verified (a) -- (g) for any $(p,\beta)\in\bar{\mathcal{B}}_i$ and $1\le i\le N$, where $\delta>0$ is independent of the index $i$. Since $\mathcal B_1,\cdots, \mathcal B_N$ cover $\mathcal K$,
the proof is now complete.
\end{proof}

\section{Boundary function $\Phi$ and the admissible set $\mathcal U$ \\by a countable open covering}\label{sec:add-set}

To start the proof of Theorem \ref{thm:NF-1}, assume $\Omega$ and $u_0$ satisfy (\ref{assume-1}) and (\ref{av-0}).

\subsection{Boundary function $\Phi$} We first construct  a suitable boundary function $\Phi=(u^*,v^*)$ to prove Theorem \ref{thm:NF-1} in the setting of the general existence theorem, Theorem \ref{thm:main}. Assuming all the hypotheses in Theorem \ref{thm:NF-1}, we fix any $\tilde r\in(\sigma(m_0'),\sigma(s_+)).$ For each $r\in(0,\tilde r)$, let $\bar\mu_r>0$ be chosen so that
\[
0<r-\bar\mu_r<r+\bar\mu_r<\tilde r \quad\mbox{and}\quad \bar\mu_r\le \mu_r',
\]
where $\mu_r'>0$ is some number from Theorem \ref{main-lemma}. Then $\{I_r:=(r-\bar\mu_r,r+\bar\mu_r)\}_{r\in(0,\tilde r)}$ is an  open covering for the interval $(0,\tilde r)$. For convenience, we select a countable sub-covering $\{I_{r_k}\}_{k\in\N}$ of $\{I_r\}_{r\in(0,\tilde r)}$ for $(0,\tilde r)$. We now define a \emph{diagonal-covering set} $\mathcal S_{dc}\subset\R^{n+n}$ by
\[
\mathcal S_{dc}=\bigcup_{k\in\N}\mathcal S_{r_k-\bar\mu_{r_k},r_k+\bar\mu_{r_k}};
\]
then by Theorem \ref{thm:main2}, $\mathcal S_{dc}\subset\R^{n+n}$ is open and bounded.

Next, we apply Lemma \ref{lem:modi-NF} to the number $r=\tilde r$ in order  to determine functions   $\tilde\sigma,\,\tilde f\in C^{1+\alpha}([0,\infty))$   satisfying its conclusion. Also, let $\tilde A(p)=\tilde f(|p|^2)p$ $(p\in\R^n).$ Then the following holds.

\begin{lem}\label{lem-a} We have
\[
(p,\tilde A(p))\in \mathcal S_{dc}  \quad\forall\; 0<|p|<s_+(\tilde r).
\]
\end{lem}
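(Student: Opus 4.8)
The plan is to show that for each $p$ with $0<|p|<s_+(\tilde r)$, the pair $(p,\tilde A(p))$ lands in one of the sets $\mathcal S_{r_k-\bar\mu_{r_k},r_k+\bar\mu_{r_k}}$ whose union defines $\mathcal S_{dc}$. The natural choice of index is governed by the value $\tilde\sigma(|p|)$: since $\tilde\sigma$ is strictly increasing on $[0,\infty)$ with $\tilde\sigma(0)=0$ and $\tilde\sigma(s_+(\tilde r))=\sigma(s_+(\tilde r))=\tilde r$, the number $r:=\tilde\sigma(|p|)$ lies in $(0,\tilde r)$, so it belongs to some covering interval $I_{r_k}=(r_k-\bar\mu_{r_k},r_k+\bar\mu_{r_k})$; set $\mu=\bar\mu_{r_k}$ and aim to show $(p,\tilde A(p))\in\mathcal S_{r_k-\mu,r_k+\mu}$.

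The core of the argument is then to exhibit an explicit rank-one connection realizing $(p,\tilde A(p))$ as a diagonal component of a matrix in $R(F_{0,r_k-\mu,r_k+\mu})$, using the alternate characterization in Lemma \ref{lem-form} together with Lemma \ref{lem-rough}. Writing $\hat p=p/|p|$, I would look for $p_\pm$ of the form $p_+=s_+(r')\hat p$ and $p_-=-s^2_-(r')\hat p$ (Type I) or $p_-=-s^1_-(r')\hat p$ (Type II) with $r'$ chosen in $I_{r_k}$, so that $A(p_\pm)=\pm\sigma(s_\mp(\cdot))$ etc.\ are collinear with $\hat p$ and the orthogonality/rank-one condition of Lemma \ref{lem-form} reduces to a one-dimensional matching: one needs $t_-<0<t_+$ and the collinear vector $\gamma$ (a multiple of $\hat p$, with $\gamma\cdot q=0$ automatically satisfiable by taking $q\perp\hat p$... ) — here I must be careful, since in Lemma \ref{lem-form} $q$ is the gradient direction and $\gamma\perp q$; the cleaner route is to take $q=\hat p$ and $\gamma=0$, so that $\xi+t\eta$ has first block $p+t\hat p$ and the last block must equal $A(p+t\hat p)=\sigma(|p+t\hat p|)\widehat{p+t\hat p}$. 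Choosing $t_+>0$ with $|p|+t_+\in(s_+(r_k-\mu),s_+(r_k+\mu))$ and $t_-<0$ with either $s^2_-(r_k+\mu)<|p|+t_-<s^2_-(r_k-\mu)$ or (if $|p|+t_-<0$) the reflected analogue, the point $(p,\beta)$ with $\beta=\tilde A(p)=\tilde\sigma(|p|)\hat p=r\hat p$ sits on the segment joining $\xi+t_-\eta$ to $\xi+t_+\eta$ precisely when $r=\tilde\sigma(|p|)$ is the convex combination of $\sigma$ evaluated at the two endpoints with the weight dictated by $t_\pm$; this is where the inequality $\tilde\sigma(s)>\sigma(s)$ on $(0,s_+(\tilde r))$ from Lemma \ref{lem:modi-NF} is used, guaranteeing that $r$ strictly exceeds the value that a \emph{Fourier-type} (monotone) interpolation would give, hence lies strictly between $\sigma$ at the backward branch point (negative) and $\sigma$ at the forward branch point (positive), so a genuine rank-one segment through $(p,r\hat p)$ with one endpoint in $F_-$ and one in $F_+$ exists.

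The main obstacle I anticipate is the bookkeeping needed to guarantee that the two endpoints can be placed simultaneously inside the \emph{same} scaled set $F_{\pm,r_k-\mu,r_k+\mu}$ for a single admissible $r'\in I_{r_k}$, rather than needing two different radius parameters; this forces a careful use of the continuity of $s_+(\cdot)$, $s^2_-(\cdot)$, $s^1_-(\cdot)$ and the strict inequalities in Hypothesis (NF)(ii)--(iii), and possibly shrinking $\bar\mu_{r_k}$ at the covering stage (which is harmless, as $\bar\mu_r$ was only required to satisfy $\bar\mu_r\le\mu_r'$ and $r\pm\bar\mu_r\in(0,\tilde r)$). I would handle the two branch cases — $|p|+t_-\ge 0$ versus $|p|+t_-<0$, and the Type I versus Type II distinction between $s^2_-$ and $s^1_-$ — as parallel subcases, in each writing down the convex-combination identity $r=\lambda\,\sigma(s_+(r'))+(1-\lambda)\bigl(-\sigma(s_-^{j}(r'))\bigr)$ and solving for the weight $\lambda\in(0,1)$, then verifying $|p|=\lambda s_+(r')+(1-\lambda)(-s_-^{j}(r'))$ is consistent — equivalently invoking Lemma \ref{lem-rough}, which already packages exactly this equivalence between membership of $(p,\beta)$ in $\mathcal S$ and the existence of such a rank-one segment, so that once the endpoints are located in $F_\pm$ the conclusion $(p,\tilde A(p))\in\mathcal S_{r_k-\mu,r_k+\mu}\subset\mathcal S_{dc}$ is immediate.
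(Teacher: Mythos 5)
Your proposal follows essentially the same route as the paper: set $r=\tilde\sigma(|p|)\in(0,\tilde r)$, locate $r$ in a covering interval $I_{r_k}$, take $q=\hat p$ and $\gamma=0$, build a rank-one segment between points of $F_\pm$, and conclude via Lemma~\ref{lem-rough}. But the ``main obstacle'' you anticipate is not actually there, and this is precisely the step where you leave things unresolved. The paper's decisive choice is to take $r'=r=\tilde\sigma(|p|)$ exactly, i.e.\ $p_+=s_+(r)\,\hat p$ and $p_-=-s^j_-(r)\,\hat p$. With this choice one has $A(p_+)=\sigma(s_+(r))\,\hat p=r\,\hat p$ and, because $p_-$ points in the direction $-\hat p$, also $A(p_-)=\sigma(s^j_-(r))(-\hat p)=(-r)(-\hat p)=r\,\hat p$; the two endpoint $\beta$-values coincide. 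The first convex-combination identity you write down (matching $r$ against $\lambda\,\sigma(s_+(r'))+(1-\lambda)(-\sigma(s^j_-(r')))$) therefore degenerates to the tautology $r=r$ and imposes nothing on $\lambda$, so only the $p$-identity $|p|=\lambda s_+(r)-(1-\lambda)s^j_-(r)$ remains, and it has a solution $\lambda\in(0,1)$ iff $-s^j_-(r)<|p|<s_+(r)$. The left inequality is trivial; the right one is exactly where $\tilde\sigma(s)>\sigma(s)$ on $(0,s_+(\tilde r))$ enters: $\sigma(s_+(r))=r=\tilde\sigma(|p|)>\sigma(|p|)$ forces $|p|<s_+(r)$ by monotonicity of $\sigma$ past $s_0$ (and trivially when $|p|\le s_0$). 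Moreover, since $r$ lies strictly inside $I_{r_k}=(r_k-\bar\mu_{r_k},r_k+\bar\mu_{r_k})$, the strict monotonicity of $s_+(\cdot)$ and $s^j_-(\cdot)$ immediately places $|p_\pm|$ strictly inside the required open ranges, so both endpoints land in $F_{\pm,r_k-\bar\mu_{r_k},r_k+\bar\mu_{r_k}}$ with no additional bookkeeping and no need to shrink $\bar\mu_{r_k}$ (which in any case you could not do, since the countable cover defining $\mathcal S_{dc}$ is fixed). The rank-one property of $\xi_+-\xi_-$ is then manifest because only the first-row $p$-block differs. Also, only the ``reflected'' case $|p|+t_-<0$ ever occurs here, since $p_-$ points opposite to $p$; your parallel treatment of both sign cases is superfluous.
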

\begin{proof}
Let $0<s=|p|<s_+(\tilde r)$, $r=\tilde\sigma(s)$ and $\zeta=p/|p|$, so that  $\zeta\in \mathbb{S}^{n-1}$, $\tilde A(p)=r\zeta$, and $0<r<\tilde r$. Set $p_{+}=s_+(r)\zeta$, $p_{-}=-s^2_-(r)\zeta$ (\textbf{Type I}), $p_{-}=-s^1_-(r)\zeta$ (\textbf{Type II}), and $\beta_{\pm}=r\zeta$. Then $A(p_{\pm})=r\zeta=\beta_{\pm}$. Define
$
\xi=\begin{pmatrix} p & 0\\ O & \tilde A(p)\end{pmatrix}
$
and
$
\xi_\pm=\begin{pmatrix} p_\pm & 0\\ O & \beta_\pm\end{pmatrix}.
$
Then $\xi=\lambda\xi_++(1-\lambda)\xi_-$ for some $0<\lambda<1.$

Observe now that $r\in I_{r_k}$ for some $k\in\N$, that is, $r_k-\bar\mu_{r_k}<r<r_k+\bar\mu_{r_k}$. We thus have
\[
\xi_\pm\in F_{\pm,r_k-\bar\mu_{r_k},r_k+\bar\mu_{r_k}}.
\]
Since  $\mathrm{rank}(\xi_+-\xi_-)=1$, it follows from the definition of $R(F_{0,r_k-\bar\mu_{r_k},r_k+\bar\mu_{r_k}})$ that $\xi\in(\xi_-,\xi_+)\subset R(F_{0,r_k-\bar\mu_{r_k},r_k+\bar\mu_{r_k}})$. Thus by Lemma \ref{lem-rough},
\[
(p,\tilde A(p))\in\mathcal S_{r_k-\bar\mu_{r_k},r_k+\bar\mu_{r_k}}\subset \mathcal S_{dc}.
\]
\end{proof}

By Lemma \ref{lem:modi-NF}, equation $u_t=\dv (\tilde A(Du))$ is uniformly parabolic. So by Theorem \ref{existence-gr-max}, the  initial-Neumann boundary value problem
\begin{equation}\label{ib-para}
\begin{cases} u^*_t =\dv (\tilde A(Du^*))&\mbox{in $\Omega_T$}\\
\partial u^*/\partial \n  =0 & \mbox{on $\partial \Omega\times (0,T)$}  \\
u^*(x,0)=u_0(x), &x\in \Omega
\end{cases}
\end{equation}
admits a unique classical solution  $u^*\in C^{2+\alpha,1+\alpha/2}(\bar\Omega_T)$.

From conditions (\ref{assume-1}) and (\ref{av-0}),  we can find a function $h\in C^{2+\alpha}(\bar\Omega)$ satisfying
\[
\Delta h=u_0\;\;\mbox{in}\;\;\Omega \quad\mbox{and}\quad \partial h/\partial \n=0\;\;\mbox{on}\;\;\partial\Omega.
\]
Let $v_0=Dh\in C^{1+\alpha}(\bar\Omega;\R^n)$ and define,  for $(x,t)\in\Omega_T$,
\begin{equation}\label{def-v}
v^*(x,t)=v_0(x)+\int_0^t \tilde A(Du^*(x,s))\,ds.
\end{equation}
Then it is easily seen that $\Phi := (u^*,v^*)\in C^{1}(\bar\Omega_T;\R^{1+n})$ satisfies (\ref{bdry-1}); that is,
\begin{equation}\label{bdry-2}
\begin{cases} u^*(x,0)=u_0(x) \; (x\in\Omega),\\
\dv v^*=u^*\;\;\textrm{in $\Omega_T$}, \\
v^*(\cdot,t) \cdot \n|_{\partial\Omega} =0 \; \; \forall\; t\in [0,T],\end{cases}
\end{equation}
and so $\Phi$ is a boundary function for the initial datum $u_0$.

Next, let
\[
\mathcal F=\left\{(p,A(p))\;|\; |p|\in\{0\}\cup[s_+(\tilde r),\max\{M,s_+\}] \right\},
\]
where $M=\|Du^*\|_{L^\infty(\Omega_T)}$.
Then we have the following:

\begin{lem} \label{lem-b}
\[
(Du^*(x,t), v^*_t(x,t))\in \mathcal S_{dc} \cup \mathcal F\quad \forall\; (x,t)\in \Omega_T.
\]
\end{lem}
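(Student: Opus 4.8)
The plan is to split $\Omega_T$ into two regions according to the size of $|Du^*(x,t)|$ and handle each by one of the two lemmas already established. First I would recall that by construction $v^*_t(x,t)=\tilde A(Du^*(x,t))$ for all $(x,t)\in\Omega_T$, since differentiating $(\ref{def-v})$ in $t$ gives exactly this (here we use that $u^*\in C^{2+\alpha,1+\alpha/2}(\bar\Omega_T)$, so the integrand is $C^1$ and the Fundamental Theorem of Calculus applies classically). Thus the pair in question is $(Du^*(x,t),\tilde A(Du^*(x,t)))$, and the statement reduces to showing $(p,\tilde A(p))\in\mathcal S_{dc}\cup\mathcal F$ for every $p=Du^*(x,t)$, which ranges over the set $\{p\in\R^n : |p|\le M\}$.

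Next I would distinguish the cases $0<|p|<s_+(\tilde r)$ and $|p|\ge s_+(\tilde r)$ (the case $p=0$ being trivially covered by $\mathcal F$, since $(0,A(0))=(0,0)$ and $0\in\{0\}\cup[s_+(\tilde r),\max\{M,s_+\}]$). In the first case, Lemma \ref{lem-a} directly gives $(p,\tilde A(p))\in\mathcal S_{dc}$, so nothing more is needed. In the second case $|p|\ge s_+(\tilde r)$, the key point is that by Lemma \ref{lem:modi-NF} we have $\tilde\sigma(s)=\sigma(s)$ for $s\ge s_+(\tilde r)$, hence $\tilde f(|p|^2)=f(|p|^2)$ and therefore $\tilde A(p)=A(p)$ whenever $|p|\ge s_+(\tilde r)$. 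Combined with $|p|\le M\le\max\{M,s_+\}$, this yields $(p,\tilde A(p))=(p,A(p))$ with $|p|\in[s_+(\tilde r),\max\{M,s_+\}]$, so $(p,\tilde A(p))\in\mathcal F$ by definition of $\mathcal F$.

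Putting the two cases together gives $(Du^*(x,t),v^*_t(x,t))\in\mathcal S_{dc}\cup\mathcal F$ for all $(x,t)\in\Omega_T$, which is the claim. The argument is essentially bookkeeping: the only substantive inputs are Lemma \ref{lem-a} for the sub-critical gradient values and the agreement $\tilde A=A$ for $|p|\ge s_+(\tilde r)$ coming from Lemma \ref{lem:modi-NF}. I do not anticipate a genuine obstacle here; the mildest care is needed at the borderline $|p|=s_+(\tilde r)$, where one should note $(p,\tilde A(p))=(p,A(p))\in\mathcal F$ so that both cases overlap consistently, and in observing that $M=\|Du^*\|_{L^\infty(\Omega_T)}$ is exactly the quantity used to define $\mathcal F$, so no gradient value escapes the two regions.
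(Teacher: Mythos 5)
Your proposal is correct and mirrors the paper's proof: both start from the identity $v^*_t(x,t)=\tilde A(Du^*(x,t))$ coming from (\ref{def-v}), then split according to whether $|Du^*(x,t)|$ lies in $(0,s_+(\tilde r))$ (apply Lemma \ref{lem-a}) or in $\{0\}\cup[s_+(\tilde r),\max\{M,s_+\}]$ (use $\tilde A=A$ there to land in $\mathcal F$). The only cosmetic difference is that the paper folds $p=0$ into the $\mathcal F$-case directly rather than treating it as a separate trivial case as you do.
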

\begin{proof}
Let $(x,t)\in\Omega_T$ and $p=Du^*(x,t)$.

First, assume $|p|\in\{0\}\cup[s_+(\tilde r),\max\{M,s_+\}]$. Then $\tilde A(p)=A(p)$ and hence by (\ref{def-v})
\[
(Du^*(x,t),v_t^*(x,t))=(p,\tilde A(p))=(p,A(p))\in \mathcal F.
\]
Otherwise, we have $0<|p|<s_+(\tilde r)$, and so by Lemma \ref{lem-a} and (\ref{def-v}),
\[
(Du^*(x,t),v_t^*(x,t))=(p,\tilde A(p))\in \mathcal S_{dc}.
\]
\end{proof}

We adopt the following terminology that is needed below.
\begin{defn} Let $G$ be an open set in $\R^N$.
 We say  a function $u$  is {\em piecewise $C^1$} in $G$ and write  $u\in C^1_{piece}(G)$  if there exists a sequence of disjoint open sets $\{G_j\}_{j=1}^\infty$ in $G$ such that
\[
u\in C^1(\bar G_j)  \;\; \forall j\in\mathbb{N},\quad |G\setminus \cup_{j=1}^\infty G_j|=0.
\]
It is then necessary to have   $|\partial G_j|=0\;\forall j\in\mathbb{N}$.
\end{defn}

\subsection{Selection of interface}  To separate the space-time domain $\Omega_T$ into the classical and micro-oscillatory parts for Lipschitz solutions, we set
\[
\begin{split}
\Omega_T^0 & =\{(x,t)\in\Omega_T\,|\,|Du^*(x,t)|=0\},\\
\Omega_T^1 & =\{(x,t)\in\Omega_T\,|\,0<|Du^*(x,t)|<s_+(\tilde r)\},\\
\Omega_T^2 & =\{(x,t)\in\Omega_T\,|\,|Du^*(x,t)|=s_+(\tilde r)\},\\
\Omega_T^{\tilde r}=\Omega_T^3 & =\{(x,t)\in\Omega_T\,|\,|Du^*(x,t)|>s_+(\tilde r)\},\\
\Omega_0^{\tilde r} & =\{(x,0) \,|\, x\in\Omega,\,|Du_0(x)|>s_+(\tilde r) \},
\end{split}
\]
then $\Omega_T=\cup_{k=0}^3\Omega_T^k$, $\Omega_T^1\ne\emptyset$, and $\Omega_0^{\tilde r}\subset\partial\Omega_T^{\tilde r}$. Observe from the proof of Lemma \ref{lem-b} that $(Du^*,v_t^*)\in\mathcal S_{dc}$ in $\Omega_T^1$.

\subsection{The admissible set $\mathcal U$} Let $m=\|u_t^*\|_{L^\infty(\Omega_T)}+1.$  We define $\mathcal U$ to be the set of all $u\in W_{u^*}^{1,\infty}(\Omega_T)$ satisfying
\begin{equation}\label{ad-class-NF}
\left\{
\begin{array}{l}
  \mbox{$\|u_t\|_{L^\infty(\Omega_T)}<m,$} \\
  \mbox{$\exists$an open set $G\subset\subset\Omega_T^1$ with $|\partial G|=0$ such that} \\
  \mbox{\quad$u\equiv u^*$ in $\Omega_T\setminus G$ and $u\in C^1_{piece}(G)$, and} \\
  \mbox{$\exists \, v\in W_{v^*}^{1,\infty}(\Omega_T;\R^n)$ such that $v\equiv v^*$ in $\Omega_T\setminus G$,} \\
  \mbox{\quad$v\in C^1_{piece}(G;\R^n)$, $\dv v=u$ and $(Du,v_t)\in \mathcal S_{dc}\cup \mathcal F$ a.e.\;in $\Omega_T$,} \\
  \mbox{\quad and $(Du,v_t)\in \mathcal S_{dc}$ a.e.\;in $\Omega_T^1$.}
\end{array}\right.
\end{equation}
Next, for each $\epsilon>0$, let $\mathcal U_\epsilon$ be the set of all $u\in\mathcal U$ satisfying, in addition to (\ref{ad-class-NF}),
\[\left\{
\begin{array}{l}
  \int_{\Omega_T} |v_t-A(Du)|dxdt\le\epsilon|\Omega_T|, \\
  \int_{\Omega_T^1} \mathrm{dist}((Du,v_t),\mathcal B) dxdt\le\epsilon|\Omega_T^1|,
\end{array}\right.
\]
where $\mathcal B=\mathcal B_{\tilde r}\subset \R^{n+n}$ is the set given by
\[
\mathcal B=\{(p,A(p)) \,|\, |p|\in[s^2_-(\tilde r),s_+(\tilde r)]\},\quad (\textbf{Type I})
\]
\[
\mathcal B=\{(p,A(p)) \,|\, |p|\in[0,s^1_-(\tilde r)]\cup[s_0,s_+(\tilde r)]\}.\quad (\textbf{Type II})
\]
Observe here that
\[
\bigcup_{k\in\N} \mathcal A_{r_k-\bar\mu_{r_k},r_k+\bar\mu_{r_k}}\subset \mathcal B,
\]
where the sets $\mathcal A_{r_k-\bar\mu_{r_k},r_k+\bar\mu_{r_k}}$ are as in Theorem \ref{main-lemma}.

\begin{remk}\label{rmk-1} Summarizing the above, we have constructed a boundary function $\Phi=(u^*,v^*)\in C^1(\bar\Omega_T;\R^{1+n})$ for the initial datum $u_0$ in such a way that the admissible set $\mathcal U$ contains $u^*$; so $\mathcal U$ is nonempty. Also  $\mathcal U$ is a bounded subset of $W^{1,\infty}_{u^*}(\Omega_T)$, since $\mathcal S_{dc}\cup \mathcal F$ is bounded and $\|u_t\|_{L^\infty(\Omega_T)}<m$ for all $u\in\mathcal U$. Moreover, by (i) of Theorem \ref{thm:main2} and the definition of $\mathcal F$, for each $u\in\mathcal U$, its corresponding vector function $v$ satisfies $\|v_t\|_{L^\infty(\Omega_T)}\le \max\{\sigma(M_0),\sigma(s_+)\}$; this bound  plays the role of a fixed number $R>0$ in the general density approach in Subsection \ref{subsec-approach}. Finally, note that $\tilde A(Du^*)\ne A(Du^*)$ in the nonempty open set $\Omega_T^1$;  hence $u^*$ itself is not a Lipschitz solution to (\ref{ib-P}).

In view of the general existence theorem, Theorem \ref{thm:main}, it only remains to prove the $L^\infty$-density of $\mathcal U_\epsilon$ in $\mathcal U$ towards the existence of infinitely many Lipschitz solutions to problem (\ref{ib-P}) for both types; this core subject is carried out in the next section.
\end{remk}

\section{Density of $\mathcal U_\epsilon$ in $\mathcal U$: \\Final step for the proof  of Theorem \ref{thm:NF-1}}\label{sec:den-proof}

In this section, we follow Section \ref{sec:add-set} to complete the proof of Theorem \ref{thm:NF-1}.

\subsection{The density property} The density theorem below is the last preparation  for both \textbf{Types I} and \textbf{II}.

\begin{thm} \label{thm-density-1} For each $\epsilon>0$, $\mathcal U_\epsilon$ is dense in $\mathcal U$ under the $L^\infty$-norm.
\end{thm}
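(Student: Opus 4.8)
The plan is to show that for each $\epsilon>0$ and each $u\in\mathcal U$, one can find $u_\epsilon\in\mathcal U_\epsilon$ with $\|u_\epsilon-u\|_{L^\infty(\Omega_T)}$ arbitrarily small. Fix $u\in\mathcal U$, with associated open set $G\subset\subset\Omega_T^1$ and vector function $v\in W^{1,\infty}_{v^*}(\Omega_T;\R^n)$ as in (\ref{ad-class-NF}). Since $u\equiv u^*$, $v\equiv v^*$ outside $G$ and $(Du^*,v^*_t)\in\mathcal S_{dc}$ on $\Omega_T^1\supset G$, we have $(Du,v_t)\in\mathcal S_{dc}$ a.e.\ in $\Omega_T^1$, and the oscillation we must add is supported inside $G$. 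First I would reduce to a local construction: cover (up to a null set) the open set $G$ by countably many disjoint boxes $Q_j\times I_j\subset\subset G$ on each of which $u\in C^1(\overline{Q_j\times I_j})$ and $v\in C^1(\overline{Q_j\times I_j};\R^n)$ (possible because $u,v\in C^1_{piece}(G)$ and $\Omega_T^1=\cup_k\mathcal S_{r_k-\bar\mu_{r_k},r_k+\bar\mu_{r_k}}$-preimage is open); and further subdivide each box so that on it, $(Du,v_t)$ stays in a small ball around a point $(p,\beta)\in\mathcal S=\mathcal S_{r_k-\bar\mu_{r_k},r_k+\bar\mu_{r_k}}$ for a single index $k$. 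On each such small box we apply Theorem \ref{main-lemma} (with $\tilde Q\times\tilde I$ taken to be a fixed large box containing $\Omega_T$, and $\mathcal K$ a compact subset of $\mathcal S$ containing all the relevant centers) to get a corrector $\omega_j=(\varphi_j,\psi_j)\in C^\infty_c(Q_j\times I_j;\R^{1+n})$ satisfying (a)--(g) there, with $\rho$ chosen so small that the $L^\infty$, $\varphi_t$, and density errors are controlled.

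The second step is to assemble the global correction. Define $u_\epsilon=u+\sum_j\varphi_j$ and $v_\epsilon=v+\sum_j\psi_j$ (finite or countable sum with disjoint supports, so no overlap issues); by property (f) the extra term integrates to zero over each slice $Q_j$, and by (c),(g) with $\rho$ summably small we keep $u_\epsilon\in W^{1,\infty}_{u^*}(\Omega_T)$, $\|(u_\epsilon)_t\|_{L^\infty}<m$, $u_\epsilon\equiv u^*$ off $G$, and $u_\epsilon\in C^1_{piece}(G)$ (the new pieces being the boxes $Q_j\times I_j$ and $G\setminus\cup_j\overline{Q_j\times I_j}$). Property (a) gives $\dv v_\epsilon=u_\epsilon$ (note $\dv\psi_j=0$ and $\dv\varphi_j$'s primitive — actually one must check $\dv v_\epsilon=u_\epsilon$ directly: $\dv v=u$ and $\dv\psi_j=0$ while $u_\epsilon-u=\sum\varphi_j$, so one needs the correctors to supply $\dv(\text{something})=\varphi_j$; this is exactly why $v$ changes by $\psi_j$ with $(\psi_j)_t$ matching and the divergence constraint is built into the geometry — I would verify this carefully using that $(p+D\varphi_j,\beta+(\psi_j)_t)\in\mathcal S\subset R(F_0)$-diagonal, whose defining matrices have $\tr B=0$, which encodes $\dv v=u$). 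Property (b) of Theorem \ref{main-lemma} ensures $(Du_\epsilon,(v_\epsilon)_t)\in\mathcal S_{dc}$ a.e.\ in $\Omega_T^1$ (hence $\in\mathcal S_{dc}\cup\mathcal F$ a.e.\ in $\Omega_T$), so $u_\epsilon\in\mathcal U$.

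The third step is the two integral estimates defining $\mathcal U_\epsilon$. For $\int_{\Omega_T}|(v_\epsilon)_t-A(Du_\epsilon)|$: outside $G$ the integrand vanishes (there $u_\epsilon=u^*$, $(v_\epsilon)_t=v^*_t=\tilde A(Du^*)$, and on $\Omega_T\setminus\Omega_T^1$ one has $\tilde A=A$, while $\Omega_T^1\setminus G$ contributes... — here I must be careful: on $\Omega_T^1\setminus G$, $v^*_t=\tilde A(Du^*)\ne A(Du^*)$ in general, so this set must actually have small measure, which it does because $G$ was chosen to exhaust $\Omega_T^1$ up to a set of measure $<\epsilon|\Omega_T|/(4\|\tilde A-A\|_\infty+1)$; I would incorporate this into the choice of $G$ at the outset, or note that $u\in\mathcal U$ already forces $(Du,v_t)\in\mathcal S_{dc}$ a.e.\ on $\Omega_T^1$ so in fact $v_t=\tilde A(Du)$ only on a set... — cleanest is: WLOG shrink to assume $|\Omega_T^1\setminus G|$ small); inside each $Q_j\times I_j$ we use (d), summing to get $<\epsilon|\Omega_T|$. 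Similarly $\int_{\Omega_T^1}\mathrm{dist}((Du_\epsilon,(v_\epsilon)_t),\mathcal B)$ splits over the boxes and uses (e) together with $\bigcup_k\mathcal A_{r_k-\bar\mu_{r_k},r_k+\bar\mu_{r_k}}\subset\mathcal B$, plus the small-measure estimate for $\Omega_T^1\setminus\cup_j(Q_j\times I_j)$ on which $(Du_\epsilon,(v_\epsilon)_t)=(Du,v_t)\in\mathcal S_{dc}$ is bounded.

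The main obstacle I anticipate is the bookkeeping needed to make the countable box decomposition of $G$ compatible with \emph{all} of: (i) $C^1$-regularity of $u,v$ on each closed box, (ii) confinement of $(Du,v_t)$ to a small ball around a single point of a single $\mathcal S_{r_k-\bar\mu_{r_k},r_k+\bar\mu_{r_k}}$ (so that the uniform $\delta$ from Theorem \ref{main-lemma} applies and the relevant centers lie in a fixed compact $\mathcal K$), (iii) summability of the $\rho_j$'s so the assembled $u_\epsilon$ genuinely lies in $W^{1,\infty}$ with the $m$-bound and is $L^\infty$-close to $u$, and (iv) the ``error outside the boxes'' ($\Omega_T^1\setminus\cup_j\overline{Q_j\times I_j}$, and $\Omega_T^1\setminus G$) having small enough measure to be absorbed. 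This is precisely the kind of Vitali-covering-plus-exhaustion argument that appears in \cite{KY1,KY2}; I would organize it by first fixing tolerances, then choosing $G$ (shrinking if needed) with $|\Omega_T^1\setminus G|$ small, then applying a Vitali/Lebesgue covering of $G$ by boxes adapted to the continuity of $Du,v_t$ and to the open covering $\{\mathcal S_{r_k-\bar\mu_{r_k},r_k+\bar\mu_{r_k}}\}$, and only then invoking Theorem \ref{main-lemma} box-by-box with a geometrically decaying sequence of $\rho$'s. No genuinely new idea beyond \cite{KY2} is required; the content is in verifying that the non-Fourier geometry packaged in Theorems \ref{thm:main2} and \ref{main-lemma} feeds through the same machine.
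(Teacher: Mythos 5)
Your overall blueprint matches the paper's: exhaust $G$ (plus an extra open set in $\Omega_T^1\setminus\bar G$ to absorb the off-$G$ error) by disjoint boxes on which $(Du,v_t)$ stays near a single point of a single $\mathcal S_{r_k-\bar\mu_{r_k},r_k+\bar\mu_{r_k}}$, invoke Theorem~\ref{main-lemma} box-by-box, glue, and estimate. But there is one step that your proposal gets wrong, and it is the step that makes or breaks admissibility of the perturbed $u_\epsilon$.

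You write that $\dv v_\epsilon=u_\epsilon$ is ``built into the geometry'' because the matrix sets carry the constraint $\tr B=0$. This is incorrect. In the matrix $\begin{pmatrix}p&c\\B&\beta\end{pmatrix}=\nabla(u,v)=\begin{pmatrix}Du&u_t\\Dv&v_t\end{pmatrix}$, the constraint $\tr B=0$ encodes $\dv v=0$, not $\dv v=u$. Lemma~\ref{approx-lem} and hence Theorem~\ref{main-lemma} produce $\omega=(\varphi,\psi)$ with $\dv\psi=0$ and $\int_Q\varphi(\cdot,t)\,dx=0$, so after adding $(\varphi,\psi)$ alone you would get $\dv(v+\psi)=u\ne u+\varphi=u_\epsilon$. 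The missing ingredient is the right-inverse-of-divergence operator $\mathcal R$ of Theorem~\ref{div-inv}: one sets $g^j_{l,i}=\mathcal R\varphi^j_{l,i}\in W^{1,\infty}_0(Q^j_{l,i}\times I^j_{l,i};\R^n)$, so that $\dv g^j_{l,i}=\varphi^j_{l,i}$, and defines $\tilde v=v+\sum\chi_{Q^j_{l,i}\times I^j_{l,i}}(\psi^j_{l,i}+g^j_{l,i})$. The hypotheses of Theorem~\ref{div-inv} (compact support in a box, zero mean on each time slice) are exactly properties (c) and (f) of Theorem~\ref{main-lemma}, which is why (f) is there.

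Adding $g^j_{l,i}$ creates a second, coupled, obligation your proposal also omits: the term $(g^j_{l,i})_t$ perturbs $v_t$, so you must keep $|(Du(z),v_t(z)+(g^j_{l,i})_t(z))-(p,\beta)|\le\delta^j_l$ in order for Theorem~\ref{main-lemma}(b) to place $(D\tilde u,\tilde v_t)$ back inside $\mathcal S^j_l$. This is why Theorem~\ref{div-inv} provides the quantitative bound $\|(g^j_{l,i})_t\|_{L^\infty}\le C\|(\varphi^j_{l,i})_t\|_{L^\infty}\le C\rho$, and why $\rho$ must be chosen $\le\delta^j_l/(2C)$ in addition to being $\le\min\{\bar\tau_0,\epsilon/(20C),\eta\}$; the remaining $\delta^j_l/2$ is eaten by the modulus-of-continuity estimate (\ref{density-5}). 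Without the $g$-corrector and without budgeting half of $\delta^j_l$ for it, the assembled function fails $\dv\tilde v=\tilde u$, so it does not lie in $\mathcal U$ at all, and the density claim collapses. The rest of your outline (the countable covering bookkeeping, the finite sub-covering by the $\mathcal S^j_l$, the $\epsilon/5$ budgeting for $I^k_1,\dots,I^k_5$) is in the right spirit and matches the paper once this piece is put in.
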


\begin{proof}

Given any $\epsilon>0$, let  $u\in \mathcal U$ and $\eta>0$. The goal is to construct a function  $\tilde u\in  \mathcal U_\epsilon$ such that $\|\tilde u-u\|_{L^\infty(\Omega_T)}<\eta.$
For clarity, we divide the proof into several steps.

1. Note from (\ref{ad-class-NF}) that $\|u_t\|_{L^\infty(\Omega_T)}<m-\bar\tau_0$ for some $\bar\tau_0>0$, there exists an open set $G\subset\subset\Omega_T^1$ with $|\partial G|=0$ such that $u\equiv u^*$  in  $\Omega_T\setminus G$ and $u\in C^1_{piece}(G)$,  and there exists a vector function $v\in W_{v^*}^{1,\infty}(\Omega_T;\R^n)$   such that  $v\equiv v^*$  in  $\Omega_T\setminus G$, $v\in C^1_{piece}(G;\R^n)$, $\dv v=u$ and  $(Du,v_t)\in \mathcal S_{dc}\cup \mathcal F$ a.e.\;in $\Omega_T$, and   $(Du,v_t)\in \mathcal S_{dc}$ a.e.\;in $\Omega_T^1$.
Since both $u$ and $v$ are piecewise $C^1$ in $G$,  there exists a sequence of disjoint open sets $\{G_j\}_{j=1}^\infty$ in $G$ with  $|\partial G_j|=0$ such that
\[
u\in C^1(\bar G_j),
\;v\in C^1(\bar G_j;\R^n)\; \; \forall j\geq 1,\quad |G\setminus \cup_{j=1}^\infty G_j|=0.
\]
We also choose an open set $G_0\subset\subset\Omega_T^1\setminus\bar G$ with $|\partial G_0|=0$ such that
\begin{equation}\label{density-1}
\left\{
\begin{split}
& \int_{(\Omega_T^1\setminus\bar G)\setminus G_0} |v_t-A(Du)|dz\le \frac{\epsilon}{5}|\Omega_T^1|,\\
& \int_{(\Omega_T^1\setminus\bar G)\setminus G_0} \mathrm{dist}((Du,v_t),\mathcal B)dz\le \frac{\epsilon}{5}|\Omega_T^1|.
\end{split}\right.
\end{equation}
Then the open set $\tilde G:=G_0\cup G$ is such that $\tilde G\subset\subset\Omega_T^1$, $|\partial\tilde G|=0$, and $\{G_j\}_{j=0}^\infty$ is a sequence of disjoint open subsets of $\tilde G$ whose union has measure $|\tilde G|$.

2. Let $j\in \{0\}\cup\N=:\N_0$ be fixed. Note that $(Du(z),v_t(z))\in\bar{\mathcal{S}}_{dc}$ for all $z=(x,t)\in G_j$ and that $H_j:=\{z\in G_j\;|\;(Du(z),v_t(z))\in\partial\mathcal{S}_{dc}\}$ is a (relatively) closed set in $G_j$ with measure zero. So $G_j':=G_j\setminus H_j$ is an open subset of $G_j$ with $| G_j'|=|G_j|$, and $(Du(z),v_t(z))\in\mathcal{S}_{dc}$ for all $z\in G_j'$. Now, we choose an open set $G_j''\subset\subset G_j'$ with $|\partial G_j''|=0$ such that
\begin{equation}\label{density-2}
\left\{
\begin{split}
& \int_{G_j'\setminus G_j''} |v_t-A(Du)|dz\le \frac{\epsilon}{5\cdot 2^{j+1}}|\Omega_T^1|,\\
& \int_{G_j'\setminus G_j''} \mathrm{dist}((Du,v_t),\mathcal B)dz\le \frac{\epsilon}{5\cdot 2^{j+1}}|\Omega_T^1|.
\end{split}\right.
\end{equation}
Observe that $(Du(z),v_t(z))\in\mathcal{S}_{dc}$ for all $z$ in the compact set $\bar G_j''$; so we are able to choose a finite index set $K_j\subset \N$ such that
\[
(Du(z),v_t(z))\in\bigcup_{k\in K_j}\mathcal{S}_{r_k-\bar\mu_{r_k},r_k+\bar\mu_{r_k}}\quad\forall z\in\bar G_j''.
\]
Let $k^j_1<\cdots<k^j_{n_j}$ denote the indices in $K_j$, where $n_j$ is the cardinality of $K_j$, and write
\[
\mathcal S^j_l=\mathcal{S}_{r_{k^j_l}-\bar\mu_{r_{k^j_l}},r_{k^j_l}+\bar\mu_{r_{k^j_l}}}\quad\mbox{for $l=1,\cdots,n_j$}.
\]

3. For each $l\in \{1,\cdots,n_j\}$ and each $\tau>0$, let
\[
\mathcal G^j_{l,\tau}=\left\{(p,\beta)\in  \mathcal S^j_l \;|\; \mathrm{dist}((p,\beta),\partial\mathcal S^j_l)> \tau,\,\mathrm{dist}((p,\beta),\mathcal B)>\tau \right\};
\]
then
\[
\mathcal S^j_l  =  (\cup_{\tau>0}\mathcal G^j_{l,\tau})\cup\{(p,\beta)\in\mathcal S^j_l\;|\;\mathrm{dist}((p,\beta),\mathcal B)=0\}.
\]
Note
\[
\begin{split}
\int_{G_j''} & |v_t (z)- A(Du(z))|\,dz\\
& \le \sum_{l=1}^{n_j}\int_{\{z\in G_j'' \,|\, (Du(z),v_t(z))\in \mathcal S^j_l\}}  |v_t (z)- A(Du(z))|\,dz\\
& = \sum_{l=1}^{n_j}\lim_{\tau\to 0^+}\int_{\{z\in G_j''\,|\,(Du(z),v_t(z))\in \mathcal G^j_{l,\tau}\}} |v_t(z)-A(Du(z))|\,dz,
\end{split}
\]
\[
\begin{split}
\int_{G_j''} & \mathrm{dist}((Du(z),v_t(z)),\mathcal B)\,dz\\
& \le \sum_{l=1}^{n_j}\lim_{\tau\to 0^+}\int_{\{z\in G_j''\,|\,(Du(z),v_t(z))\in \mathcal G^j_{l,\tau}\}}  \mathrm{dist}((Du(z),v_t(z)),\mathcal B)\,dz;
\end{split}
\]
then we choose a $\tau_j>0$ so that for $l=1,\cdots,n_j$, we have $|\partial O^j_{l}|=0$ and
\begin{equation}\label{density-3}
\left\{
\begin{split}
& \int_{F^j_{l}} |v_t(z)-A(Du(z))|\,dz < \frac{\epsilon}{5\cdot2^{j+1}n_j}|\Omega^1_T|,\\
& \int_{F^j_{l}} \mathrm{dist}((Du(z),v_t(z)),\mathcal B)\,dz < \frac{\epsilon}{5\cdot2^{j+1}n_j}|\Omega^1_T|,
\end{split}\right.
\end{equation}
where $O^j_{l}=  \{z\in G_j''\; |\; (Du(z),v_t(z))\in \mathcal G^j_{l,\tau_j}\}$ and $F^j_{l}=G_j''\setminus O^j_{l}$.  We also define $U^j_1=O^j_{1}$ and $U^j_l=O^j_l\setminus(\bar O^j_{1}\cup\cdots\cup\bar O^j_{l-1})$ $(l=2,\cdots,n_j)$; then $U^j_1,\cdots,U^j_{n_j}$ are disjoint open subsets of $\cup_{l=1}^{n_j}O^j_l$ whose union has measure $|\cup_{l=1}^{n_j}O^j_l|$.

4. We now fix an  $l\in \{1,\cdots,n_j\}$. Note that
\[
U^j_l\subset O^j_{l}=  \{z\in G_j''\; |\; (Du(z),v_t(z))\in \mathcal G^j_{l,\tau_j}\}
\]
and that  $\mathcal K^j_l:=\bar{\mathcal G}^j_{l,\tau_j}$ is a compact subset of $\mathcal S^j_l=\mathcal{S}_{r_{k^j_l}-\bar\mu_{r_{k^j_l}},r_{k^j_l}+\bar\mu_{r_{k^j_l}}}$. Let $\tilde Q\subset\R^n$ be a  box with $\Omega\subset\tilde Q$, and let $\tilde I=(0,T)$. Applying Theorem \ref{main-lemma} to box $\tilde Q\times\tilde I,$ $\mathcal K^j_l\subset\subset\mathcal S^j_l$, and $\epsilon'=\frac{\epsilon|\Omega_T|}{20}$, we obtain a constant $\delta^j_l>0$ that satisfies  the conclusion of the theorem. By the uniform continuity of $A$ on compact subsets of $\R^n$, we can find a $\theta=\theta_{\epsilon,s_+}>0$ such that
\begin{equation}\label{density-4}
|A(p)-A(p')|<\frac{\epsilon}{20}
\end{equation}
whenever $|p|,\,|p'|\le s_+$ and $|p-p'|\le \theta$. 
 Also by the uniform continuity of $u$, $v$ and their gradients on $\bar U^j_l$,
there exists a $\nu^j_l>0$ such that
\begin{equation}\label{density-5}
\begin{array}{c}
  |u(z)-u(z')|+|\nabla u(z)-\nabla u(z')| +|v(z)-v(z')| \\
  +|\nabla v(z)-\nabla v(z')| <\min\{\frac{\delta^j_{l}}{2},\frac{\epsilon}{20},\theta\} \end{array}
\end{equation}
whenever $z,z'\in \bar U^j_l$ and  $|z-z'|\le \nu^j_l.$ We now cover $U^j_l$ (up to measure zero) by a sequence of disjoint boxes $\{Q^j_{l,i}\times I^j_{l,i}\}_{i=1}^\infty$ in $U^j_l$ with center $z^j_{l,i}$ and diameter $d^j_{l,i}<\nu^j_l.$

5. Fix an $i\in\mathbb{N}$, and set  $w=(u,v)$ and  $\xi=\begin{pmatrix} p&c\\B & \beta\end{pmatrix}=\nabla w(z^j_{l,i})=\begin{pmatrix} Du(z^j_{l,i}) & u_t(z^j_{l,i})\\Dv(z^j_{l,i}) & v_t(z^j_{l,i})\end{pmatrix}.$   By the choice of $\delta^j_l>0$ in Step 4 via Theorem \ref{main-lemma}, since $Q^j_{l,i}\times I^j_{l,i}\subset\tilde Q\times\tilde I$ and $(p,\beta)\in\mathcal K^j_l$,  for all sufficiently small $\rho>0$, there exists a function $\omega^j_{l,i}=(\varphi^j_{l,i},\psi^j_{l,i})\in C^\infty_c(Q^j_{l,i}\times I^j_{l,i};\R^{1+n})$ satisfying

(a) \; $\dv \psi^j_{l,i} =0$  in $Q^j_{l,i}\times I^j_{l,i}$,

(b) \; $(p'+D\varphi^j_{l,i}(z), \beta'+ (\psi^j_{l,i})_t(z))\in \mathcal{S}^j_l$ for all $z\in Q^j_{l,i}\times I^j_{l,i}$

\hspace{8mm} and all $|(p',\beta')-(p,\beta)|\leq\delta^j_l,$

(c) \; $\|\omega^j_{l,i}\|_{L^\infty(Q^j_{l,i}\times I^j_{l,i})}<\rho,$

(d) \; $\int_{Q^j_{l,i}\times I^j_{l,i}} |\beta+(\psi^j_{l,i})_t(z)-A(p+D\varphi^j_{l,i}(z))|dz<\epsilon' |Q^j_{l,i}\times I^j_{l,i}|/|\tilde Q\times \tilde I|,$

(e) \; $\int_{Q^j_{l,i}\times I^j_{l,i}} \mathrm{dist}((p+D\varphi^j_{l,i}(z),\beta+(\psi^j_{l,i})_t(z)),\mathcal A^j_l)dz  <\epsilon' |Q^j_{l,i}\times I^j_{l,i}|/|\tilde Q\times \tilde I|,$

(f) \; $\int_{Q^j_{l,i}} \varphi^j_{l,i}(x,t)dx=0$  for all $t\in I^j_{l,i},$

(g) \; $\|(\varphi^j_{l,i})_t\|_{L^\infty(Q^j_{l,i}\times I^j_{l,i})}<\rho,$\\
where the set $\mathcal A^j_l=\mathcal A_{r_{k^j_l}-\bar\mu_{r_{k^j_l}},r_{k^j_l}+\bar\mu_{r_{k^j_l}}}\subset \R^{n+n}$ is as in Theorem \ref{main-lemma}.
Here, we also let $0<\rho\leq\min\{\bar\tau_0,\frac{\delta^j_l}{2C},\frac{\epsilon}{20C},\eta\}$, where $C_n>0$ is the constant in Theorem \ref{div-inv} and $C$ is the product of $C_n$ and the sum of the lengths of all sides of $\tilde Q$.
From $\varphi^j_{l,i}|_{\partial(Q^j_{l,i}\times I^j_{l,i})}\equiv 0$ and (f), we can apply Theorem \ref{div-inv} to $\varphi^j_{l,i}$ on $Q^j_{l,i}\times I^j_{l,i}$ to obtain a function $g^j_{l,i}=\mathcal R\varphi^j_{l,i}\in C^1( \overline{Q^j_{l,i}\times I^j_{l,i}};\R^n)\cap W^{1,\infty}_0( Q^j_{l,i}\times I^j_{l,i};\R^n)$
such that $\dv g^j_{l,i}=\varphi^j_{l,i}$ in $Q^j_{l,i}\times I^j_{l,i}$ and
\begin{equation}\label{density-6}
\|(g^j_{l,i})_t\|_{L^\infty(Q^j_{l,i}\times I^j_{l,i})}\leq C\|(\varphi^j_{l,i})_t\|_{L^\infty(Q^j_{l,i}\times I^j_{l,i})}\leq\frac{\delta^j_l}{2}.\;\;\mbox{(by (g))}
\end{equation}

6. As $|v_t-A(Du)|,\,\mathrm{dist}((Du,v_t),\mathcal B)\in L^\infty(\Omega_T)$, we can select a finite index set $\mathcal I\subset \{(j,l)\,|\,j\in\N_0,\,1\le l\le n_j\}\times\N=:\mathcal{J}$ such that
\begin{equation}\label{density-7}
\left\{
\begin{split}
& \int_{\bigcup_{(j,l,i)\in \mathcal{J}\setminus\mathcal{I}}Q^j_{l,i}\times I^j_{l,i}}|v_t(z)-A(Du(z))|dz\le\frac{\epsilon}{5}|\Omega_T^1|,\\
& \int_{\bigcup_{(j,l,i)\in \mathcal{J}\setminus\mathcal{I}}Q^j_{l,i}\times I^j_{l,i}} \mathrm{dist}((Du(z),v_t(z)),\mathcal B) dz\le\frac{\epsilon}{5}|\Omega_T^1|.
\end{split}\right.
\end{equation}
We finally define
\[
(\tilde u,\tilde v)=(u,v)+\sum_{(j,l,i)\in\mathcal I}\chi_{Q^j_{l,i}\times I^j_{l,i}}(\varphi^j_{l,i},\psi^j_{l,i}+g^j_{l,i})\;\;\textrm{ in $\Omega_T$.}
\]

7. Let us finally check that $\tilde u$  together with  $\tilde v$ indeed gives the desired result. By construction, it is clear that $\tilde G\subset\subset\Omega_T^1$, $|\partial\tilde G|=0$, $\tilde u=u=u^*$ and $\tilde v=v=v^*$ in $\Omega_T\setminus\tilde G$, $\tilde u\in C^1_{piece}(\tilde G)\cap W^{1,\infty}_{u^*}(\Omega_T)$, and $\tilde v\in C^1_{piece}(\tilde G;\R^n)\cap W^{1,\infty}_{v^*}(\Omega_T;\R^n)$.
By the choice of $\rho$ in (g) as $\rho\le\bar\tau_0$, we have $\|\tilde u_t\|_{L^\infty(\Omega_T)}<m.$
Next, let $(j,l,i)\in\mathcal I,$ and observe that for $z\in Q^j_{l,i}\times I^j_{l,i}$, with $(p,\beta)=(Du(z^j_{l,i}),v_t(z^j_{l,i}))\in\mathcal G^j_{l,\tau_j}$, since $|z-z^j_{l,i}|<d^j_{l,i}<\nu^j_i$, it follows from (\ref{density-5}) and (\ref{density-6}) that
\[
|(Du(z),v_t(z)+(g^j_{l,i})_t(z))-(p,\beta)|\leq\delta^j_l,
\]
and so $(D\tilde u(z),\tilde v_t(z))\in\mathcal S^j_l\subset\mathcal S_{dc}$ from (b) above. From (a) and $\dv g^j_{l,i}=\varphi^j_{l,i}$, for $z\in Q^j_{l,i}\times I^j_{l,i}$,
\[
\dv \tilde v(z)=\dv (v+\psi^j_{l,i}+g^j_{l,i})(z)=u(z)+0+\varphi^j_{l,i}(z)=\tilde u(z).
\]
Therefore, $\tilde u\in\mathcal U$.
Next, observe
\[
\int_{\Omega_T}|\tilde v_t-A(D\tilde u)|dz = \int_{\Omega_T\setminus {\tilde G}}| v_t^*-A(Du^*)|dz+\int_{\tilde G}|\tilde v_t-A(D\tilde u)|dz
\]
\[
= \int_{\Omega_T^0\cup\Omega_T^2\cup\Omega_T^3}| v_t^*-\tilde A(Du^*)|dz+\int_{\Omega_T^1\setminus {\tilde G}}| v_t^*-A(Du^*)|dz+ \int_{\tilde G}|\tilde v_t-A(D\tilde u)|dz
\]
\[
\begin{split}
\le & \int_{\Omega_T^1\setminus {\tilde G}}| v_t-A(Du)|dz+\sum_{j=0}^\infty\int_{G_j'\setminus G_j''}|v_t-A(Du)|dz  \\
& +\sum_{j=0}^\infty\sum_{l=1}^{n_j}\int_{F^j_l}|v_t-A(Du)|dz +\sum_{(j,l,i)\in\mathcal J\setminus\mathcal I}\int_{Q^j_{l,i}\times I^j_{l,i}}|v_t-A(D u)|dz \\
& +\sum_{(j,l,i)\in \mathcal I}\int_{Q^j_{l,i}\times I^j_{l,i}}|\tilde v_t-A(D\tilde u)|dz \\
=: & I^1_1+I^1_2+I^1_3+I^1_4+I^1_5,
\end{split}
\]
\[
\int_{\Omega_T}\mathrm{dist}((D\tilde u,\tilde v_t),\mathcal B)dz = \int_{\Omega_T\setminus {\tilde G}}\mathrm{dist}((Du^*, v_t^*),\mathcal B)dz+\int_{\tilde G}\mathrm{dist}((D\tilde u,\tilde v_t),\mathcal B)dz
\]
\[
\begin{split}
\le & \int_{\Omega_T^1\setminus {\tilde G}}\mathrm{dist}((D u, v_t),\mathcal B)dz+\sum_{j=0}^\infty\int_{G_j'\setminus G_j''}\mathrm{dist}((D u, v_t),\mathcal B)dz  \\
& +\sum_{j=0}^\infty\sum_{l=1}^{n_j}\int_{F^j_l}\mathrm{dist}((D u, v_t),\mathcal B)dz +\sum_{(j,l,i)\in\mathcal J\setminus\mathcal I}\int_{Q^j_{l,i}\times I^j_{l,i}}\mathrm{dist}((D u, v_t),\mathcal B)dz \\
& +\sum_{(j,l,i)\in \mathcal I}\int_{Q^j_{l,i}\times I^j_{l,i}}\mathrm{dist}((D\tilde u,\tilde v_t),\mathcal B)dz \\
=: & I^2_1+I^2_2+I^2_3+I^2_4+I^2_5.
\end{split}
\]
From (\ref{density-1}), (\ref{density-2}), (\ref{density-3}) and (\ref{density-7}), we have $I^k_1+I^k_2+I^k_3+I^k_4\leq\frac{4\epsilon}{5}|\Omega_T^1|\;(k=1,2)$. Note   that for $(j,l,i)\in\mathcal I$ and $z\in Q^j_{l,i}\times I^j_{l,i},$ from (\ref{density-1}), (\ref{density-2}) and (g),
\[
|\tilde v_t(z)-A(D\tilde u(z))|=|v_t(z)+(\psi^j_{l,i})_t(z)+(g^j_{l,i})_t(z)-A(Du(z)+D\varphi^j_{l,i}(z))|
\]
\[
\leq |v_t(z)-v_t(z^j_{l,i})|+|v_t(z^j_{l,i})+(\psi^j_{l,i})_t(z)-A(Du(z^j_{l,i})+D\varphi^j_{l,i}(z))|
\]
\[
+|(g^j_{l,i})_t(z)|+|A(Du(z^j_{l,i})+D\varphi^j_{l,i}(z))-A(Du(z)+D\varphi^j_{l,i}(z))|
\]
\[
\le\frac{\epsilon}{10}+|v_t(z^j_{l,i})+(\psi^j_{l,i})_t(z)-A(Du(z^j_{l,i})+D\varphi^j_{l,i}(z))|
\]
\[
+|A(Du(z^j_{l,i})+D\varphi^j_{l,i}(z))-A(Du(z)+D\varphi^j_{l,i}(z))|.
\]
Similarly, since $\mathcal A^j_l\subset\mathcal B$, we have
\[
\begin{split}
\mathrm{dist}( & (D\tilde u(z),\tilde v_t(z)),\mathcal B)\\
& \le \frac{\epsilon}{10}+\mathrm{dist}((Du(z^j_{l,i})+D\varphi^j_{l,i}(z),v_t(z^j_{l,i})+(\psi^j_{l,i})_t(z)),\mathcal B)\\
& \le \frac{\epsilon}{10}+\mathrm{dist}((Du(z^j_{l,i})+D\varphi^j_{l,i}(z),v_t(z^j_{l,i})+(\psi^j_{l,i})_t(z)),\mathcal A^j_l).
\end{split}
\]
From (b) and (i) of Theorem \ref{thm:main2}, we have
$|Du(z^j_{l,i})+D\varphi^j_{l,i}(z)|\le s_+(r_{k^j_l}+\bar\mu_{r_{k^j_l}})<s_+$.
As $(D\tilde u(z),\tilde v_t(z))\in\mathcal S^j_l$, we also have $|Du(z)+D\varphi^j_{l,i}(z)|=|D\tilde u(z)|< s_+$, and by (\ref{density-5}), $|Du(z^i_j)-Du(z)|<\theta$. From (\ref{density-4}), we thus have
\[
|A(Du(z^j_{l,i})+D\varphi^j_{l,i}(z))-A(Du(z)+D\varphi^j_{l,i}(z))|<\frac{\epsilon}{20}.
\]
Integrating the two inequalities above over $Q^i_j\times I^i_j,$ we now obtain from (d) and (e), respectively, that
\[
\int_{Q^j_{l,i}\times I^j_{l,i}}|\tilde v_t(z)-A(D\tilde u(z))|dz\le\frac{3\epsilon}{20}|Q^j_{l,i}\times I^j_{l,i}|+\frac{\epsilon|\Omega_T|}{20}\frac{|Q^j_{l,i}\times I^j_{l,i}|}{|\tilde Q\times \tilde I|}\leq\frac{\epsilon}{5}|Q^j_{l,i}\times I^j_{l,i}|,
\]
\[
\int_{Q^j_{l,i}\times I^j_{l,i}} \mathrm{dist}( (D\tilde u(z),\tilde v_t(z)),\mathcal B) dz\le\frac{\epsilon}{10}|Q^j_{l,i}\times I^j_{l,i}|+\frac{\epsilon|\Omega_T|}{20}\frac{|Q^j_{l,i}\times I^j_{l,i}|}{|\tilde Q\times \tilde I|}\leq\frac{\epsilon}{5}|Q^j_{l,i}\times I^j_{l,i}|;
\]
thus $I^k_5\leq\frac{\epsilon}{5}|\Omega_T^1|$, and so $I^k_1+I^k_2+I^k_3+I^k_4+I^k_5\le\epsilon|\Omega_T^1|$, where $k=1,2$. Therefore, $\tilde u\in\mathcal U_\epsilon.$
Lastly, from (c) with $\rho\le \eta$ and the definition of $\tilde u$, we have $\|\tilde u-u\|_{L^\infty(\Omega_T)}<\eta$.

The proof is now complete.
\end{proof}

\subsection{Completion of the proof of Theorem \ref{thm:NF-1}}

Unless specifically distinguished, the proof below is common for both \textbf{Type I: FFT} solutions and \textbf{Type II: BFT} solutions.

\begin{proof}[Proof of Theorem \ref{thm:NF-1}]
We return to Section \ref{sec:add-set}. As outlined in Remark \ref{rmk-1}, Theorem \ref{thm-density-1}  and Theorem \ref{thm:main} together give infinitely many Lipschitz solutions $u$ to problem (\ref{ib-P}).

We now follow the proof of Theorem \ref{thm:main} for detailed information on such a Lipschitz solution $u\in\mathcal G$ to (\ref{ib-P}). Here $Du$ is the a.e.-pointwise limit of some sequence $Du_j$, where the sequence $u_j\in\mathcal U_{1/j}$ converges to $u$ in $L^\infty(\Omega_T)$. Since $u_j\equiv u^*$ in $\Omega_T\setminus G_j$ for some open set $G_j\subset\subset\Omega_T^1$, we also have $u\equiv u^*\in C^{2+\alpha,1+\alpha/2}(\bar\Omega_T^{\tilde r})$  so that
\[
u_t=\dv(A(Du))\;\;\mbox{and}\;\; |Du|>s_+(\tilde r)\;\;\mbox{in}\;\;\Omega_T^{\tilde r}=\Omega_T^3
\]
and
\begin{equation}\label{NF-1-1}
|Du|=0\;\;\mbox{a.e. in}\;\;\Omega_T^0,\;\;\mbox{and}\;\;|Du|=s_+(\tilde r)\;\;\mbox{a.e. in}\;\;\Omega_T^2.
\end{equation}
Note $(v_j)_t \wcon v_t$\, in\, $L^2(\Omega_T;\R^n)$, where $v_j$ is the corresponding vector function to $u_j$ and $v\in W^{1,2}((0,T);L^2(\Omega;\R^n))$. From (\ref{div-v3}), we can even deduce that  $(v_j)_t \to v_t$\, pointwise a.e. in $\Omega_T$. On the other hand, from the definition of $\mathcal U_{1/j}$,
\[
\int_{\Omega_T^1} \mathrm{dist}((Du_j,(v_j)_t),\mathcal B)\, dxdt\le\frac{1}{j}|\Omega_T^1|\to 0\;\;\mbox{as}\;\; j\to \infty;
\]
thus $(Du,v_t)\in\mathcal B$\, a.e. in $\Omega_T^1$, yielding together with (\ref{NF-1-1}) that
\[
|Du|\in[s_-^2(\tilde r),s_+(\tilde r)]\cup\{0\}\;\;\mbox{in}\;\;\Omega_T\setminus\Omega_T^{\tilde r},\quad (\textbf{Type I})
\]
\[
|Du|\in[0,s_-^1(\tilde r)]\cup[s_0,s_+(\tilde r)]\;\;\mbox{in}\;\;\Omega_T\setminus\Omega_T^{\tilde r},\quad (\textbf{Type II})
\]

The proof is now complete.
\end{proof}

\subsection{Proof of Theorem \ref{thm:NF-2}}
Let $u_0\in C^{2+\alpha}(\bar\Omega)$ with $Du_0\cdot\n|_{\partial\Omega}=0$.
If $\|Du_0\|_{L^\infty(\Omega)}=0$, that is, $u_0\equiv c$ in $\Omega$ for some constant $c\in\R$, the constant function $u\equiv c$ in $\Omega_T$ is a Lipschitz solution to problem (\ref{ib-P}).
The existence of infinitely many Lipschitz solutions to (\ref{ib-P}) when $|Du_0(x_0)|\in (0,s_+)$ for some $x_0\in\Omega$ is simply the result of Theorem \ref{thm:NF-1}. So we cover the remaining case here.

Assume $\min_{\bar\Omega}|Du_0|\ge s_+$. Fix any number $0<r=\tilde r<\sigma(s_+)$, and let $\tilde\sigma,\,\tilde f\in C^{1+\alpha}([0,\infty))$ be some functions from Lemma \ref{lem:modi-NF}. Using the flux $\tilde A(p)=\tilde f(|p|^2)p$, Theorem \ref{existence-gr-max} gives a unique solution $u^*\in C^{2+\alpha,1+\alpha/2}(\bar\Omega_T)$ to problem (\ref{ib-parabolic}). If $|Du^*|$ stays on or above the threshold $s_+$ in $\Omega_T$, then $u^*$ itself is a Lipschitz solution to (\ref{ib-P}). Otherwise, set $\bar s=\frac{s_0+s_+}{2}$ and choose a point $(\bar x,\bar t)\in\Omega_T$ such that
\[
|Du^*|\ge\bar s\;\;\mbox{in}\;\;\Omega\times(0,\bar t),\quad |Du^*(\bar x,\bar t)|\in (0,s_+).
\]
Regarding $u_1(\cdot):=u^*(\cdot,\bar t)\in C^{2+\alpha}(\bar\Omega)$, satisfying $Du_1\cdot\n|_{\partial\Omega}=0$, as a new initial datum at time $t=\bar t$, it follows from Theorem \ref{thm:NF-1} that problem (\ref{ib-P}), with the initial datum $u_1$ at  time $t=\bar t$, admits infinitely many Lipschitz solutions $\bar u$ in $\Omega\times(\bar t,T)$. Then the patched functions $u=\chi_{\Omega\times (0,\bar t)} u^*+\chi_{\Omega\times [\bar t,T)} \bar u$ in $\Omega_T$ become Lipschitz solutions to the original problem (\ref{ib-P}), and the proof is complete.

\section{Further remarks}\label{sec:remarks}

In this final section, we briefly give an overview of how one can combine \cite{KY2} with this paper to deduce further existence results. Instead of trying to formulate a certain general result, we present some case-by-case existence results in a casual manner. Also, even if one can possibly study the Dirichlet- and mixed-boundary value problems of forward-backward parabolic equations by using the methods of the two papers, we focus only on the Neumann problem (\ref{ib-P}) with mixed type (i.e., mixture of Perona-Malik, H\"ollig and non-Fourier types)  profiles $\sigma=\sigma(s):[0,\infty)\to\R$ for diffusion fluxes $A(p)=\frac{\sigma(|p|)}{|p|}p.$  Throughout this section, we assume that the domain  $\Omega$ and initial datum $u_0$ satisfy condition (\ref{assume-1}). In addition, all the profiles $\sigma(s)$ considered here are assumed to have derivative values lying in some interval $[\lambda,\Lambda]$ $(\Lambda\ge\lambda>0)$ for all sufficiently large $s>0$.

As an example, consider the profile $\sigma(s)$ given by the following graph.

\begin{figure}[ht]
\begin{center}
\begin{tikzpicture}[scale =1]
    \draw[->] (-.5,0) -- (11.3,0);
	\draw[->] (0,-.5) -- (0,5.2);
 \draw[dashed] (0.2,0.5)--(3,0.5);
 \draw[dashed] (0.9,1.53)--(4.45,1.53);
 \draw[dashed] (5.18,2.48)--(7.6,2.48);
 \draw[dashed] (6,3.5)--(9.7,3.5);
   \draw[dashed] (0.2,0.5)--(0.2,0);
   \draw[dashed] (4.45, 1.53)--(4.45,0) ;
   \draw[dashed] (5.18,2.48)--(5.18,0) ;
   \draw[dashed] (9.7,3.5)  --  (9.7,0) ;
     \draw[thick] (0.2,0)--(4.45,0);
     \draw[thick] (5.18,0)--(9.7,0);
	\draw[thick]   (0, 0) .. controls (1,3) and  (1.2, 0.7)   ..(3,0.5);
	\draw[thick]   (3, 0.5) .. controls  (4, 0.6) and (5,2.5) ..(6, 3.5 );
    \draw[thick]   (6, 3.5) .. controls  (7, 2.8) and (8,1) ..(11, 5 );
	\draw (11.3,0) node[below] {$s$};
    \draw (-0.3,0) node[below] {{$0$}};
    \draw (10.5, 3.5) node[above] {$\sigma(s)$};
   \draw (0.2, 0) node[below] {$s_1$};
   \draw (4.45, 0) node[below] {$s_2$};
   \draw (5.18, 0) node[below] {$s_3$};
   \draw (9.7, 0) node[below] {$s_4$};
    \end{tikzpicture}
\end{center}
\caption{The first example of profile  $\sigma(s).$}
\label{fig5}
\end{figure}
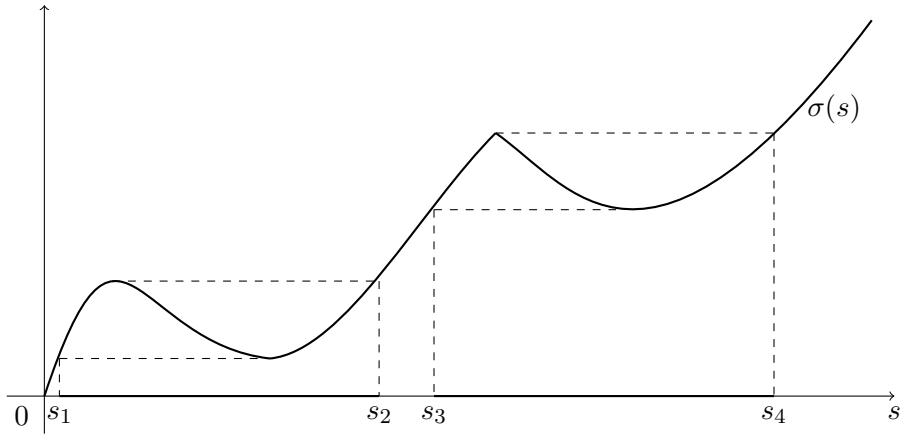

For such a profile $\sigma(s)$, if the initial gradient size $|Du_0|$ belongs to the \emph{phase transition zones} $(s_1,s_2)$ and $(s_3,s_4)$ at some points in $\Omega$, one can employ the method of \cite{KY2} to generate solutions. In doing so, one may choose a modified profile $\tilde \sigma(s)$ that is equal to the original
$\sigma(s)$ outside $(s_1,s_2)\cup(s_3,s_4)$ and whose derivative values always belong to some interval $[\theta,\Theta]$ $(\Theta\ge\theta>0)$. Then  obtained solutions will  be smooth evolutions in the subdomain of $\Omega_T$ in which a certain classical solution $u^*$ corresponding to the profile $\tilde\sigma(s)$ with initial datum $u_0$ has  gradient size lying outside $(s_1,s_2)\cup(s_3,s_4)$. In the subdomain of $\Omega_T$ in which $|Du^*|$ lies in two fixed disjoint open intervals in $(s_1,s_2)\cup(s_3,s_4)$, where we do \emph{laminate} (i.e., convexify in the rank-one sense) certain matrix sets to capture some open structures that enable us to do a \emph{surgery}, such solutions will be highly oscillatory as those should have gradient size that  belongs only to four disjoint intervals. The only difference from \cite{KY2} in getting solutions is that we formulate two matrix sets to be laminated instead of one. Obviously, finitely many ups and downs of the graph of a profile $\sigma(s)$ can be also dealt in the same way for existence as long as the graph is  increasing like a staircase as in Figure \ref{fig5}. However, profiles of non-staircase shape are still possible to handle  as we explain below.

We next consider the profile $\sigma(s)=\sin s$ whose graph is given below. Assume in this case that the space domain $\Omega$ is convex to guarantee the gradient maximum principle for uniformly parabolic diffusions \cite[Theorem 2.1]{KY1}.

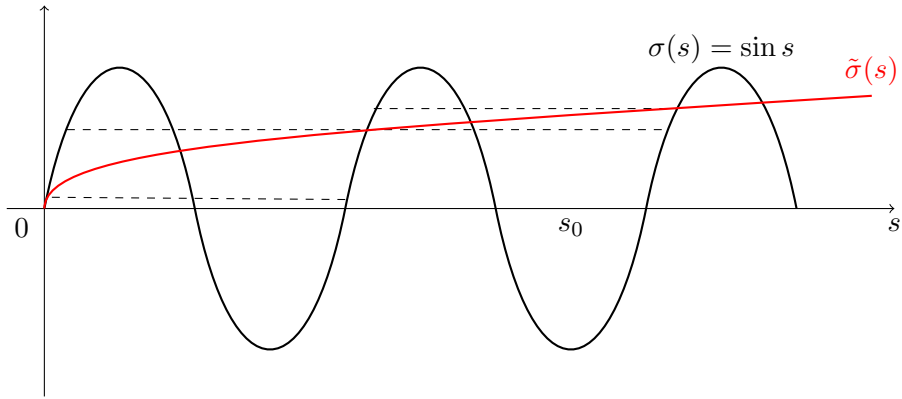
\begin{figure}[ht]
\begin{center}
\begin{tikzpicture}[scale =1]
    \draw[->] (-.5,2.5) -- (11.3,2.5);
	\draw[->] (0,0) -- (0,5.2);
 \draw[dashed] (0.1,2.65)--(4,2.62);
 \draw[dashed] (0.3,3.55)--(8.31,3.55);
 \draw[dashed] (4.38,3.83)--(8.4,3.83);
	\draw[thick]   (0, 2.5) .. controls (0.5,5) and  (1.5, 5)   ..(2,2.5);
	\draw[thick]   (2, 2.5) .. controls (2.5,0) and  (3.5, 0)   ..(4,2.5);
    \draw[thick]   (4, 2.5) .. controls (4.5,5) and  (5.5, 5)   ..(6,2.5);
	\draw[thick]   (6, 2.5) .. controls (6.5,0) and  (7.5, 0)   ..(8,2.5);
    \draw[thick]   (8, 2.5) .. controls (8.5,5) and  (9.5, 5)   ..(10,2.5);
     \draw[thick,red]   (0,2.5) .. controls  (0.01,3.3) and (3,3.5) ..(11,4);
	\draw (11.3,2.5) node[below] {$s$};
    \draw (7,2.5) node[below] {$s_0$};
    \draw (-0.3,2.5) node[below] {{$0$}};
    \draw (9, 4.3) node[above] {$\sigma(s)=\sin s$};
    \draw (11, 4) node[above] {\textcolor{red}{$\tilde \sigma(s)$}};
    \end{tikzpicture}
\end{center}
\caption{The second example of profile  $\sigma(s).$}
\label{fig6}
\end{figure}

For instance, suppose $3\pi<s_0:=\sup_{\Omega}|Du_0|<4\pi$; then select a suitable modified profile $\tilde\sigma(s)$ as in Figure \ref{fig6}. Following \cite{KY2}, we can perform (simultaneous) lamination for two matrix sets to obtain solutions whose gradient size belongs to several disjoint intervals. Here, the gradient maximum principle for the classical solution $u^*$ corresponding to $\tilde \sigma(s)$ with initial datum $u_0$ is required to construct a admissible class containing a pair of functions with $u^*$ as its first component. If $|Du^*|$ were escaping beyond $s_0$, we would not be able to define the admissible class to problem (\ref{ib-P}) for any given $T>0$. In case of profile $\sigma(s)=-\sin s$, non-Fourier type diffusion may arise in the small gradient regime where $|Du^*|<3\pi/2$; thus the method of this paper can be combined with that of \cite{KY2} to obtain solutions for such a profile $\sigma(s)$.

Lastly, consider the profile $\sigma(s)$ having the graph as below. In this case, non-Fourier type diffusion may occur in the interval $(0,s_+)$, and H\"ollig type phase transitions arise in the interval $(s_1,s_2)$. Clearly, the former type can be handled by the technique of this paper, and the latter by that of \cite{KY2}. One essential difference between these two techniques is that although the one in the current paper stems from the other in \cite{KY2}, it makes use of countably many partial rank-one structures of the related matrix set to avoid a certain \emph{degeneracy} which occurs at the points $x\in\Omega$ for which $\sigma(|Du_0(x)|)=0$ when $t=0$. (Such a degeneracy does not appear in space dimension $n=1$.) Accordingly, compared to a single partial rank-one structure used in \cite{KY2}, more sophisticated  scheme has been used to detect partial rank-one structures and to combine those in this paper. 

\begin{figure}[ht]
\begin{center}
\begin{tikzpicture}[scale =1]
	\draw[->] (0,0.5) -- (0,6.5);
    \draw[->] (-.5,2) -- (9,2);
  \draw[dashed] (0,0.95)--(1.65,0.95);
  \draw[dashed] (0,3)--(4.05,3);
  \draw[dashed] (4.65,3.85)--(6.9,3.85);
  \draw[dashed] (5.5,4.98)--(7.8,4.98);
    \draw[dashed] (1.65, 0.95)  --  (1.65, 2) ;
    \draw[dashed] (4.05, 3)  --  (4.05, 2) ;
    \draw[dashed] (4.65, 3.85)  --  (4.65, 2) ;
    \draw[dashed] (7.8, 4.98)  --  (7.8, 2) ;
      \draw[thick] (0,2) -- (4.05,2);
      \draw[thick] (4.65,2) -- (7.8,2);
	\draw[thick]   (0, 2) .. controls (1.5,0) and  (3, 1)   ..(5,4.5);
	\draw[thick]   (5, 4.5) .. controls  (6, 6.5) and (7,1) ..(8, 6 );
	\draw (9,2) node[below] {$s$};
    \draw (-0.3,2) node[below] {{$0$}};
    \draw (8.5, 5) node[above] {$\sigma(s)$};
 \draw (0, 3) node[left] {$\sigma(s_+)$};
 \draw (0, 0.95) node[left] {$\sigma(s_-)=-\sigma(s_+)$};
   \draw (1.65, 2) node[above] {$s_-$};
   \draw (4.05, 2) node[below] {$s_+$};
   \draw (4.65, 2) node[below] {$s_1$};
   \draw (7.8, 2) node[below] {$s_2$};
    \end{tikzpicture}
\end{center}
\caption{The third example of profile  $\sigma(s).$}
\label{fig7}
\end{figure}
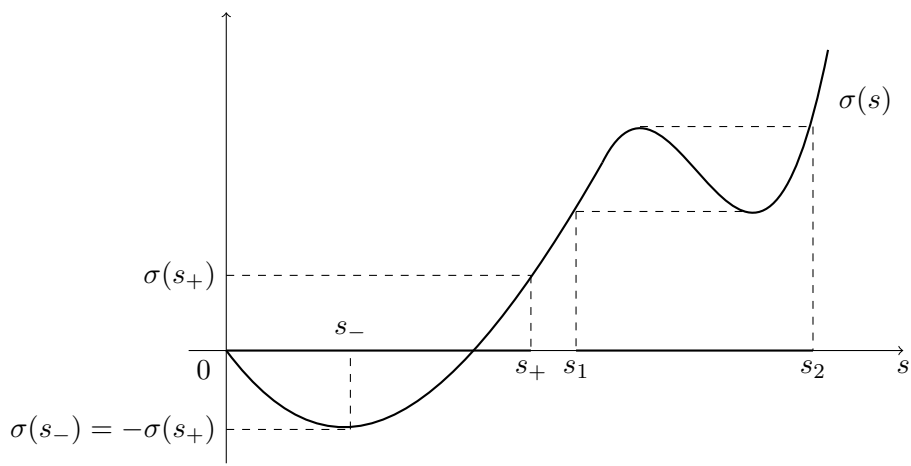

In concluding this paper, we remark that for many other possible profiles $\sigma(s)$ that are a combination of Perona-Malik, H\"ollig and non-Fourier types, one can expect the existence of solutions in \emph{almost} all cases. 




\end{document}